\documentclass[11pt]{article}

\usepackage{amssymb}   
\usepackage{amsthm}    
\usepackage{amsmath}   
\usepackage{stmaryrd}  
\usepackage{titletoc}  
\usepackage{mathrsfs}  
\usepackage{graphicx}

\vfuzz2pt 
\hfuzz2pt 

\newlength{\defbaselineskip}
\setlength{\defbaselineskip}{\baselineskip}
\newcommand{\setlinespacing}[1]%
           {\setlength{\baselineskip}{#1 \defbaselineskip}}

\theoremstyle{plain}
\newtheorem{thm}{Theorem}[section]

\newtheorem{lem}[thm]{Lemma}
\newtheorem{prop}[thm]{Proposition}

\theoremstyle{definition}
\newtheorem{defn}{Definition}[section]

\newtheorem{rmk}{Remark}[section]

\newcommand{\eps}{\varepsilon}

\DeclareMathOperator*{\esssup}{ess\,sup}

\newcommand{\cL}{\mathcal{L}}

\newcommand{\cB}{\mathcal{B}}

\newcommand{\cO}{\mathcal{O}}
\newcommand{\cA}{\mathcal{A}}

\newcommand{\bP}{\mathbb{P}}
\newcommand{\bR}{\mathbb{R}}
\newcommand{\bN}{\mathbb{N}}

\newcommand{\sF}{\mathscr{F}}

\newcommand{\la}{\langle}
\newcommand{\ra}{\rangle}

\newcommand{\tbf}{\textbf}

\textwidth =16cm \topmargin =-18mm \textheight =23.5cm \oddsidemargin=5pt
\evensidemargin=0pt

\makeatletter\@addtoreset{equation}{section} \makeatother
\begin{document}

\title{On Backward Doubly Stochastic Differential  Evolutionary System}
\author{Jinniao Qiu \footnotemark[2]~~~~ and ~~~~
Shanjian Tang\footnotemark[2]~\footnotemark[3]}

\footnotetext[1]{Supported by NSFC Grant \#10325101, by Basic Research Program of China (973 Program)  Grant \# 2007CB814904, by the Science
Foundation of the Ministry of Education of China Grant \#200900071110001, and by WCU (World Class University) Program through the Korea
Science and Engineering Foundation funded by the Ministry of Education, Science and Technology (R31-2009-000-20007).}

\footnotetext[2]{Department of Finance and Control Sciences, School of Mathematical Sciences, Fudan University, Shanghai 200433, China.
\textit{E-mail}: \texttt{qiujinn@gmail.com} (Jinniao Qiu), \texttt{sjtang@fudan.edu.cn} (Shanjian Tang).}

\footnotetext[3]{Graduate Department of Financial Engineering, Ajou University, San 5, Woncheon-dong, Yeongtong-gu, Suwon, 443-749, Korea.}

\maketitle

\begin{abstract}
In this paper, we are concerned with  backward doubly stochastic differential evolutionary systems (BDSDESs for short). By using a variational approach based on the monotone operator theory, we prove the existence and uniqueness of the solutions for BDSDESs. We also establish an It\^o formula for the Banach space-valued BDSDESs.
\end{abstract}

AMS Subject Classification: 60H15; 35R60

Keywords: Gelfand triple, Monotone operator, Backward doubly stochastic evolutionary system, Backward doubly stochastic differential equation, Backward doubly stochastic partial differential equation

\section{Introduction}
 Starting from Bismut's pioneering work \cite{Bismut_78,Bismut_78_2} and
Pardoux and Peng's seminal work \cite{ParPeng_90}, the theory of backward
 stochastic differential equations (BSDEs) is rather complete (for instance, see \cite{Hu_2002,DelbaenTang_10,Karoui_Peng_Quenez}). As a natural generalization of BSDEs,
  backward stochastic partial differential equations (BSPDEs) arise
in many applications of probability theory and stochastic processes, for
instance in the optimal control of processes with incomplete information,
as an adjoint equation of the Duncan-Mortensen-Zakai filtration equation
(for instance, see \cite{Bensousan_83,DuQiuTang10,Hu_Ma_Yong02,Tang_98,Zhou_92}), and
naturally in the dynamic programming theory fully nonlinear BSPDEs as the
so-called  backward stochastic Hamilton-Jacobi-Bellman equations, are also
introduced in the study of  non-Markovian control problems (see Peng
\cite{Peng_92} and Englezos and Karatzas \cite{EnglezosKaratzas09}).

 In this work, we consider
the following backward doubly stochastic differential evolutionary system:
\begin{equation}\label{bdsdes}
  \left\{\begin{array}{l}
    \begin{split}
      -du(t)=F(t,u(t),v(t))\ dt+J(t,u(t),v(t))\ d\overleftarrow{B}_t-v(t)\ dW_t,~~~t\in[0,T],
    \end{split}\\
    \begin{split}
      u(T)=G,
    \end{split}
  \end{array}\right.
\end{equation}
which are first introduced by Pardoux and Peng \cite{PadouxPeng94BDSDE} as
backward doubly SDEs (BDSDEs, for short) to give a probabilistic representation for certain systems of quasilinear stochastic partial differential equations (SPDEs, for short). Similar arguments to Tang  \cite{TangBSPDEinOsaka2010} yield that this class of BDSDES includes SDEs, BSDEs, BDSDEs, SPDEs, backward stochastic partial differential equations (BSPDEs, for short) and backward doubly SPDEs (BDSPDEs, for short) as particular cases.

  On account of the connections between BDSDEs and SPDEs, many important results for SPDEs have been obtained: Buckdahn and Ma  \cite{BuckdahnMa2001,BuckdahnMa2002} established a stochastic viscosity solution theory for SPDEs;
 through investigations into a class of generalized BDSDEs, Boufoussi, Casteren and Mrhardy  \cite{Boufoussial2007} gave a probability representation for the stochastic viscosity solution of SPDEs with nonlinear Neumann boundary conditions; Zhang and Zhao  \cite{ZhangQiZhaoHD2007} used the extended Feymann-Kac formula to study the stationary solutions of SPDEs; Ichihara \cite{Ichihara04}  discussed the homogenization problem for SPDEs of Zakai type through the BDSDE theory; Matoussi and Stoica proved the existence and uniqueness result for the obstacle problem of quasilinear parabolic stochastic PDEs. Recently, Han, Peng and Wu  \cite{HanPengWu2010} established a Pontryagin type maximum principle for the optimal control problems with the state process driven by BDSDEs. It is worth noting that all the BDSDESs involved in the above results are finite dimensional. For the infinite dimensional case, BDSPDEs are first introduced and studied in Tang \cite{TangBSPDEinOsaka2010} by using the method of stochastic flows, while the generalized solution theory for BDSPDEs is  blank.

By using a variational approach based on the monotone operator theory, 
we investigate the BDSDESs and  prove the existence and uniqueness of the solutions for BDSDESs. The results seems to be new both for the finite dimensional case (BDSDEs) and the infinite dimensional case. Moreover, our results are also expected to extend the results of the previous paragraph to the infinite dimensional cases, i.e., we may extend Feymann-Kac formula and establish the stochastic viscosity theory for SPDEs on Hilbert spaces, construct stationary solutions for SPDEs on infinite dimensional spaces and investigate the optimal control problems with state processes driven by infinite dimensional BDSDESs. As an application to quasi-linear BDSPDEs, we get a more general existence and uniqueness result both for BDSPDEs and BSPDEs, which fills up the gap of the generalized solution theory for BDSPDEs. For the Banach space-valued BDSDESs, we also prove an It\^o formula, which plays an equally important role as that for SPDEs (for instance, see \cite{Krylov_Rozovskii81,PrevotRockner2007,RenRocknerWang2007}).

Our paper is organized as follows. In the next section, we set notations, hypotheses, and  the notion of the solution to BDSDES \eqref{bdsdes} and list the main theorem. In Section 3, we prepare several auxiliary results, including a generalized It\^o formula for the Banach space-valued BDSDESs, and a useful lemma on the weak convergence which is proved through a variational approach on basis of the monotone operator theory and will be used frequently in the following context. In section 4, by using the Galerkin approximation, we prove our main theorem first for the finite dimensional case and then the infinite dimensional case. In section 5, we apply our results to several examples. Section 6 is the appendix in which we prove our It\^o formula for the Banach space-valued BDSDESs.

\section{Preliminaries}
Let $V$ be a real reflexive and separable Banach space, and $H$ a real
separable Hilbert space. The norm in $V$ is denoted by $\|\cdot\|_V,$ and
the inner product and norm in $H$ is denoted by $\langle \cdot,\ \cdot\rangle $ and
$\|\cdot\|$ respectively. In this work we always assume that $V$ is dense,
and continuously imbedded in $H$. Thus, the dual space $H'$ is also
continuously imbedded in $V'$ which is the dual space of $V$. Simply, we denote the
above framework by
\begin{equation*}
  V\hookrightarrow H \cong H'\hookrightarrow V'.
\end{equation*}

We denote by $\|\cdot\|_*$ the norm in $V'$. The dual product between $V$
and $V'$ is denoted by $_{V'}\langle \cdot,\ \cdot \rangle _V$. Since it follows that
$$_{V'}\langle \varphi,\ \phi\rangle _V\ \,=\ \,\langle \varphi,\ \phi\rangle ,\ \varphi\in H,\phi\in V,$$
we shall still denote the dual product between $V$ and $V'$ by
$\langle \cdot,\ \cdot\rangle $ with a little notational confusion. $(V,H,V')$ is called a Gelfand triple.

 Fix  a finite time $T>0$. Let
$(\Omega,\sF,\bP)$ be a complete filtered probability space on which are
defined two mutually independent cylindrical  Wiener processes
$W=\{W_t:t\in[0,T]\}$ and $B=\{B_t: t\in[0,T] \}$ taking values on separable Hilbert
spaces $(U_1,\langle ,\ \rangle _{U_1},\|\cdot\|_{U_1})$ and $(U_2,\langle ,\ \rangle _{U_2},\|\cdot\|_{U_2})$ respectively. Denote by $(L(U_i,H),\langle,\ \rangle_i,\|\cdot\|_i)$ the separable  Hilbert
space of all Hilbert-Schmidt operators from $U_i$ to $H$, $i=1,2$.
 Denote by $\mathcal {N}$ the set of all the $\bP$-null sets in $\sF$.
For each $t\in [0,T]$, define
\begin{equation*}\label{filtration}
    \sF_t:=\sF_t^W\vee\sF_{t,T}^B
\end{equation*}
where for any process $\eta$, $\sF_{s,t}^{\eta}:=
\sigma\{\eta_r-\eta_s:s\leq r\leq t\}\vee\mathcal{N}$ and
$\sF_t^{\eta}:=\sF_{0,t}^{\eta}$. Note that as a collection of $\sigma$-algebras, $\{\sF_t,t\in[0,T]\}$ is not a filtration, since it is neither increasing nor decreasing. Consider
 BDSDES \eqref{bdsdes} and write it into the following integral form:
\begin{equation}\label{bdsdes-integral}
    \begin{split}
      u(t)=\ &G+\int_t^T F(s,u(s),v(s))\ ds+\int_t^T J(s,u(s),v(s))\ d\overleftarrow{B}_s \\
      &-\int_t^Tv(s)\ dW_s
      ,~t\in[0,T]
    \end{split}
\end{equation}
where for any $(t,\varphi,\phi)\in [0,T]\times V\times L(U_1,H),$
$$F(\cdot,t,\varphi,\phi):~\Omega\rightarrow V' \textrm{ and }
 J(\cdot,t,\varphi,\phi):~\Omega\rightarrow L(U_2,H)
 $$
are $\sF_t$-measurable.
Moreover, in \eqref{bdsdes} and \eqref{bdsdes-integral} the integral  with respect to $\{B_t\}$ is a $backward$ It\^o integral and the integral   with respect to $\{W_t\}$ is a standard It\^o
integral (c.f. \cite{NualaPardou98}).

For any $p,q\in [1,\infty] $ and any real separable Banach space $(U,\|\cdot\|_U)$, denote by
$M^{p,q}(0,T;U)$ the totality of $\varphi\in L^p(\Omega,\sF,L^q([0,T],\cB([0,T]),U))$
with
$$\|\varphi\|_{M^{p,q}(0,T;U)}:=\|\varphi\|_{L^p(\Omega,\sF,L^q([0,T],\cB([0,T]),U))}$$
such that $\varphi_t$ is
$\sF_t$-measurable, for a.e. $t\in [0,T]$.
For simplicity, set
$$M^p(0,T;U):=M^{p,p}(0,T;U).$$
 For $r\in [1,\infty)$ we
denote by $S^r(0,T;U)$ the totality of $\phi\in L^r(\Omega,\sF,C([0,T],U))$
such
that $\phi_t$ is $\sF_t$-measurable, for any
$t\in [0,T]$. Define
$$\|\phi\|_{S^r(0,T;U)}:=\left\{ E\left[ \sup_{t\in[0,T]}\|\phi(t)\|_U^r  \right] \right\}^{1/r},\ \phi\in S^r(0,T;U).
$$
  All the spaces defined above are complete.

 Moreover,
Letting $\tau$ ($0\leq \tau\leq T$) be a stopping time with respect to the backward filtration  $\{\sF_{t,T}^B,t\in [0,T]\}$, define
$$M^p(\tau,T;U):=\{1_{ [\tau,T]}u:\ u\in M^p(0,T;U)  \}$$  equipped with the norm
$\|u\|_{M^p(\tau,T;U)}=\|1_{ [\tau,T]}u\|_{M^p(0,T;U)}$
and in a similar way, we define $S^p(\tau,T;U)$ and $M^{p,2}(\tau,T;U)$.

 For simplicity, we always denote $M^p(\tau,T;\bR)$ ($S^p(\tau,T;\bR)$ and $M^{p,q}(\tau,T;\bR)$, respectively) by $M^p(\tau,T)$
($S^p(\tau,T)$ and $M^{p,q}(\tau,T)$, respectively).

By convention, we always treat elements of spaces like $ M^p(0,T;U)$
defined above
 as functions rather than distributions or classes of equivalent functions,
and if we know that a function of this class has a modification with better
properties, then we always consider this modification. For example, if
$u\in M^p(0,T; V)$ and $u$ has a modification lying in $S^q(0,T;H)$, we
always adopt the treatment $u\in M^p(0,T;V)\cap S^q(0,T;H)$.


 \medskip
Consider our BDSDES \eqref{bdsdes}. We define the following assumptions.

\medskip
There exist constants $1>\delta>0,\alpha>0,q>1,\alpha_1,K,K_1,\beta\geq 0$ and
    a nonnegative real-valued process
    $\varsigma\in M^1(0,T)$ such that the following conditions
    hold for all $v,v_1,v_2\in V,$ $\phi,\phi_1,\phi_2\in L(U_1,H)$ and
    $(\omega,t)\in \Omega\times [0,T]$.

\bigskip\medskip
   $({\mathcal A} 1)$ \it (Hemicontinuity)
   The map
    $s\mapsto \langle F(t,v_1+sv_2,\phi),\ v\rangle $ is continuous  on $\bR$.\rm

\medskip
   $({\mathcal A}2)$ \it  (Monotonicity)
\begin{equation*}
\begin{split}
    &2\langle F(t,v_1,\phi_1)-F(t,v_2,\phi_2),\ v_1-v_2\rangle +
    \|J(t,v_1,\phi_1)-J(t,v_2,\phi_2)\|_2^2\\
    &\leq
    K_1\|v_1-v_2\|^2+\delta\|\phi_1-\phi_2\|_1^2;
\end{split}
\end{equation*}\rm

\medskip
   $({\mathcal A} 3)$ \it (Coercivity)
    $$2\langle F(t,v,\phi),\ v\rangle +\|J(t,v,\phi)\|_2^2+\alpha\|v\|_V^q
    \leq
        \delta \|\phi\|_1^2+K\|v\|^2+\varsigma(t);$$\rm

\medskip
   $({\mathcal A} 4)$ \it (Growth)
  \begin{equation*}
    \begin{split}
        &\|F(t,v,\phi)\|_*^{q'}
        \leq
            \left[\varsigma(t)+K\left(\|v\|_V^q+\|v\|^2+\|\phi\|_1^2\right)\right]\left(1+\|v\|^{\beta}\right),\\
        &\|J(t,v,\phi)\|_2^2
            \leq K\left(\varsigma(t)+\|v\|^q_V+\|v\|^2+\|\phi\|_1^2\right),
            \quad \frac{1}{q'}+\frac{1}{q}=1;
    \end{split}
  \end{equation*}\rm

\medskip
   $({\mathcal A} 5)$ \it (Lipchitz Continuity)
    \begin{equation*}
        \begin{split}
        &\|F(t,v,\phi_1)-F(t,v,\phi_2)\|_*\leq K\|\phi_1-\phi_2\|_1,\\
        &\|J(t,v_1,\phi)-J(t,v_2,\phi)\|_2 \leq K\|v_1-v_2\|_V;
        \end{split}
    \end{equation*}\rm

\medskip
   $({\mathcal A} 6)$ \it
 $$J(t,v,\phi)J^*(t,v,\phi)\leq \phi\phi^*+K(\|J(t,0,0)\|_2^2+\|v\|^2)I+\alpha_1\|v\|_V^q\wedge\|v\|_V^2
 I$$
 where  $J^*$ ($\phi^*$, respectively)
 denotes the adjoint transformation of $J$ ($\phi$, respectively) and $I$ is the identity operator on $H$.\rm

\begin{rmk}
	Actually,  we can deduce from $(\mathcal{A}2)$ and $(\mathcal{A}3)$ that
  for all $\phi_1,\phi_2\in L(U_1,H)$, $v\in V$ and $(\omega,t)\in \Omega\times[0,T]$, there hold
  $$\max\{\|J(t,0,0)\|_2^2,\,\|F(t,0,0)\|_*^{q'}\}\leq \varsigma(t)\textrm{ and } \|J(t,v,\phi_1)-J(t,v,\phi_2)\|_2^2\leq \delta\|\phi_1-\phi_2\|_1^2.$$
\end{rmk}

\begin{rmk}
  In view of $(\mathcal{A}4)$ and $(\mathcal{A}6)$, we see that the function $\|J(t,\cdot,\phi)\|_*^q$ is defined on $V$ and dominated by the norm $\|\cdot\|_V^q$ in some sense. This property goes beyond the calculations of \cite{Krylov_Rozovskii81,LiuwRockner2010,Pardoux1979,PrevotRockner2007,RenRocknerWang2007,ZhangXich2009}. Moreover, if $J(t,v,\phi)J^*(t,v,\phi)$ does not depend on $\|\phi\|_1$ or $\|v\|_V$, the assumption $(\mathcal{A}6)$ is not necessary in our work.  In addition, as $J(t,v,\phi)$ is Lipchitz continuous with respect to $v$ on $V$, it seems not so strange that $J(t,v,\phi)J^*(t,v,\phi)$ is dominated by $\|v\|_V^2$.
\end{rmk}

\begin{defn}\label{definition solution}
  We say a pair of $V\times L(U_1,H)$-valued processes $(u,v)$
  is a solution of the backward doubly stochastic differential evolutionary system
  \eqref{bdsdes}
  if
  	$$(u,v)\in \left(M^{pq/2,q}(0,T;V)\cap S^p(0,T;H)\right)\times M^{p,2}(0,T;L(U_1,H)),\textrm{ for some }p\geq 2,q>1$$
  and \eqref{bdsdes} holds in the weak sense (called in the distributional sense as well), i.e.
   for any $\varphi\in V$ there holds almost surely
  \begin{equation}
    \begin{split}
      \langle \varphi,\ u(t)\rangle \ =\
      &\langle \varphi,\ G\rangle +\int_t^T\! \langle F(s,u(s),v(s),\ \varphi\rangle \ ds
                            -\int_t^T\langle \varphi,\ v(s)dW_s\rangle \\
      &+\int_t^T\langle \varphi,\ J(s,u(s),v(s))d\overleftarrow{B}_s\rangle
        ,\ \forall t\in [0,T].
    \end{split}
  \end{equation}
\end{defn}

Now we show our main result as the following theorem:
\begin{thm}\label{thm main}
  Suppose  assumptions $(\mathcal{A}1)$-$(\mathcal{A}5)$ hold. Let $0\leq \alpha_1(p-2)<\alpha$,
  and $\|F(\cdot,0,0)\|_*^{q'},\varsigma\in M^{ p/2,1}(0,T)$
  for some $p\geq \beta+2$. Moreover, if $p>2$, we assume $(\mathcal{A}6)$ holds.
  Then for any $G\in L^p(\Omega,\sF_T,H)$, BDSDES \eqref{bdsdes} admits a unique
  solution
$$(u,v)\in \left(M^{pq/2,q}(0,T;V)\cap S^p(0,T;H)\right)\times M^{p,2}(0,T;L(U_1,H))$$
such that
\begin{equation}\label{main thm estimate}
\begin{split}
    &\|u\|_{S^p(0,T;H)}+\|u\|^{q/2}_{M^{pq/2,q}(0,T;V)}+\|v\|_{M^{p,2}(0,T;L(U_1,H))}\\
    \leq\ & C \left\{  \|G\|_{L^p(\Omega,\sF_T,H)}+\|\varsigma\|^{1/2}_{M^{p/2,1}(0,T)}   \right\}
\end{split}
\end{equation}
where $C$ is a constant depending on $T,K,q,p,\delta,\beta,\alpha_1$ and
$\alpha$.
\end{thm}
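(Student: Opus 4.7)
The plan is to prove uniqueness by a direct monotonicity calculation and to construct the solution through a Galerkin approximation whose weak limit is then identified by a Minty--Browder style argument based on the monotone operator theory.

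For uniqueness, given two solutions $(u_1,v_1),(u_2,v_2)$, I would apply the Banach-space It\^o formula of Section 3 to $e^{-K_1 t}\|u_1(t)-u_2(t)\|^2$ and take expectations. The $W$-integral vanishes in expectation, while the backward $B$-integral generates an It\^o correction $\|J(t,u_1,v_1)-J(t,u_2,v_2)\|_2^2$ that, combined with the monotonicity $(\mathcal{A}2)$ and the strict inequality $\delta<1$, allows $\delta\|v_1-v_2\|_1^2$ to be absorbed into the bracket of $v_1-v_2$; Gronwall's inequality then forces $u_1\equiv u_2$ and $v_1\equiv v_2$.

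For existence, fix a complete orthonormal system $\{e_i\}_{i\geq 1}\subset V$ of $H$, set $V_n=\mathrm{span}\{e_1,\ldots,e_n\}$, and let $\Pi_n$ be the orthogonal projection onto $V_n$. I would solve the projected finite-dimensional BDSDES with coefficients $(\Pi_n F,\Pi_n J)$ and terminal condition $\Pi_n G$; on $V_n$ all norms are equivalent and the projected coefficients inherit $(\mathcal{A}1)$--$(\mathcal{A}6)$, so a Picard/fixed-point argument in the spirit of Pardoux--Peng \cite{PadouxPeng94BDSDE} extended to the monotone setting yields a unique solution $(u^n,v^n)\in S^p(0,T;V_n)\times M^{p,2}(0,T;L(U_1,V_n))$. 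Uniform $n$-independent estimates then come from the It\^o formula: for the base case $p=2$ I would apply it to $\|u^n(t)\|^2$ and use $(\mathcal{A}3)$ to bound $u^n$ in $M^{2,q}(0,T;V)\cap S^2(0,T;H)$ and $v^n$ in $M^2(0,T;L(U_1,H))$; for $p>2$ I would apply it to $\|u^n(t)\|^p$, which produces cross terms of the form $\|u^n\|^{p-2}\|J\|_2^2$ and $\|u^n\|^{p-2}\|v^n\|_1^2$. The role of $(\mathcal{A}6)$ is precisely to dominate the $JJ^*$ contribution by $\phi\phi^*$ up to a term of order $\alpha_1\|v\|_V^q\wedge\|v\|_V^2$, and the strict condition $\alpha_1(p-2)<\alpha$ lets this contribution be absorbed into the coercivity estimate; together with $\varsigma,\|F(\cdot,0,0)\|_*^{q'}\in M^{p/2,1}(0,T)$ this closes the bound \eqref{main thm estimate} uniformly in $n$.

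The main obstacle is the passage to the limit, since only weak compactness is available. Extracting a subsequence yields $u^n\rightharpoonup u$ in $M^{pq/2,q}(0,T;V)$, $v^n\rightharpoonup v$ in $M^{p,2}(0,T;L(U_1,H))$, $F(\cdot,u^n,v^n)\rightharpoonup\bar F$, and $J(\cdot,u^n,v^n)\rightharpoonup\bar J$ in the appropriate Bochner spaces, and passing to the weak limit in the equation shows that $(u,v)$ satisfies \eqref{bdsdes} with $(\bar F,\bar J)$ in place of $(F,J)$. The delicate point is to identify $\bar F=F(\cdot,u,v)$ and $\bar J=J(\cdot,u,v)$ without strong convergence, and this is where I would invoke the weak-convergence lemma from Section 3. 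Applying the It\^o formula to $e^{-K_1 t}\|u^n\|^2$ and to $e^{-K_1 t}\|u\|^2$, subtracting and combining with the monotonicity inequality $(\mathcal{A}2)$ against an arbitrary test pair $(\varphi,\phi)$, yields an integral inequality that survives the weak limit; substituting $\varphi=u+s\psi$, dividing by $s$, sending $s\to 0$, and using hemicontinuity $(\mathcal{A}1)$ together with the Lipschitz continuity $(\mathcal{A}5)$ identifies $(\bar F,\bar J)$ with $(F(\cdot,u,v),J(\cdot,u,v))$ almost everywhere in the Minty--Browder fashion. Weak lower semi-continuity then transfers the a priori bound to $(u,v)$, and the already-established uniqueness upgrades the subsequential convergence to convergence of the whole sequence, completing the proof.
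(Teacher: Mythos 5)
Your overall architecture --- uniqueness via the It\^o formula plus $(\mathcal{A}2)$ and $\delta<1$, existence via Galerkin approximation, uniform a priori estimates from $(\mathcal{A}3)$ and $(\mathcal{A}6)$ under $\alpha_1(p-2)<\alpha$, weak compactness, and a Minty--Browder identification of the weak limits --- is exactly the paper's strategy; your uniqueness argument and your limit-identification step coincide with Lemma \ref{lem uniqueness} and Lemma \ref{lem in pre variation}.

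The gap is in the sentence asserting that the projected finite-dimensional system is solved by ``a Picard/fixed-point argument in the spirit of Pardoux--Peng extended to the monotone setting.'' Under $(\mathcal{A}1)$--$(\mathcal{A}4)$ the drift $F(\omega,t,\cdot,\phi)$ is only hemicontinuous, monotone and of polynomial growth in the $u$-variable (think of $F(u)=-u|u|^{r-2}$); it is not Lipschitz, a Picard iteration in $u$ has no reason to converge, and the Pardoux--Peng existence theorem genuinely requires Lipschitz coefficients. This is the central difficulty of the finite-dimensional case, and the paper spends most of Section 4.1 on it: it first freezes $\tilde v$, replaces $F(\omega,t,\cdot,\tilde v)$ by its Yosida approximation $F_{\eps}$ (which \emph{is} Lipschitz, so Lemma \ref{lem finite lipchitz case} applies), derives $\eps$-uniform estimates, and identifies the weak limit as $\eps\downarrow 0$ by another application of Lemma \ref{lem in pre variation}; only then does it run a Picard iteration, and only in the $v$-argument of $F$, where $(\mathcal{A}5)$ supplies genuine Lipschitz continuity, followed by a further truncation step for the zeroth-order data $F(\cdot,0,0)$, $J(\cdot,0,0)$ and $G$. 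Your proposal collapses all of this into one phrase and thereby skips the step where the monotone structure is actually used to \emph{produce} solutions rather than merely to identify limits. A second, smaller omission: since $W$ and $B$ are cylindrical (infinite-dimensional) Wiener processes, projecting the state space onto $V_n$ does not by itself reduce matters to Theorem \ref{thm finite dim}, which is stated for $U_1=\bR^m$, $U_2=\bR^l$; the paper also projects the noises ($W^n=P^1_nW$, $B^n=P^2_nB$) and conditions the coefficients on the finite-dimensional filtration $\sF^n_t$, and one must check that these conditional expectations preserve $(\mathcal{A}1)$--$(\mathcal{A}6)$ and that the two approximating pairs $(F^n,J^n)$ and $(\tilde F^n,\tilde J^n)$ have the same weak limit.
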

%
%

Here, we point out that we always denote by $C>0$ a constant which may vary from line to line and moreover, we denote by $C(a_1,a_2,\cdots)$ a constant which depends on the variables $a_1,a_2,\cdots$ just like the one appearing in the following typical inequality
$$ ab\leq \eps a^2+C(\eps) b^2, \eps>0,a,b\in \bR.$$
\section{Auxiliary results}

First, we give a useful lemma with the sketch of its proof.
\begin{lem}\label{lem in prelimilary}
 For any given $p\geq 1,q,d>1,r\geq 2$ and separable reflexive Banach spaces $U$ and $\bar{U}$, with $U$ continuously and densely embedded into $\bar{U}$, we assert that

  (i) $M^p(0,T;U)$, $S^p(0,T; U)$ and $M^{q,d}(0,T;L(U_i,H)),i=1,2$ are all separable Banach spaces, and moreover, $M^q(0,T;U)$ and $M^{q,d}(0,T;L(U_i,H)),i=1,2$ are reflexive;

  (ii) let $\{u_n,n\in\bN\}$ converge weakly to $u$ in $M^p(0,T;U)$ and to $\bar{u}$ in $M^{q,d}(0,T;\bar{U})$, then
  $\bar{u}(\omega,t)=u(\omega,t)$ for $\mathbb{P}\otimes dt$-almost all $(\omega,t)\in \Omega\times [0,T]$;

  (iii) define two linear operators
  \begin{equation}
  \begin{split}
    \mathcal{I}(f): & =\int_{\cdot}^T f(s)\ ds,~~f\in M^q(0,T;U);\\
    \mathcal{J}(h): & =\int^T_{\cdot} h(s)\ dW_s,~~h
    \in M^{q,2}(0,T;L(U_1,H));
  \end{split}
  \end{equation}
then the linear operators $\mathcal{I}$ and $\mathcal{J}$ are continuous from $M^q(0,T;U)$ to itself and from $M^{q,2}(0,T;L(U_1,H))$ to $M^{q,2}(0,T;H)$ respectively, and moreover, they both are continuous with respect to the corresponding weak topologies;

    (iv) letting $u_n$, $f_n$, $h_n$ and $z_n$ converge weakly to $u$, $f$, $h$, and $z$ in spaces $M^p(0,T;H)$,
    $M^q(0,T;V')$, $M^{r,2}(0,T;L(U_2,H))$ and $M^{r,2}(0,T;L(U_1,H))$ respectively, then we conclude from
    \begin{equation}
      \begin{split}
        &\lim_{n\rightarrow \infty}\|G^n-G\|_{L^d(\Omega,\sF_T,H)}=0,\\
        &u_n(t)=G^n+\int_t^Tf_n(s)\ ds+\int_t^Th_n(s)\ d \overleftarrow{B}_s-\int_t^Tz_n(s)\ dW_s\\
       \textrm{and  }&
        \bar{u}(t):=G+\int_t^Tf(s)\ ds+\int_t^Th(s)\ d\overleftarrow{B}_s-\int_t^Tz(s) \ dW_s
      \end{split}
    \end{equation}
    that $u(\omega,t)=\bar{u}(\omega,t)$ for $\mathbb{P}\otimes dt$-almost all $(\omega,t)\in \Omega\times[0,T]$.
\end{lem}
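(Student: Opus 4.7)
The plan is to view each of the spaces $M^p(0,T;U)$, $S^p(0,T;U)$ and $M^{q,d}(0,T;L(U_i,H))$ as a closed subspace of the corresponding classical Bochner space, so that separability and reflexivity in (i) are inherited from the ambient structure. For separability one recalls that $L^p(\Omega,\sF,L^q([0,T];U))$ is separable whenever $U$ is separable and $p,q\in[1,\infty)$, and that $L^p(\Omega,\sF,C([0,T];U))$ has the same property; closedness of the $\sF_t$-measurability condition (for a.e.\ $t$) under $L^p$--convergence follows by extracting an a.s.\ convergent subsequence. For reflexivity one uses in addition that $L^q(\Omega,\sF,L^d([0,T];U))$ is reflexive when $U$ is reflexive and $q,d\in(1,\infty)$, and that a closed subspace of a reflexive space is reflexive.

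For (ii) I would test both weak convergences against a common family of dual elements. Since $U$ is densely embedded in $\bar U$, the dual embedding $\bar U'\hookrightarrow U'$ is continuous, so every element of the form $\eta\,1_{[s,t]}\,\xi$ with $\eta\in L^\infty(\Omega)$, $0\le s<t\le T$ and $\xi\in\bar U'$ belongs simultaneously to the dual of $M^p(0,T;U)$ and to the dual of $M^{q,d}(0,T;\bar U)$; testing the two weak limits against this family gives
\[
E\!\left[\eta\int_s^t\!\langle u(r),\xi\rangle\,dr\right]=E\!\left[\eta\int_s^t\!\langle\bar u(r),\xi\rangle\,dr\right],
\]
and a monotone-class argument (varying $\eta$, $[s,t]$ and $\xi$ over a countable dense subset of $\bar U'$) yields $u=\bar u$ $\bP\otimes dt$-a.e.\ For (iii), H\"older in time gives $\|\mathcal{I}(f)(t)\|_U\le T^{1-1/q}\|f\|_{L^q([0,T];U)}$, which makes $\mathcal{I}$ bounded from $M^q(0,T;U)$ into itself; Doob's maximal inequality combined with the Burkholder--Davis--Gundy inequality controls $E\sup_t\|\mathcal{J}(h)(t)\|^q$ by $C\,E\bigl(\int_0^T\|h(s)\|_1^2\,ds\bigr)^{q/2}$, giving the boundedness of $\mathcal{J}$. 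Since any bounded linear operator between Banach spaces is automatically sequentially weak-to-weak continuous, the second half of (iii) is immediate.

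For (iv) I would combine (ii) and (iii). Applying (iii) to $\mathcal{I}$, $\mathcal{J}$ and, via the analogous backward-integral estimate, to $h\mapsto\int_\cdot^T h(s)\,d\overleftarrow{B}_s$, each of $\int_\cdot^T f_n\,ds$, $\int_\cdot^T h_n\,d\overleftarrow{B}_s$ and $\int_\cdot^T z_n\,dW_s$ converges weakly in its appropriate Bochner space to the corresponding limit integral, while $G^n\to G$ in $L^d(\Omega,\sF_T,H)$ is strong (hence weak). After pushing the four convergences into a common space via the Gelfand embedding $H\hookrightarrow V'$, they combine to give $u_n\rightharpoonup\bar u$. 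Since $u_n\rightharpoonup u$ in $M^p(0,T;H)$ by hypothesis, part (ii) (applied with $U=H$ and $\bar U=V'$, noting that $V$ dense in $H$ implies $H$ dense in $V'$) then identifies $u=\bar u$ $\bP\otimes dt$-a.e.\ The main bookkeeping point I expect is keeping the integrability exponents $p,q,r,d$ compatible so that all four weak convergences live in a single Bochner space to which (ii) can be applied; once that is arranged, every step reduces to a standard boundedness estimate plus a weak-limit identification.
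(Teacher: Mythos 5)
Your proposal is correct and follows essentially the same route as the paper's (sketched) proof: (i) by closedness inside the ambient Bochner spaces, (ii) by testing both weak limits against a common separating family in $\bar U'$, (iii) by the H\"older and Burkholder--Davis--Gundy bounds plus the fact that bounded linear operators are weak-to-weak continuous, and (iv) by combining the previous parts. You in fact supply slightly more detail than the paper does, but no genuinely different idea is involved.
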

\begin{proof}
  (i) is obvious. From the definition of weak convergence, it follows that
  $$\int_{\Omega\times[0,T]} 1_{\{ A \}}(\omega,s)f(u(\omega,s)-\bar{u}(\omega,s) )\ \mathbb{P}(d\omega)ds=0,\ \forall A\in \sF\otimes\cB([0,T]),\,\forall f\in \bar{U}',$$
    which implies (ii). As for (iii), it follows from
       $$\|\mathcal{I}(f)\|_{M^q(0,T;U)}^q
       =E\left[\int_0^T\Big\|\int^T_t f(s)\ ds\Big\|_U^q  \  dt\right]
       \leq T^q E\left[\int_0^T\|f(s)\|_U^q \   ds\right]$$
and
   \begin{equation}
     \begin{split}
       \|\mathcal{J}(h)\|_{M^{q,2}(0,T;H)}^q
       =&\ E\left[\left(\int_0^T\|\int^T_th(s)\ dW_s\|^2\ dt\right)^{q/2}\right]\\
        \leq&\
	T^{q/2} E\left[\sup_{t\in [0,T]}\|\int^T_th(s)\ dW_s \|^q\right]\\
        \leq&\
         CE\left[ \left(\int_0^T\|h(s)\|_1^2\ ds\right)^{q/2}\right].
     \end{split}
   \end{equation}
   Finally, (iv) can be deduced from the above assertions (i), (ii) and (iii).
\end{proof}

As in the theory on the forward stochastic evolutionary systems (c.f. \cite{Krylov_Rozovskii81,RenRocknerWang2007}), the following It\^o formula plays a crucial role in the proof of our main result Theorem \ref{thm main}.

\begin{thm}\label{thm ito formula}
  Let $\xi\in  L^2(\Omega,\sF_T,H)$, $q>1$, $q'=\frac{q}{q-1}$, $f\in M^{q'}(0,T;V')$ and
  $h\in M^2(0,T;L(U_2,H))$. Assume  $(u,v)\in M^q(0,T;V)\times M^2(0,T;L(U_1,H))$
   and that the
  following BDSDES:
  \begin{equation}\label{BDSES trivialcase}
    u(t)=\xi+\int_t^T f(s)\ ds+\int_t^T h(s)\ d\overleftarrow{B}(s)-\int_t^T v(s)\ dW_s,\ t\in
    [0,T]
  \end{equation}
  holds in the weak sense of Definition \ref{definition solution}.
  Then we assert that $u\in S^2(0,T;H)$ and the following It\^o formula
  holds almost surely
  \begin{equation}\label{Ito formula square}
    \begin{split}
        \|u(t)\|^2=\ &\|\xi\|^2+\int_t^T\left( 2\langle f(s),\ u(s)\rangle
                    +\|h(s)\|^2_2-\|v(s)\|_1^2\right)\ ds\\
                    &+\int_t^T 2 \langle u(s),\ h(s)\ d\overleftarrow{B}_s\rangle
                    -\int_t^T2\langle u(s),\ v(s)\ dW_s\rangle
    \end{split}
  \end{equation}
    for all $t\in[0,T]$.
\end{thm}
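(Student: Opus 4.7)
The plan is to establish the It\^o formula by approximating $(f,h,v)$ with piecewise constant (in time) processes for which the formula reduces to a finite-dimensional computation, and then pass to the limit using Lemma \ref{lem in prelimilary}(iv) together with the It\^o isometries for the forward $W$-integral and the backward $B$-integral.

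First I would show that $u\in S^2(0,T;H)$. To this end, introduce a time mollifier $\rho_\eps$, consider the mollified process $u_\eps:=u*\rho_\eps$ (suitably extended outside $[0,T]$), which is $V$-valued and $C^1$ in time, so the elementary chain rule gives $\tfrac{d}{dt}\|u_\eps(t)\|^2=2\la\dot u_\eps(t),u_\eps(t)\ra$. Integrating, substituting the mollified version of the equation for $u$, and then letting $\eps\to 0$ using Doob's maximal inequality applied to both the forward $W$-martingale and the time-reversed $B$-martingale should yield a continuous $H$-valued modification of $u$ with the required supremum bound.

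Next, for \eqref{Ito formula square} itself, approximate $f$ by piecewise constant $f_n\to f$ in $M^{q'}(0,T;V')$, $h$ by piecewise constant $h_n\to h$ in $M^2(0,T;L(U_2,H))$, and $v$ by piecewise constant $v_n\to v$ in $M^2(0,T;L(U_1,H))$, and define $u_n$ by the corresponding integral representation with $\xi,f_n,h_n,v_n$ in place of $\xi,f,h,v$. On each subinterval of the time partition, both stochastic integrals have constant integrands, so the classical $H$-valued It\^o formula applied piecewise (or equivalently the Pardoux--Peng BDSDE It\^o formula in its simplest form) produces the analogue of \eqref{Ito formula square} for $u_n$. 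The passage to the limit then relies on Lemma \ref{lem in prelimilary}(iv) to identify the weak limit of $u_n$ with $u$, on the Burkholder--Davis--Gundy inequality applied to both stochastic integrals, and on dominated convergence for the Bochner integrals.

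The main obstacle I anticipate is controlling the duality pairing $\int_t^T\la f_n(s),u_n(s)\ra\,ds\to\int_t^T\la f(s),u(s)\ra\,ds$, which needs more than weak convergence in both factors. A standard remedy is to use the identity for $\|u_n(t)\|^2$ already obtained from the piecewise step, combined with the $S^2$-continuity established in the first step, as a self-consistency check: this upgrades the convergence of $u_n$ in $M^q(0,T;V)$ enough to close the argument. A second technical subtlety is verifying that in the piecewise computation no forward/backward cross-variation between $W$ and $\overleftarrow B$ appears; this is where the independence of $W$ and $B$ and the corresponding orthogonality of the two stochastic integrals in the filtration $\{\sF_t\}$ become essential.
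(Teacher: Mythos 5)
Your plan approximates the wrong object. The paper's proof (following Krylov--Rozovskii and Pr\'ev\^ot--R\"ockner) approximates the \emph{solution} $u$ itself: Lemma \ref{lem ito formula} produces partitions $I_l$ with $u(t_i^l)\in V$ almost surely, one writes $\|u(t)\|^2-\|\xi\|^2$ as a telescoping sum over partition points, and the cross terms are controlled by finite-dimensional projections $P_n$. Your proposal instead approximates the data $(f,h,v)$ by piecewise-constant-in-time processes and invokes ``the classical $H$-valued It\^o formula'' on each subinterval. This does not work: making $f_n$ piecewise constant in time leaves it $V'$-valued, so the drift of $u_n$ is still only $V'$-valued and $\|u_n(t)\|^2$ is not governed by any classical formula --- the whole difficulty of the theorem is precisely the pairing of a $V'$-valued drift against a $V$-valued solution, and that difficulty is unchanged by time-discretization of $f$. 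If you instead regularize $f_n$ into $H$ to make the classical formula applicable, the resulting $u_n$ is $H$-valued but has no reason to lie in $M^q(0,T;V)$, so the term $\int_t^T\langle f_n(s),u_n(s)\rangle\,ds$ cannot be passed to the limit $\int_t^T\langle f(s),u(s)\rangle\,ds$ with $f\in M^{q'}(0,T;V')$: the duality pairing on the right has no counterpart along the approximating sequence. The ``self-consistency check'' you invoke to upgrade the convergence is not an argument; it is exactly the point where the proof must be supplied.

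The first step is also circular. The mollification device $u_\eps=u*\rho_\eps$ with the chain rule $\frac{d}{dt}\|u_\eps\|^2=2\langle\dot u_\eps,u_\eps\rangle$ is the standard route to $C([0,T];H)$-regularity for \emph{deterministic} evolution equations with $u\in L^q(0,T;V)$ and $u'\in L^{q'}(0,T;V')$; here the ``derivative'' of $u$ contains the two stochastic integrals, which are of unbounded variation, so $u$ is not in $W^{1,q'}(0,T;V')$, and subtracting the stochastic integrals to restore that structure destroys the $V$-valuedness (the integrals are only $H$-valued). Moreover, bounding $E[\sup_t\|u_\eps(t)\|^2]$ via Doob or BDG requires an a priori bound on $\sup_t\|u(t)\|$ that you do not yet have. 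Note that in the paper $u\in S^2(0,T;H)$ is not proved first but \emph{last} (Step 5 of the appendix proof), as a consequence of the established identity \eqref{Ito formula square}, the bound $E[\sup_{t\in I}\|u(t)\|^2]<\infty$ over the dense set $I$ of partition points, the lower semicontinuity of $t\mapsto\|u(t)\|$, and the weak continuity of $u$ in $H$. The missing idea in your write-up is the sampling of $u$ at good partition times together with the telescoping identity \eqref{proof of ito formula indentity} and the $P_n$-projection argument that kills the residual quadratic terms $\gamma_0,\gamma_1$.
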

Here, we note that some techniques to prove Theorem \ref{thm ito formula} are borrowed from \cite{PrevotRockner2007,RenRocknerWang2007} and for the reader's convenience, we give the proof in the appendix.
\begin{lem}\label{lem uniqueness}
The solution in Theorem \ref{thm main} is unique.
\end{lem}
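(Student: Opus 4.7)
The plan is to take two solutions $(u_1,v_1)$ and $(u_2,v_2)$ of BDSDES \eqref{bdsdes}, look at the difference $\bar u:=u_1-u_2$, $\bar v:=v_1-v_2$, and apply the It\^o formula of Theorem \ref{thm ito formula} to $\|\bar u(t)\|^2$, then use the monotonicity assumption $(\mathcal A2)$ to kill the right-hand side up to a Gronwall-type term.

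First, I would verify that the difference process fits the hypotheses of Theorem \ref{thm ito formula}. Since each $(u_i,v_i)$ lies in $\big(M^{pq/2,q}(0,T;V)\cap S^p(0,T;H)\big)\times M^{p,2}(0,T;L(U_1,H))$ with $p\geq 2$, the difference $\bar u$ is in $M^q(0,T;V)\cap S^2(0,T;H)$ and $\bar v\in M^2(0,T;L(U_1,H))$. Using the Lipschitz estimate $(\mathcal A5)$, the growth $(\mathcal A4)$ together with $(\mathcal A2)$-$(\mathcal A3)$, the drift difference $f(s):=F(s,u_1,v_1)-F(s,u_2,v_2)$ lies in $M^{q'}(0,T;V')$ and the diffusion difference $h(s):=J(s,u_1,v_1)-J(s,u_2,v_2)$ lies in $M^2(0,T;L(U_2,H))$, with terminal value $\bar u(T)=0$. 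So Theorem \ref{thm ito formula} is applicable.

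Applying the It\^o formula and then $(\mathcal A2)$ to the integrand yields almost surely, for all $t\in[0,T]$,
\begin{equation*}
\begin{split}
\|\bar u(t)\|^2+\int_t^T\|\bar v(s)\|_1^2\,ds
\leq\ &\int_t^T\bigl(K_1\|\bar u(s)\|^2+\delta\|\bar v(s)\|_1^2\bigr)\,ds\\
&+2\int_t^T\langle\bar u(s),h(s)\,d\overleftarrow{B}_s\rangle-2\int_t^T\langle\bar u(s),\bar v(s)\,dW_s\rangle.
\end{split}
\end{equation*}
Taking expectations makes both stochastic integrals vanish (standard martingale argument using $\bar u\in S^2(0,T;H)$ together with the Hilbert-Schmidt bounds on $h$ and $\bar v$; if integrability at the $L^2$ level is not immediate one localises by a stopping time $\tau_n\uparrow T$, concludes the bound for $\bar u(\,\cdot\,\wedge\tau_n)$, and passes to the limit by dominated convergence). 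Rearranging and using $\delta<1$ gives
\begin{equation*}
E\|\bar u(t)\|^2+(1-\delta)E\int_t^T\|\bar v(s)\|_1^2\,ds\leq K_1\int_t^T E\|\bar u(s)\|^2\,ds.
\end{equation*}

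The backward Gronwall inequality then forces $E\|\bar u(t)\|^2=0$ for every $t\in[0,T]$, hence $u_1=u_2$ in $S^2(0,T;H)$, and the displayed inequality in turn yields $\bar v=0$ in $M^2(0,T;L(U_1,H))$. The only subtle points I anticipate are (a) checking that the difference process genuinely satisfies the hypothesis of Theorem \ref{thm ito formula} (which reduces to verifying membership of $f$ and $h$ in the stated spaces via $(\mathcal A4)$ and $(\mathcal A5)$, together with the fact that the equation for $\bar u$ holds in the weak sense, inherited from the definition of a solution) and (b) justifying that the martingale parts have zero expectation, which is the only place where one might need a localisation argument.
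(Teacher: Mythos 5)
Your proposal is correct and follows essentially the same route as the paper: difference of two solutions, the It\^o formula of Theorem \ref{thm ito formula}, assumption $(\mathcal A2)$, and then a Gronwall-type absorption of the $K_1\|\bar u\|^2$ term (the paper does this via the integrating factor $e^{K_1 t}$ and the product rule rather than by invoking the backward Gronwall inequality explicitly, which is the same argument). Your additional care in verifying the hypotheses of Theorem \ref{thm ito formula} via $(\mathcal A4)$--$(\mathcal A5)$ and in justifying the vanishing expectations of the stochastic integrals is left implicit in the paper but is consistent with it (cf.\ Remark \ref{rmk unique}).
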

\begin{proof}
	Suppose $(u^1,v^1)$ and $(u^2,v^2)$ are two solutions of \eqref{bdsdes} in
	$$\left(M^{pq/2,q}(0,T;V)\cap S^p(0,T;H)\right)\times M^{p,2}(0,T;L(U_1,H)).$$
	Letting $(\bar{u},\bar{v})=(u^1-u^2,v^1-v^2)$, then by the product rule, It\^o formula and assumption $(\cA 2)$, we obtain
  \begin{equation}
    \begin{split}
      &E\left[e^{K_1t}\|\bar{u}(t)\|^2\right]\\
      =\ &
      E\bigg[
            \int_t^Te^{K_1s}\big(2\langle F(s,u^1(s),v^1(s))-F(s,u^2(s),v^2(s)),\ \bar{u}(s)\rangle  \\
        & \ \ \ \ \   +\|J(s,u^1(s),v^1(s))-J(s,u^2(s),v^2(s))\|_2^2
            -K_1\|\bar{u}(s)\|^2-\|\bar{v}(s)\|_1^2\big)\ ds\bigg]\\
      \leq\ &
      (\delta-1)E\left[ \int_t^Te^{K_1s}\|\bar{v}(s)\|_1^2\ ds  \right],\  t\in [0,T]
    \end{split}
  \end{equation}
  which implies $$ E\left[e^{K_1t}\|\bar{u}(t)\|^2+\int_t^Te^{K_1s}\|\bar{v}(s)\|_1^2\ ds\right]\leq 0.  $$
  Thus, $(\bar{u},\bar{v})=0$ $\mathbb{P}\otimes dt$-a.e..
  The path-wise uniqueness follows from the path continuity of $u^1,u^2$ in $H$. We complete the proof.
\end{proof}
\begin{rmk}\label{rmk unique}
  From the proof of Lemma \ref{lem uniqueness}, it follows that the uniqueness is only implied by assumptions $(\cA 2)$ and $(\cA 4)$.
\end{rmk}

In recent years, the monotonicity method (for instance, see \cite{Browder63,Minty62,Showalter96,NonlinearFuncAna}) is generalized and intensively used to analyze SPDEs (for example, see \cite{Krylov_Rozovskii81,LiuwRockner2010,Pardoux1979,PrevotRockner2007,RenRocknerWang2007,ZhangXich2009}) and BSPDEs (see \cite{MarquezDuranReal2004,QiuTangMPBSPDE11,ZhangXich2009}).
In the present paper, we shall generalize it to investigate the BDSDESs. Now, we show a useful lemma which plays an important role in the variational approach and will be used frequently below.
\begin{lem}\label{lem in pre variation}
    Let $p\geq 2$, $q>1$ and $\varsigma \in M^{p/2,1}(\tau,T)$ with $\tau$ ($0\leq\tau< T$) being one stopping time with respect to the backward filtration $\{\sF_{t,T}^B,t\in [0,T]\}$. The pair $(F,J)$ satisfy
     assumptions $(\cA 1),(\cA 2)$ and $(\cA 4)$ with $0\leq \beta\leq p-2$ on $[\tau,T]:=\{(\omega,t):t\in [\tau(\omega),T]\}$.  Moreover, we assume that there hold the following
    \begin{equation*}
      \begin{split}
        &\textrm{(a) }u^n\longrightarrow u\textrm{ weakly in }M^{pq/2,q}(\tau,T;V),\textrm{ as }n\rightarrow\infty ;\\
        &\textrm{(b) }u^n\longrightarrow u\textrm{ weakly star in }L^{p}(\Omega,L^{\infty}([\tau,T],H))\textrm{ as }n\rightarrow\infty ;\\
        &\textrm{(c) }v^n\longrightarrow v \textrm{ weakly in }M^{p,2}(\tau,T;L(U_1,H))\textrm{ as }n\rightarrow\infty ;\\
        &\textrm{(d) }G^n\longrightarrow G \textrm{ strongly in } L^p(\Omega,\sF_T,H) \textrm{ as }n\rightarrow\infty;   \\
        &\textrm{(e) }F^n(\cdot,u^n(\cdot),v^n(\cdot))\longrightarrow \bar{F} \textrm{ weakly in }M^{q'}(\tau,T;V')\textrm{ as }n\rightarrow\infty ;\\
        &\textrm{(f) }
	\textrm{for }\mathbb{P}\otimes dt\textrm{-almost } (\omega,t)\in \Omega\times[0,T],
        \lim_{n\rightarrow \infty}\|F^n(\omega,t,\varphi,\xi)-F(\omega,t,\varphi,\xi)\|_*=0\\
        &\textrm{ and }
        \lim_{n\rightarrow \infty}\|J^n(\omega,t,\varphi,\xi)-J(\omega,t,\varphi,\xi)\|_2=0
        \textrm{ hold for all }\varphi\in V \textrm{ and all } \xi\in L(U_1,H);       \\
        &\textrm{(g) }J^n(\cdot,u^n(\cdot),v^n(\cdot))\longrightarrow \bar{J} \textrm{ weakly in }M^{2}(\tau,T;L(U_2,H))\textrm{ as }n\rightarrow\infty ;\\
        &\textrm{(h) for each } n\in\bN,\\
        &\ \ u^n(t)=G^n+\int_t^T F^n(s,u^n(s),v^n(s))\ ds+\int_t^TJ^n(s,u^n(s),v^n(s))\ d\overleftarrow{B}_s\\
                    &~~~~~~~~~~~~~~-\int_t^Tv^n(s)\ dW_s,\textrm{ holds in the weak sense of Definitioin } \ref{definition solution},\\
      \end{split}
    \end{equation*}
    where for each $n\in\bN$, the pair $(F^n,J^n)$ satisfies assumptions $(\cA 2)$ and $(\cA 4)$  on $[\tau,T]$.

    Then
    $(u,v)\in\left(M^{pq/2,q}(\tau,T;V)\cap S^p(\tau,T;H)\right)\times M^{p,2}(0,T;L(U_1,H))$
	 is the unique solution
    to \eqref{bdsdes}.
\end{lem}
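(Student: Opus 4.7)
The plan is to (i) pass to the limit in the BDSDE via Lemma~\ref{lem in prelimilary}(iv), (ii) identify $\bar{F}$ and $\bar{J}$ by the Minty monotonicity trick, and (iii) invoke the uniqueness lemma already established. For step (i), the weak convergences (c)--(e), (g) together with the strong limit (d) satisfy the hypotheses of Lemma~\ref{lem in prelimilary}(iv), which applied to (h) yields
\begin{equation*}
u(t) = G + \int_t^T \bar{F}(s)\,ds + \int_t^T \bar{J}(s)\,d\overleftarrow B_s - \int_t^T v(s)\,dW_s
\end{equation*}
in the weak sense. Since $\bar{F}\in M^{q'}(\tau,T;V')$, $\bar{J}\in M^2(\tau,T;L(U_2,H))$, $u\in M^q(\tau,T;V)$ by (a), and $G\in L^2(\Omega,\sF_T,H)$, Theorem~\ref{thm ito formula} supplies a continuous $H$-valued modification in $S^2(\tau,T;H)$; combined with the weak-$*$ bound (b) and path continuity of this modification, this gives $u\in S^p(\tau,T;H)$.

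The essential step is to show that $\bar{F}=F(\cdot,u,v)$ and $\bar{J}=J(\cdot,u,v)$ $\bP\otimes dt$-a.e. For an arbitrary test pair $(\phi,\eta)\in M^{pq/2,q}(\tau,T;V)\times M^{p,2}(\tau,T;L(U_1,H))$, assumption $(\cA 2)$ applied to $(F^n,J^n)$ gives
\begin{equation*}
\begin{split}
&2\langle F^n(s,u^n,v^n)-F^n(s,\phi,\eta),\,u^n-\phi\rangle+\|J^n(s,u^n,v^n)-J^n(s,\phi,\eta)\|_2^2\\
&\quad\le K_1\|u^n-\phi\|^2+\delta\|v^n-\eta\|_1^2.
\end{split}
\end{equation*}
Multiplying by $e^{K_1 s}$, integrating over $[\tau,T]$, and taking expectations, one uses Theorem~\ref{thm ito formula} applied to $u^n$ to re-express $E\int_\tau^T e^{K_1 s}\,2\langle F^n(u^n,v^n),u^n\rangle\,ds$ in terms of the boundary quantities $E[e^{K_1 T}\|G^n\|^2]$, $E[e^{K_1\tau}\|u^n(\tau)\|^2]$ and the integrated term $E\int e^{K_1 s}(K_1\|u^n\|^2+\|v^n\|_1^2-\|J^n(u^n,v^n)\|_2^2)\,ds$. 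Passing $n\to\infty$, the quadratic contributions in $(u^n,v^n)$ yield the correct limits by weak lower semicontinuity (via (a)--(c)); $\|G^n\|^2\to\|G\|^2$ strongly by (d); and the cross terms $\langle F^n(\phi,\eta),\cdot\rangle$, $\langle\cdot,J^n(\phi,\eta)\rangle_2$ converge via the pointwise limit (f) combined with the growth $(\cA 4)$ and the dominated convergence theorem.

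The limiting inequality
\begin{equation*}
E\int_\tau^T\! e^{K_1 s}\bigl(2\langle\bar{F}-F(\phi,\eta),u-\phi\rangle+\|\bar{J}-J(\phi,\eta)\|_2^2-K_1\|u-\phi\|^2-\delta\|v-\eta\|_1^2\bigr)\,ds\le 0
\end{equation*}
then closes the trick: taking $\phi=u$ and $\eta=v$ collapses it to $E\int_\tau^T e^{K_1 s}\|\bar{J}-J(u,v)\|_2^2\,ds\le 0$, whence $\bar{J}=J(\cdot,u,v)$; taking $\eta=v$ and $\phi=u-\lambda w$ for arbitrary $w\in M^{pq/2,q}(\tau,T;V)$, dropping the nonnegative $\|\bar{J}-J(u-\lambda w,v)\|_2^2$ term, dividing by $\lambda>0$, and letting $\lambda\downarrow 0$, the hemicontinuity $(\cA 1)$ together with the growth $(\cA 4)$ (via dominated convergence) yields $E\int_\tau^T e^{K_1 s}\langle\bar{F}-F(u,v),w\rangle\,ds\le 0$, which combined with the analogous inequality for $-w$ forces $\bar{F}=F(\cdot,u,v)$. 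Thus $(u,v)$ solves \eqref{bdsdes} on $[\tau,T]$ with terminal $G$, and uniqueness is Lemma~\ref{lem uniqueness} (valid under $(\cA 2)$ and $(\cA 4)$ alone by Remark~\ref{rmk unique}). The chief obstacle is the Minty limit passage, and specifically the boundary term $E[e^{K_1\tau}\|u^n(\tau)\|^2]$: weak convergence of $u^n$ does not automatically control this slice, so a standard fix is to integrate the It\^o identity against a smooth weight near $\tau$ and exploit path continuity of the limit, while the norm-type convergences $F^n(\phi,\eta)\to F(\phi,\eta)$ in $M^{q'}(\tau,T;V')$ and $J^n(\phi,\eta)\to J(\phi,\eta)$ in $M^2(\tau,T;L(U_2,H))$ must be extracted from the merely pointwise hypothesis (f) via the growth bound $(\cA 4)$.
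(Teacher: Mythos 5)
Your proposal follows essentially the same route as the paper: pass to the limit in the equation via Lemma \ref{lem in prelimilary}(iv), upgrade regularity with Theorem \ref{thm ito formula}, and identify $(\bar F,\bar J)$ by the Minty monotonicity argument, with the time-slice obstacle you flag handled exactly as you suggest --- the paper integrates the It\^o identity for $e^{K_1t}\|u^n(t)\|^2$ against an arbitrary nonnegative weight $\psi\in L^{\infty}([0,T])$ and uses weak lower semicontinuity of $w\mapsto E\int_0^T\psi(t)e^{K_1t}\|w(t)\|^2\,dt$. The only slip is the coefficient $\delta$ on $\|v-\eta\|_1^2$ in your limiting inequality: the limit passage actually produces coefficient $1$ (the term $-\|v^n\|_1^2$ enters through the It\^o identity rather than through $(\cA 2)$, and adding back $(1-\delta)\|v^n-\eta\|_1^2$ would have the wrong sign for lower semicontinuity), but since you subsequently take $\eta=v$ this is immaterial to the conclusion.
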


\begin{proof}
    Without any loss of generality, we take $\tau \equiv 0$.
     Define
     $$\bar{u}(t)=G+\int_t^T \bar{F}(s)\ ds+\int_t^T\bar{J}(s)\ d\overleftarrow{B}_s
                    -\int_t^Tv(s)\ dW_s,\ t\in [0,T].$$
		    From assertion (iv) of Lemma \ref{lem in prelimilary}, it follows that $u(\omega,t)=\bar{u}(\omega,t)$ for almost $\mathbb{P}\otimes dt$-$(\omega,t)\in [0,T]$. Identify $u$ with its modification $\bar{u}$. Then by Theorem \ref{thm ito formula}, we conclude that $u$ is an $H$-valued continuous process and thus $u\in S^p(0,T;H)$.
     It remains for us to prove $(F(\cdot,u(\cdot),v(\cdot)),J(\cdot,u(\cdot),v(\cdot)))
     =(\bar{F}(\cdot),\bar{J}(\cdot))\ \mathbb{P}\otimes dt$-a.e..

    For every
    $$(\varphi,\xi)\in\!\! \left(L^{pq/2}(\Omega,\sF;L^q(0,T;V))\!\cap\! L^p(\Omega,\sF; L^{\infty}(0,T;H))\right)
    \times L^p\left(\Omega,\sF;L^2(0,T;L(U_1,H))\right)$$
     it follows from (f)
    and the domination convergence theorem that
    \begin{equation}\label{proof of lem pre1}
      \begin{split}
        &\lim_{n\rightarrow \infty}E\bigg[\int_0^T\Big(\|J^n(s,\varphi(s),\xi(s))-J(s,\varphi(s),\xi(s))\|_2^2
        \\
        &+2\la J^n(s,u^n(s),v^n(s))-J^n(s,\varphi(s),\xi(s))
            ,\ J^n(s,\varphi(s),\xi(s))-J(s,\varphi(s),\xi(s))  \ra_2\\
      &+2\langle F^n(s,\varphi(s),\xi(s))-F(s,\varphi(s),\xi(s)),\ u^n(s)-\varphi(s)\rangle \Big)\ ds\bigg]=0.
      \end{split}
    \end{equation}

    On the other hand, we have
    \begin{align}
        &\langle F^n(s,u^n(s),v^n(s)),\ u^n(s)\rangle\nonumber\\
        =&
            \ \langle F^n(s,u^n(s),v^n(s))-F^n(s,\varphi(s),\xi(s)),\ u^n(s)-\varphi(s)\rangle \nonumber\\
        &
            \ +\langle F^n(s,\varphi(s),\xi(s))-F(s,\varphi(s),\xi(s)),\ u^n(s)-\varphi(s)\rangle
            \nonumber\\
        &
            \ +\langle F^n(s,u^n(s),v^n(s))-F(s,\varphi(s),\xi(s)),\ \varphi(s)\rangle
            +\langle F(s,\varphi(s),\xi(s)),\ u^n(s)\rangle ,\nonumber\\
        &
            \|J^n(s,u^n(s),v^n(s))\|_2^2\nonumber\\
        =&
            \ \|J^n(s,u^n(s),v^n(s))-J^n(s,\varphi(s),\xi(s))\|_2^2
            +\|J^n(s,\varphi(s),\xi(s))-J(s,\varphi(s),\xi(s))\|_2^2\nonumber\\
        &
            \ +2\la J^n(s,u^n(s),v^n(s))-J^n(s,\varphi(s),\xi(s))
            ,\ J^n(s,\varphi(s),\xi(s))-J(s,\varphi(s),\xi(s))  \ra_2\nonumber\\
        &
            \ +\langle J(s,\varphi(s),\xi(s)),\ J^n(s,u^n(s),v^n(s))-J(s,\varphi(s),\xi(s))\rangle_2
            \nonumber\\
        &\quad\ +\langle J^n(s,u^n(s),v^n(s)),\ J(s,\varphi(s),\xi(s))\rangle_2,\nonumber\\
            &\|v^n(s)\|_1^2
            =\|v^n(s)-\xi(s)\|_1^2+\la v^n(s)-\xi(s),\ \xi(s)  \ra_1
            +\la \xi(s),\ v^n(s)  \ra_1,\nonumber\\
        &\|u^n(s)\|=\|u^n(s)-\varphi(s)\|_1^2+\la u^n(s)-\varphi(s),\ \varphi(s)  \ra
            +\la \varphi(s),\ u^n(s)  \ra   \nonumber
    \end{align}
    and in view of $(\cA 2)$,
    \begin{align*}
        &2\langle F^n(s,u^n(s),v^n(s))-F^n(s,\varphi(s),\xi(s)),\ u^n(s)-\varphi(s)\rangle \\
        &
            +\|J^n(s,u^n(s),v^n(s))-J^n(s,\varphi(s),\xi(s))\|_2^2
            -\|v^n(s)-\xi(s)\|_1^2-K_1\|u^n(s)-\varphi(s)\|^2 \leq 0.
    \end{align*}
    Therefore, by Theorem \ref{thm ito formula} and the product rule, we have almost surely
    \begin{align*}
      &e^{K_1t}\|u^n(t)\|^2                             \\
      =\ &
            e^{K_1 T}\|G^n\|^2+\int_t^T2e^{K_1s}\langle u^n(s),\ J^n(s,u^n(s),v^n(s))\ d\overleftarrow{B}_s\rangle
        \\
      &
            +\int_t^Te^{K_1s}\Big[2\langle F^n(s,u^n(s),v^n(s)),\ u^n(s)\rangle -\|v^n(s)\|_1^2+\|J^n(s,u^n(s),v^n(s))\|_2^2
        \\
      &
            \quad\ -K_1\|u^n(s)\|^2\Big]\ ds
            -\int_t^T2e^{K_1s}\langle u^n(s),\ v^n(s)\ dW_s\rangle \\
      \leq\ &
            e^{K_1 T}\|G^n\|^2
            +\int_t^T2e^{K_1s}\langle u^n(s),\ J^n(s,u^n(s),v^n(s))\ d\overleftarrow{B}_s\rangle \\
        &
            +\int_t^Te^{K_1s}\Big[2\langle F^n(s,u^n(s),v^n(s))-F^n(s,\varphi(s),\xi(s)),\ u^n(s)-\varphi(s)\rangle \\
        &
            +\|J^n(s,u^n(s),v^n(s))-J^n(s,\varphi(s),\xi(s))\|_2^2
            -\|v^n(s)-\xi(s)\|_1^2-K_1\|u^n(s)-\varphi(s)\|^2\Big]\ ds\\
        &
            +\int_t^Te^{K_1s}\Big[ \|J^n(s,\varphi(s),\xi(s))-J(s,\varphi(s),\xi(s))\|_2^2
        \\
        &
            \quad\ +2\la J^n(s,u^n(s),v^n(s))-J^n(s,\varphi(s),\xi(s))
            ,\ J^n(s,\varphi(s),\xi(s))-J(s,\varphi(s),\xi(s))  \ra_2
        \\
        &
            \quad\ +2\langle F^n(s,\varphi(s),\xi(s))-F(s,\varphi(s),\xi(s)),\ u^n(s)-\varphi(s)\rangle \\
        &
            \quad\ +2\langle F^n(s,u^n(s),v^n(s))-F(s,\varphi(s),\xi(s)),\ \varphi(s)\rangle +2\langle F(s,\varphi(s),\xi(s)),\ u^n(s)\rangle \\
        &
            \quad\ +\langle J(s,\varphi(s),\xi(s)),\ J^n(s,u^n(s),v^n(s))-J(s,\varphi(s),\xi(s))\rangle_2
            \\
        &
            \quad\ +\langle J^n(s,u^n(s),v^n(s)),\ J(s,\varphi(s),\xi(s))\rangle_2 \\
        &
            \quad\ -\langle \xi(s),\ v^n(s)-\xi(s)\rangle_1 -\langle v^n(s),\ \xi(s)\rangle_1 -K_1\langle \varphi(s),\ u^n(s)-\varphi(s)\rangle
            \\
        &
            \quad\ -K_1\langle u^n(s),\ \varphi(s)\rangle  \Big]\ ds-\int_t^T2e^{K_1s}\langle u^n(s),\ v^n(s)\ dW_s\rangle \\
      \leq\  &
            e^{K_1 T}\|G^n\|^2+\int_t^T2e^{K_1s}\langle u^n(s),\ J^n(s,u^n(s),v^n(s))\ d\overleftarrow{B}_s\rangle
            \\
        &
            +\int_t^Te^{K_1s}\Big[\|J^n(s,\varphi(s),\xi(s))-J(s,\varphi(s),\xi(s))\|_2^2
            \\
        &
            \quad\ +2\la J^n(s,u^n(s),v^n(s))-J^n(s,\varphi(s),\xi(s))
            ,\ J^n(s,\varphi(s),\xi(s))-J(s,\varphi(s),\xi(s))  \ra_2
        \\
        &
            \quad\ +2\langle F^n(s,\varphi(s),\xi(s))-F(s,\varphi(s),\xi(s)),\ u^n(s)-\varphi(s)\rangle
            \\
        &
            \quad\ +2\langle F^n(s,u^n(s),v^n(s))-F(s,\varphi(s),\xi(s)),\ \varphi(s)\rangle
            +2\langle F(s,\varphi(s),\xi(s)),\ u^n(s)\rangle \\
        &
            \quad\ +\langle J(s,\varphi(s),\xi(s)),\ J^n(s,u^n(s),v^n(s))-J(s,\varphi(s),\xi(s))\rangle_2
            \\
        &
            \quad\ +\langle J^n(s,u^n(s),v^n(s)),\ J(s,\varphi(s),\xi(s))\rangle_2 \\
        &
            \quad\ -\langle \xi(s),\ v^n(s)-\xi(s)\rangle_1 -\langle v^n(s),\ \xi(s)\rangle_1
             -K_1\langle \varphi(s),\ u^n(s)-\varphi(s)\rangle
            \\
        &
            \quad\ -K_1\langle u^n(s),\varphi(s)\rangle  \Big]\ ds
            -\int_t^T2e^{K_1s}\langle u^n(s),\ v^n(s)\ dW_s\rangle ,\ \forall t\in [0,T].
    \end{align*}
Letting $n\rightarrow \infty$, by \eqref{proof of lem pre1} and the lower continuity of weak convergence, we obtain for every nonnegative $\psi\in L^{\infty}([0,T],\bR^+)$,
\begin{equation}\label{proof of lem pre2}
  \begin{split}
    &E\left[ \int_0^T \psi(t)\left(e^{K_1t}\|u(t)\|^2-e^{K_1T}\|G\|^2\right)\ dt  \right]\\
    \leq\ &\liminf_{n\rightarrow\infty}E\left[ \int_0^T
           \psi(t)\left(e^{K_1t}\|u^n(t)\|^2-e^{K_1T}\|G^n\|^2\right)\ dt  \right]\\
    \leq\ &E\bigg[\int_0^T
           \psi(t)\Big(\int_t^Te^{K_1s}\big(
           2\langle \bar{F}(s)-F(s,\varphi(s),\xi(s)),\ \varphi(s)\rangle
           \\
      &\ +2\langle F(s,\varphi(s),\ \xi(s)),\ u(s)\rangle +\langle J(s,\varphi(s),\xi(s)),\ \bar{J}(s)-J(s,\varphi(s),\xi(s))\rangle
      \\
      &\ +\langle \bar{J}(s),\ J(s,\varphi(s),\xi(s))\rangle -\langle \xi(s),\ v(s)-\xi(s)\rangle -\langle v(s),\ \xi(s)\rangle
      \\
      &\ -K_1\langle \varphi(s),\ u(s)-\varphi(s)\rangle -K_1\langle u(s),\ \varphi(s)\rangle \big)\ ds
                    \Big)\ dt\bigg].
  \end{split}
\end{equation}
As
\begin{equation}\label{proof of lem pre3}
  \begin{split}
    &E\left[ e^{K_1t}\|u(t)\|^2-e^{K_1T}\|G\|^2   \right]\\
    =&\
        E\Big[\int_t^Te^{K_1s}\left(2\langle \bar{F}(s),\ u(s)\rangle +\|\bar{J}(s)\|_2^2-\|v\|_1^2
        -K_1\|u(s)\|^2\right)\ ds   \Big],
  \end{split}
\end{equation}
by inserting \eqref{proof of lem pre3} into \eqref{proof of lem pre2} we obtain
\begin{equation}\label{proof of lem pre variation}
  \begin{split}
   E&\bigg[\int_0^T\psi(t)\Big(\int_t^T e^{K_1s}\big[2\langle \bar{F}(s)-F(s,\varphi(s),\xi(s)),\ u(s)-\varphi(s)\rangle -\|v(s)-\xi(s)\|_1^2
    \\
    &+\|\bar{J}(s)-J(s,\varphi(s),\xi(s))\|_2^2
    -K_1\|u(s)-\varphi(s)\|^2    \big]\ ds\Big)\ dt      \bigg]\leq 0.
  \end{split}
\end{equation}
Taking $(\varphi,\xi)=(u,v)$ we obtain $J(\cdot,u(\cdot),v(\cdot))=\bar{J}$. Finally, fist applying \eqref{proof of lem pre variation} to $(\varphi,\xi)=(u-\eps\bar{\phi}h,v)$ for $\eps>0,\bar{\phi}\in L^{\infty}(\Omega\times [0,T],\sF\otimes \cB([0,T]))$ and $h\in V$, then dividing both sides by $\eps$ and letting $\eps\downarrow 0$, by $(\cA 1)$, $(\cA 4)$ and  the dominated convergence theorem,   we obtain
\begin{equation}
  \begin{split}
   E\left[\int_0^T\psi(t)\left( \int_t^Te^{k_1s}\bar{\phi}(s)\langle \bar{F}(s)-F(s,u(s),v(s)),\ h\rangle \ ds  \right)   \ dt\right]\leq 0
  \end{split}
\end{equation}
which, together with the arbitrariness of $\psi,h$ and $\bar{\phi}$, implies $\bar{F}=F(\cdot,u,v)$.

Hence $(u,v)$ is a solution of \eqref{bdsdes} and the uniqueness follows from Lemma \ref{lem uniqueness} and Remark \ref{rmk unique}.
\end{proof}
\begin{rmk}\label{rmk lem in pre variation}
  In view of the proof of Lemma \ref{lem in pre variation}, we can replace the assumption (f) by the following one:
  \begin{align*}
    &~~~~~~~\textrm{ for } dt \textrm{-almost } t\in[0,T],\\
    &
        \lim_{n\rightarrow \infty}E[\|F^n(\omega,t,\varphi,\xi)-F(\omega,t,\varphi,\xi)\|_*^{q'}
        +\|J^n(\omega,t,\varphi,\xi)-J(\omega,t,\varphi,\xi)\|_2^2]=0,
          \\
        &~~~~~~~\textrm{holds for all }(\varphi,\xi)\in V\times L(U_1,H).
  \end{align*}
\end{rmk}

\begin{rmk}\label{rmk-lem-pre-variation}
  Indeed, instead of $(F^n,J^n)$ satisfying $(\mathcal {A}2)$, $(\mathcal {A}4)$ on $[\tau, T]$ and (f) (or Remark \ref{rmk lem in pre variation}), in order to obtain the assertion of Lemma \ref{lem in pre variation}, we need only to find $(\tilde{F}^n,\tilde{F}^n)$ satisfying $(\mathcal {A}2)$, $(\mathcal {A}4)$, (e), (g) and (f) (or Remark \ref{rmk lem in pre variation}) such that
  \begin{align*}
    &2\langle F^n(s,u^n(s),v^n(s)),\ u^n(s)\rangle +\|J^n(s,u^n(s),v^n(s))\|_2^2
    \\
    \leq&
    \,  2\langle \tilde{F}^n(s,u^n(s),v^n(s)),\ u^n(s)\rangle +\|\tilde{J}^n(s,u^n(s),v^n(s))\|_2^2,\ a.e.\,(\omega,s)\in\Omega\times[0,T].
  \end{align*}
   We can verify this claim in a similar way to the proof of Lemma \ref{lem in pre variation}.
\end{rmk}

\section{Proof of Theorem \ref{thm main}}
\subsection{The finite dimensional case}

\begin{thm}\label{thm finite dim}
  let $l,m,n\in\bN$ and $V=H=V'=\bR^n,U_1=\bR^m,U_2=\bR^l$. Then under the assumptions in
  Theorem \ref{thm main} there exists a unique solution pair
  $(u,v)\in (S^p(0,T;H)\cap M^{pq/2,q}(0,T;V))\times M^{p,2}(0,T;L(U_1,H))$ to BDSDES
  \eqref{bdsdes}.
\end{thm}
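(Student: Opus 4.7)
Uniqueness is immediate from Lemma \ref{lem uniqueness}, so the content of the theorem is existence. My plan is to reduce to the classical Lipschitz BDSDE theory of Pardoux--Peng \cite{PadouxPeng94BDSDE} by a smooth approximation, derive uniform $L^p$ a priori estimates on the approximate solutions, and then identify the weak limit as the sought solution by invoking the variational convergence machinery of Lemma \ref{lem in pre variation}.

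\textbf{Approximation and solving.} Since $V = H = V' = \bR^n$, the monotonicity $(\cA 2)$ together with hemicontinuity $(\cA 1)$ forces $F(t,\cdot,\phi):\bR^n\to\bR^n$ to be continuous, which makes standard smoothing available. I approximate $(F,J)$ by a family $(F^k,J^k)_{k\in\bN}$ obtained by first mollifying in the $v$-variable against a kernel of scale $1/k$ and then composing with a smooth truncation onto the ball of radius $k$ in $V$. Since mollification preserves monotonicity and truncation to a ball is compatible with the structural bounds on bounded sets, the approximations $(F^k,J^k)$ are Lipschitz in $(v,\phi)$ and converge pointwise to $(F,J)$, while inheriting variants of $(\cA 2)$--$(\cA 6)$ with constants independent of $k$. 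The classical Pardoux--Peng theorem \cite{PadouxPeng94BDSDE} then supplies a unique solution $(u^k,v^k)\in S^2(0,T;H)\times M^2(0,T;L(U_1,H))$ of the BDSDES with coefficients $(F^k,J^k)$ and terminal data $G$.

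\textbf{Uniform estimates.} The core of the proof is to show that $(u^k,v^k)$ satisfies \eqref{main thm estimate} uniformly in $k$. Applying the It\^o formula of Theorem \ref{thm ito formula} to $e^{\lambda t}\|u^k(t)\|^p$ for a suitable $\lambda$, using coercivity $(\cA 3)$ to generate the dissipation terms $\alpha\|u^k\|_V^q$ and $(1-\delta)\|v^k\|_1^2$, and using $(\cA 6)$ to control the quadratic variation of the backward stochastic integral against $\|u^k\|_V^q$, I expect to absorb the problematic contributions into the coercive terms. The smallness condition $\alpha_1(p-2)<\alpha$ is precisely what makes this absorption possible when $p>2$; for $p=2$ the relevant cross term does not appear and $(\cA 6)$ is not needed. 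Burkholder--Davis--Gundy handles the martingale terms, and Gronwall closes the estimate. The growth bound $(\cA 4)$ then transfers the $L^p$ control to uniform bounds on $F^k(\cdot,u^k,v^k)$ in $M^{q'}(0,T;V')$ and on $J^k(\cdot,u^k,v^k)$ in $M^2(0,T;L(U_2,H))$.

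\textbf{Passage to the limit and main obstacle.} By the reflexivity and weak-$*$ compactness from Lemma \ref{lem in prelimilary}(i), I extract a subsequence along which all the convergences (a)--(g) of Lemma \ref{lem in pre variation} hold; hypothesis (f) is replaced by its $L^{q'}/L^2$-variant from Remark \ref{rmk lem in pre variation}, which follows from the construction of $(F^k,J^k)$ together with the dominated convergence theorem. Lemma \ref{lem in pre variation} then identifies the weak limit $(u,v)$ as the unique solution of \eqref{bdsdes}, and the lower semi-continuity of norms under weak convergence promotes the uniform estimate for $(u^k,v^k)$ to \eqref{main thm estimate} for $(u,v)$. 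The chief difficulty is clearly the uniform estimates step: closing the $L^p$-energy inequality while the backward-martingale bracket, whose growth rate is dictated by $(\cA 6)$, must be absorbed into the coercive dissipation; the identification of the limit is, thanks to Lemma \ref{lem in pre variation}, essentially mechanical once this step is in place.
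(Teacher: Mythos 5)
Your overall architecture (Lipschitz approximation, Pardoux--Peng, uniform $L^p$ estimates via the It\^o formula with $(\cA 3)$ and $(\cA 6)$, then identification of the weak limit via Lemma \ref{lem in pre variation}) is the same as the paper's, and your discussion of the energy estimate and of the role of $\alpha_1(p-2)<\alpha$ is consistent with Lemma \ref{rmk thm finite dim estimate}. The genuine gap is in the construction of the approximating coefficients. Mollification in the $V$-argument does preserve the joint monotonicity $(\cA 2)$ (by Jensen's inequality applied to the $J$-term), but it does not produce a \emph{globally} Lipschitz map, since $(\cA 4)$ permits superlinear growth in $\|v\|_V$; so you cannot yet invoke Pardoux--Peng. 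Your fix --- composing with a truncation onto the ball of radius $k$ --- destroys monotonicity: if $P$ denotes the projection onto a ball and $T$ is monotone, $\langle T(Px)-T(Py),\ x-y\rangle$ need not have a sign (e.g.\ $T$ a rotation by $\pi/2$ in $\bR^2$, which satisfies $\langle Tx-Ty,x-y\rangle=0$, already gives a strictly positive value for suitable $x,y$ outside the ball), and the violation is of first order in $\|x-y\|$, so it cannot be absorbed into the $K_1\|v_1-v_2\|^2$ slack. Since $(\cA 2)$ for the approximations, with constants uniform in $k$, is exactly what Lemma \ref{lem in pre variation} needs to identify $\bar F$ and $\bar J$ with $F(\cdot,u,v)$ and $J(\cdot,u,v)$, the limit-identification step collapses without it. The paper avoids this precisely by using the Yosida approximation $F_\eps=F\circ(I-\eps F(\cdot,\tilde v))^{-1}$, whose resolvent structure yields global Lipschitzness \emph{and} $\langle F_\eps(x)-F_\eps(y),x-y\rangle\le 0$ simultaneously; because the Yosida regularization is performed at frozen $\tilde v$, the $\phi$-dependence of $F$ is then recovered by a separate Picard iteration exploiting the $\delta<1$ contraction in $(\cA 2)$.

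Two secondary omissions: first, Pardoux--Peng is an $L^2$ theory, and for $p>2$ one needs the extension of Lemma \ref{lem finite lipchitz case}, whose hypotheses require $E\bigl[\bigl(\int_0^T\|f(t,0,0)\|\,dt\bigr)^p\bigr]<\infty$; the standing assumption $\|F(\cdot,0,0)\|_*^{q'}\in M^{p/2,1}(0,T)$ does not give this when $q'<2$, which is why the paper's Step 3 truncates $F(\cdot,0,0)$, $J(\cdot,0,0)$ and $G$ at level $N$ and passes to a further weak limit. Second, the reduction to $K_1=0$ (the paper's Step 1) is needed before the contraction argument in the $\phi$-variable closes; your proposal silently assumes it. If you replace the mollify-and-truncate device by the Yosida approximation at frozen $\tilde v$ followed by a fixed-point iteration in $\tilde v$, and add the data truncation, your proof becomes the paper's.
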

Before the proof of Theorem \ref{thm finite dim}, we show the following lemma which gives the estimates to the solution pair $(u,v)$ of BDSDES \eqref{bdsdes} in Theorem \ref{thm finite dim}.
\begin{lem}\label{rmk thm finite dim estimate}
	Under the assumptions in Theorem \ref{thm finite dim}, if $$(u,v)\in \left(S^p(0,T;H)\cap M^{pq/2,p}(0,T;V)\right)\times M^{p,2}(0,T;L(U_1,H))$$ is a solution to the equation \eqref{bdsdes-integral},
     there holds the following estimate
  \begin{equation}\label{estimate finite dim}
    \begin{split}
      &\|u\|_{S^{p}(0,T;H)}+\|u\|_{M^q(0,T;V)}
  +\|u\|^{q/2}_{M^{pq/2,q}(0,T;V))}+\|v\|_{M^{p,2}(0,T;L(U_1,H))}\\
  &\leq C \left\{\right \|G\|_{L^p(\Omega,\sF_T,H)}+\|\varsigma\|^{1/2}_{M^{p/2,1}(0,T)}  \},
    \end{split}
  \end{equation}
  where $C$ is a nonnegative constant depending on $T,K,q,p,\delta,\alpha$ and
  $\alpha_1$.
\end{lem}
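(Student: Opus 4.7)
My plan is to apply It\^o's formula to $\|u(t)\|^p$---Theorem~\ref{thm ito formula} for $p=2$ and the classical finite-dimensional It\^o formula for $\Psi(x)=\|x\|^p$ when $p>2$---combine it with the coercivity $(\mathcal{A}3)$ and the structural bound $(\mathcal{A}6)$, and close the resulting inequality through Young's inequality, Gronwall, and BDG. The bound on $v$ will then come from inverting the equation and applying BDG once more.

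For the $p=2$ case, Theorem~\ref{thm ito formula} applied with $(f,h)=(F(\cdot,u,v),J(\cdot,u,v))$ (whose required integrability is guaranteed by $(\mathcal{A}4)$) together with $(\mathcal{A}3)$ immediately gives
$$\|u(t)\|^2 + \int_t^T\bigl(\alpha\|u\|_V^q + (1-\delta)\|v\|_1^2\bigr)\,ds \leq \|G\|^2 + \int_t^T\bigl(K\|u\|^2 + \varsigma\bigr)\,ds + \widehat M(t),$$
where $\widehat M$ collects the two stochastic integrals; standard expectation--Gronwall--BDG arguments then close this case. For $p>2$ I would apply the classical It\^o formula to $\Psi(x)=\|x\|^p$; the backward-It\^o second-order term contributes an extra drift $\tfrac{p(p-2)}{2}\|u\|^{p-4}\bigl(u^\top JJ^\top u - u^\top vv^\top u\bigr)$, which by $(\mathcal{A}6)$ is dominated by $\tfrac{p(p-2)}{2}\|u\|^{p-2}\bigl[\alpha_1\|u\|_V^q + K(\|J(t,0,0)\|_2^2 + \|u\|^2)\bigr]$. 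Adding $\tfrac{p}{2}\|u\|^{p-2}$ times the inequality from $(\mathcal{A}3)$, the coefficient of $\|u\|^{p-2}\|u\|_V^q$ reduces to $-\tfrac{p}{2}\bigl(\alpha-(p-2)\alpha_1\bigr)$, which is strictly negative \emph{precisely} by the hypothesis $\alpha_1(p-2)<\alpha$. Taking expectations, Young-absorbing the mixed lower-order terms $\|u\|^{p-2}\varsigma$ and $\|u\|^{p-2}\|J(t,0,0)\|_2^2$ (the latter bounded by $\varsigma$ via Remark~2.1) against $\eps\|u\|^p + C_\eps\varsigma^{p/2}$, and Gronwall in time, I expect to obtain
$$\sup_{t\in[0,T]} E\|u(t)\|^p + E\!\int_0^T \|u\|^{p-2}\bigl(\|u\|_V^q + \|v\|_1^2\bigr)\,ds \leq C\bigl(\|G\|_{L^p}^p + \|\varsigma\|_{M^{p/2,1}}^{p/2}\bigr),$$
and a H\"older split with exponents $p/(p-2)$ and $p/2$ will convert the middle piece into the target $\|u\|_{M^{pq/2,q}(0,T;V)}^{pq/2}$-bound.

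To upgrade to the $S^p(0,T;H)$-norm I would apply BDG to the martingale part of the $\|u\|^p$-formula (whose quadratic variation is dominated by $\int_0^T\|u\|^{2p-2}(\|J\|_2^2+\|v\|_1^2)\,ds$), Young-absorb half of $E\sup_t\|u\|^p$ into the left-hand side, and employ a standard localization $\tau_n=\inf\{t:\|u(t)\|\geq n\}$ to legitimize the absorption. For the $M^{p,2}$-estimate on $v$, I would rewrite $\int_0^T v\,dW_s = u(0)-G-\int_0^T F\,ds-\int_0^T J\,d\overleftarrow B_s$, apply BDG to the left-hand side, and bound the right via $(\mathcal{A}4)$---the condition $p\geq\beta+2$ is precisely what reconciles the growth factor $1+\|u\|^\beta$ with the already-established $L^p$-bounds on $u$---together with the estimates just proved. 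The main obstacle is the second step: absorbing the indefinite-sign correction $u^\top JJ^\top u - u^\top vv^\top u$ is exactly the raison d'\^etre of assumption $(\mathcal{A}6)$ and of the strict inequality $\alpha_1(p-2)<\alpha$, and once that coefficient is made strictly negative the remaining arguments are routine.
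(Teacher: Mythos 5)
Your first half is essentially the paper's argument: It\^o's formula for $\|u(t)\|^p$, the backward/forward second-order corrections combined via $(\cA 6)$ so that the coefficient of $\|u\|^{p-2}\|u\|_V^q$ becomes $-\tfrac{p}{2}(\alpha-(p-2)\alpha_1)<0$, then $(\cA 3)$, BDG on both stochastic integrals, Young and Gronwall, yielding exactly the paper's intermediate estimate $E[\sup_t\|u(t)\|^p]+E\int_0^T\|u\|^{p-2}(\|v\|_1^2+\|u\|_V^q)\,ds\leq C\{E\|G\|^p+(\int_0^T\varsigma)^{p/2}\}$. That part is sound. The problems are in how you propose to extract the remaining two norms from it. First, the claimed ``H\"older split with exponents $p/(p-2)$ and $p/2$'' does not convert $E\int_0^T\|u\|^{p-2}\|u\|_V^q\,ds$ into a bound on $E[(\int_0^T\|u\|_V^q\,ds)^{p/2}]$: H\"older runs in the wrong direction here (any factorization leaves a negative power of $\|u\|$ in one factor), and on the event where $\|u\|$ is small the weight $\|u\|^{p-2}$ carries no information about $\|u\|_V$. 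So the $M^{pq/2,q}(0,T;V)$-bound does not follow from the weighted estimate.

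Second, and more seriously, your route to the $M^{p,2}$-bound on $v$ --- inverting the equation and applying BDG to $\int_t^T v\,dW_s$ --- requires an $L^p(\Omega;L^1(0,T))$-bound on $\|F(\cdot,u,v)\|_*$. Assumption $(\cA 4)$ only gives $\|F\|_*^{q'}\leq[\varsigma+K(\|u\|_V^q+\|u\|^2+\|v\|_1^2)](1+\|u\|^\beta)$, so after H\"older in $t$ you would need the $p/q'$-th moment of $\int_0^T(\|u\|_V^q+\|v\|_1^2)(1+\|u\|^\beta)\,ds$; this is circular in $v$ (the quantity being estimated reappears on the right with no small factor to absorb it) and, when $q>2$ so that $p/q'>p/2$, it demands moments of $\int_0^T\|u\|_V^q\,ds$ strictly higher than anything available. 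The paper closes both gaps at once by a device you did not use: apply the It\^o formula to $\|u(t)\|^2$ (so that $F$ enters only through $\langle F(s,u,v),u\rangle$, which coercivity $(\cA 3)$ converts into $-\alpha\int_t^T\|u\|_V^q\,ds$ and $-(1-\delta)\int_t^T\|v\|_1^2\,ds$ on the left at full strength, with no use of the growth of $F$ at all), then take the $L^{p/2}(\Omega)$-norm of the resulting identity, control the two stochastic integrals by BDG, and absorb via Young against the already-established $S^p(0,T;H)$-bound; this is the paper's inequality \eqref{proof of rmk finite estimate1}, which combined with \eqref{eq 1 in lem finite esti} gives \eqref{estimate finite dim}. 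You should replace your inversion step and the H\"older split by this second application of the It\^o formula.
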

\begin{proof}
  By It\^o formula, we have
    \begin{align*}
        &\|u(t)\|^p+\frac{p}{2}\int_t^T\!\! \|u(s)\|^{p-2}\|v(s)\|_1^2\ ds+\frac{p(p-2)}{2}\int_t^T
            \!\!\|u(s)\|^{p-4}\langle u(s) ,\ v(s)v^*(s)u(s)\rangle  ds\\
        =
        &\
            \|G\|^p+p\int_t^T\|u(s)\|^{p-2}\langle F(s,u(s),v(s)),\ u(s)\rangle \ ds\\
        &\
            +p\int_t^T\|u(s)\|^{p-2}\langle u(s),\ J(s,u(s),v(s))d\overleftarrow{B}_s\rangle
            -p\int_t^T\|u(s)\|^{p-2}\langle u(s),\ v(s)dW_s\rangle \\
        &\
            +\frac{p(p-2)}{2}\int_t^T \|u(s)\|^{p-4}\langle u(s),\ J(s,u(s),v(s))J(s,u(s),v(s))^*u(s)\rangle \ ds
            \\
        &\
            +\frac{p}{2}\int_t^T \|u(s)\|^{p-2}\|J(s,u(s),v(s))\|_2^2\ ds \\
      \leq
        &\
            \|G\|^p
            +p\int_t^T\!\!\!\|u(s)\|^{p-2}\!\langle u(s),\ J(s,u(s),v(s))d\overleftarrow{B}_s\rangle
            -p\int_t^T\!\!\!\|u(s)\|^{p-2}\!\langle u(s),\ v(s)dW_s\rangle \\
        &\
            +\frac{p(p-2)}{2}\int_t^T\!\!\Big[\|u(s)\|^{p-2}\left(
            K\|u(s)\|^2+K\|J(s,0,0)\|_2^2+\alpha_1\|u(s)\|_V^q\right)
                          +\|u\|^{p-4}\langle u(s) , \\
        &\
            \ \quad v(s)v(s)^*u(s)\rangle \! \Big]\, ds
            +\frac{p}{2}\int_t^T\|u(s)\|^{p-2}\left(
            K\|u(s)\|^2+\delta\|v(s)\|_1^2
            +\varsigma(s)-\alpha\|u(s)\|_V^q \right)\ ds\\
      \leq
        &\
            \|G\|^p+p\int_t^T\!\!\!\|u(s)\|^{p-2}\!\langle u(s),\ J(s,u(s),v(s))d\overleftarrow{B}_s\rangle
            -p\int_t^T\!\!\!\|u(s)\|^{p-2}\!\langle u(s),\ v(s)dW_s\rangle \\
        &\
            +\frac{p\delta}{2}\int_t^T\!|!\!\|u(s)\|^{p-2}\|v(s)\|_1^2\ ds
            -\frac{p(\alpha-(p-2)\alpha_1)}{2}\int_t^T\!\!\!\|u(s)\|^{p-2}\|u(s)\|_V^q
            \\
          &+C\left[\left(\int_0^T\varsigma(s)\ ds\right)^{p/2}+\int_t^T\|u(s)\|^p\ ds\right]+\frac{1}{4}\sup_{s\in[t,T]}\|u(s)\|^p\\
        &\
          +\frac{p(p-2)}{2}\int_t^T \|u(s)\|^{p-4}\langle u(s) ,\ v(s)v(s)^*u(s)\rangle \ ds,\ t\in[0,T]
    \end{align*}
    which together with the following
    \begin{align*}
        &
            E\bigg[\sup_{t\in [\tau,T]}\Big(\left| \int_t^T\!\!\!\|u(s)\|^{p-2}\!\langle u(s),
            \ J(s,u(s),v(s))d\overleftarrow{B}_s\rangle  \right|
        \\
        &
            \quad\ \ \ ~~~~~~~~+\left| \int_t^T\!\!\!\|u(s)\|^{p-2}\!\langle u(s),\  v(s)dW_s\rangle  \right|\Big)\bigg]\\
    \leq&\
            CE\bigg[\left| \int_{\tau}^T\!\!\|u(s)\|^{2p-2}(\varsigma(s)+\|u(s)\|^2+\|u(s)\|_V^q+\|v(s)\|_1^2)\ ds \right|^{1/2}\\
        &\
            +\left| \int_{\tau}^T\!\!\!\|u(s)\|^{2p-2}\|v(s)\|^2\ ds \right|^{1/2}\bigg] \\
    \leq &\
            \eps_1 E\left[ \sup_{t\in [\tau,T]}\|u(t)\|^p \right]
            +C(\eps_1,K,T)E\bigg[\int_{\tau}^T\!\!\!
            \|u(s)\|^p\ ds+\left(\int_{\tau}^T\!\!\!
            \varsigma(s)
            \ ds\right)^{p/2} \\
        &
            +\int_{\tau}^T
            \|u(s)\|^{p-2}\|v(s)\|_1^2\ ds
            +\int_{\tau}^T
            \|u(s)\|^{p-2}\|u(s)\|_V^q\ ds  \bigg],\ \tau\in[0,T]\\
    \end{align*}
    implies
      by Gronwall inequality and Young inequality  that
    \begin{equation}\label{eq 1 in lem finite esti}
          \begin{split}
        &E\left[\sup_{t\in [0,T]}\|u(t)\|^p\right]
        +E\left[\int_0^T\|u(s)\|^{p-2}(\|v(s)\|_1^2+\|u(s)\|_V^q)\ ds \right]\\
        \leq&\
        C \left\{E[\|G\|^p]+\left(\int_0^T\varsigma(s)\ ds\right)^{p/2}\right\}.
      \end{split}
    \end{equation}

     By It\^o formula, we have
    \begin{align*}
        &\|u(t)\|^2+\int_t^T\|v(s)\|_1^2\ ds\\
      =&\
            \|G\|^2+2\!\!\int_t^T\!\!\!\langle F(s,u(s),v(s)),\ u(s)\!\rangle \ ds
            +2\!\!\int_t^T\!\!\!\langle u(s),\ J(s,u(s),v(s))d\overleftarrow{B}_s \rangle \\
      &\
            -2\int_t^T\langle u(s),\ v(s)dW_s\rangle
            +\int_t^T \|J(s,u(s),v(s))\|_2^2\ ds\\
      \leq&\
            \|G\|^2+2\!\!\int_t^T\!\!\!\langle u(s),\ J(s,u(s),v(s))d\overleftarrow{B}_s \rangle
            -2\!\!\int_t^T\!\!\!\langle u(s),\ v(s)dW_s \rangle \\
      &\
            +K\!\!\int_t^T\!\!\!\!\|u(s)\|^2\ ds+\delta\!\!\int_t^T\!\!\!\!\|v(s)\|_1^2\ ds-\alpha\!\!\int_t^T\!\!\!\!\|u(s)\|_V^q\ ds
            +\int_0^T\!\!\!\!\varsigma(s)\ ds,~t\in[0,T].
    \end{align*}
    Taking $L^{p/2}(\Omega,\sF)$-norm on both sides and noticing that
    \begin{align*}
        &
            E\left[ \left|\int_{t}^T\!\!\!\! \langle u(s),\ v(s)dW_s \rangle \right|^{p/2}
            +\left|\int_{t}^T\!\!\!\! \langle u(s),\ J(s,u(s),v(s))d\overleftarrow{B}_s \rangle \right|^{p/2}  \right]
            \\
        \leq&\
            CE\left[  \left(\int_{t}^T\!\!\!\!\|u(s)\|^2\|v(s)\|_1^2\ ds\right)^{p/4}+
            \left(\int_{t}^T\!\!\!\!\|u(s)\|^2\|J(s,u(s),v(s))\|_1^2\ ds\right)^{p/4} \right]\\
        \leq&\
            \eps_2\left(\|v\|_{M^{p,2}(t,T;L(U_1,H)}^{p} +\|u\|_{M^{pq/2,q}(t,T;V)}^{pq/2}  \right)\\
        &\
            +C(\eps_2,p)\Big\{\|u\|_{S^p(0,T;H)}^p+\|\varsigma\|^{p/2}_{M^{p/2,1}(0,T)}
            \Big\},
    \end{align*}
    by the Young inequality and letting $\eps_2$ be small enough, we obtain
    \begin{equation}\label{proof of rmk finite estimate1}
      \begin{split}
      &\|v\|_{M^{p,2}(0,T;L(U_1,H))}+\|u\|^{q/2}_{M^{pq/2,q}(0,T;V)}          \\
      \leq &\ C \left\{\|u\|_{S^p(0,T;H)}
        +\|\varsigma\|^{1/2}_{M^{p/2,1}(0,T)}
       +\|G\|_{L^p(\Omega,\sF_T,H)}  \right\},
      \end{split}
    \end{equation}
which together with \eqref{eq 1 in lem finite esti} implies our estimate \eqref{estimate finite dim}. We complete the proof.
\end{proof}


To prove Theorem \ref{thm finite dim}, we need the flowing lemma which can be viewed as a
corollary of  \cite[Theorem 1.4]{PadouxPeng94BDSDE}. It is very likely
that this result has already appeared somewhere, but we have not seen it,
so we provide a proof here for the reader's convenience.

\begin{lem}\label{lem finite lipchitz case}
   Let $p\geq 2$, $H=V=V'=\bR^n$, $U_1=\bR^m$ and $U_2=\bR^l$, $n,m,l\in \bN$.
   Assume that $(f,h)$ satisfies $(\cA 2)$ and $(\cA 5)$, and that for all $(\omega,t)\in \Omega\times[0,T]$, $z\in \bR^{n\times m}$, $y_1,y_2\in \bR^n$,
  \begin{equation}\label{lipthitz wrt y lem finite-dim}
    \|f(\omega,t,y_1,z)-f(\omega,t,y_2,z)\|\leq K\|y_1-y_2\|,
  \end{equation}
  where the constant $K$ comes from assumption  $(\cA 5)$. Moreover, if $p>2$, we suppose $(\cA 6)$ holds for the pair $(f,h)$.
  Let $\xi\in L^p(\Omega,\sF_T,\bR^n)$
   and
  $$ E\left[ \left(\int_0^T\|f(t,0,0)\|\ dt\right)^p+\left(\int_0^T\|h(t,0,0)\|_2^2\ dt \right)^{p/2} \right]<\infty.  $$
  Then the backward doubly stochastic differential equation (BDSDE, for short)
  \begin{equation}\label{bdsde in lem finite dim}
    Y_t=\xi+\int_t^Tf(s,Y_s,Z_s)\ ds+\int_t^Th(s,Y_s,Z_s)\ d\overleftarrow{B}_s-\int_t^T Z_s\ dW_s,~~t\in[0,T]
  \end{equation}
  admits a unique solution $(Y,Z)\in S^p(0,T;\bR^n) \times M^{p,2}(0,T;\bR^{n\times m})$ such that
      \begin{equation}\label{estimate in lem finite dim}
      \begin{split}
        &E\left[\sup_{t\in [0,T]}\|Y(t)\|^p+\left(\int_0^T\|Z_s\|_1^2\ ds\right)^{p/2}\right]\\
        \leq&\
        C \left\{E[\|\xi\|^p]+\left(\int_0^T\|f(s,0,0)\|\ ds\right)^p+\left(\int_0^T \|h(s,0,0)\|_2^2 \ ds\right)^{p/2}  \right\}
      \end{split}
    \end{equation}
    where $C$ is a constant depending on $T,p,K,\delta,\alpha$ and $\alpha_1$.
\end{lem}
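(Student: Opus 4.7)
My plan is to deduce existence from Pardoux and Peng's $L^2$ result \cite[Theorem 1.4]{PadouxPeng94BDSDE} and then upgrade to $L^p$ via an a priori estimate together with a truncation argument. Under $(\mathcal{A}5)$ combined with the extra condition \eqref{lipthitz wrt y lem finite-dim}, the driver $(f,h)$ is Lipschitz in $(y,z)$ uniformly in $(\omega,t)$ (noting that $(\mathcal{A}2)$ evaluated at $v_1=v_2$ furnishes the $\sqrt{\delta}$-Lipschitz dependence of $h$ in $z$); hence for $\xi\in L^2(\Omega,\sF_T,\bR^n)$ and square-integrable $(f(\cdot,0,0),h(\cdot,0,0))$, \cite[Theorem 1.4]{PadouxPeng94BDSDE} produces a unique solution $(Y,Z)\in S^2(0,T;\bR^n)\times M^{2,2}(0,T;\bR^{n\times m})$. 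Uniqueness within the $L^p$-class for any $p\ge 2$ is a direct consequence of the calculation in Lemma \ref{lem uniqueness}, which only uses $(\mathcal{A}2)$ (cf.~Remark \ref{rmk unique}), so that only existence and the estimate \eqref{estimate in lem finite dim} remain.

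The key step is the $L^p$ a priori estimate, which I derive along the lines of the proof of Lemma \ref{rmk thm finite dim estimate}. Applying It\^o's formula of Theorem \ref{thm ito formula} and the standard chain rule to $\|Y_s\|^p$, the drift integrand has the form
\[
p\|Y\|^{p-2}\langle f(s,Y,Z),Y\rangle+\tfrac{p}{2}\|Y\|^{p-2}\bigl(\|h(s,Y,Z)\|_2^2-\|Z\|_1^2\bigr)+\tfrac{p(p-2)}{2}\|Y\|^{p-4}\bigl(\langle Y,hh^*Y\rangle-\langle Y,ZZ^*Y\rangle\bigr).
\]
Using $(\mathcal{A}2)$ against $(y_2,z_2)=(0,0)$, together with $(\mathcal{A}5)$, \eqref{lipthitz wrt y lem finite-dim}, and Cauchy--Schwarz/Young, bounds $2\langle f(s,Y,Z),Y\rangle+\|h(s,Y,Z)\|_2^2$ by $C_1\|Y\|^2+\delta'\|Z\|_1^2+C_2(\|f(s,0,0)\|\|Y\|+\|h(s,0,0)\|_2^2)$ with $\delta'<1$. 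For $p>2$, the positive quartic cross term with $hh^*$ is controlled via $(\mathcal{A}6)$, which furnishes the operator majorization $hh^*\le ZZ^*+K(\|h(s,0,0)\|_2^2+\|Y\|^2)I+\alpha_1\|Y\|^2\,I$ (recalling $\|Y\|_V^q\wedge\|Y\|_V^2\le\|Y\|^2$ in the finite-dimensional setting). The $ZZ^*$ contribution then cancels the negative $\langle Y,ZZ^*Y\rangle$ cross term coming from the $Z\,dW$ integral, and only $\|Y\|^p$- and $\|h(s,0,0)\|_2^2\|Y\|^{p-2}$-type remainders survive. A BDG inequality on the two stochastic integrals (the backward It\^o one after time reversal), Young's inequality to absorb the $\eps\sup_{s\in[t,T]}\|Y_s\|^p$ produced on the right, and Gronwall then deliver \eqref{estimate in lem finite dim}.

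Finally, I pass from $L^2$ to $L^p$ terminal data by truncation. Set $\xi^N:=\xi\,\mathbf{1}_{\{\|\xi\|\le N\}}\in L^\infty\subset L^2$ and obtain $(Y^N,Z^N)\in S^2\times M^{2,2}$ from the $L^2$ theory. The pair $(Y^N-Y^M,Z^N-Z^M)$ solves a BDSDE whose driver still satisfies $(\mathcal{A}2)$, $(\mathcal{A}5)$, \eqref{lipthitz wrt y lem finite-dim} (and $(\mathcal{A}6)$ with the same constants) with terminal value $\xi^N-\xi^M$ and no $(f(\cdot,0,0),h(\cdot,0,0))$-type forcing; applying the a priori estimate of the previous paragraph to this difference gives
\[
\|Y^N-Y^M\|_{S^p(0,T;\bR^n)}+\|Z^N-Z^M\|_{M^{p,2}(0,T;\bR^{n\times m})}\le C\,\|\xi^N-\xi^M\|_{L^p(\Omega,\sF_T,\bR^n)}\longrightarrow 0.
\]
Thus $(Y^N,Z^N)$ is Cauchy in $S^p\times M^{p,2}$, and its limit $(Y,Z)$ is the desired solution of \eqref{bdsde in lem finite dim}, inheriting \eqref{estimate in lem finite dim} by passage to the limit.

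The main obstacle is the $p>2$ case of the a priori $L^p$ bound: the positive backward-It\^o contributions $\tfrac{p}{2}\|Y\|^{p-2}\|h\|_2^2$ and $\tfrac{p(p-2)}{2}\|Y\|^{p-4}\langle Y,hh^*Y\rangle$ have to be dominated by the corresponding negative $Z\,dW$-contributions, and the precise purpose of $(\mathcal{A}6)$ is to furnish the operator-valued majorization that makes this cancellation work; once this point is handled, the rest of the argument is routine BDG--Young--Gronwall bookkeeping together with a standard Cauchy-sequence approximation.
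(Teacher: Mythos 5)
Your overall architecture (an a priori $L^p$ estimate in the spirit of Lemma \ref{rmk thm finite dim estimate}, uniqueness from $(\mathcal{A}2)$ as in Lemma \ref{lem uniqueness}, existence by invoking \cite[Theorem 1.4]{PadouxPeng94BDSDE} plus a truncation/limit argument) matches the paper's, and your treatment of the $p>2$ case via $(\mathcal{A}6)$ is exactly the intended use of that hypothesis. But there is a concrete gap in your existence step: you truncate the wrong object. The lemma only assumes $E\bigl[\bigl(\int_0^T\|f(t,0,0)\|\,dt\bigr)^p\bigr]<\infty$, i.e.\ $\|f(\cdot,0,0)\|$ lies in $L^p(\Omega;L^1(0,T))$; this does \emph{not} give the square-integrability in $(\omega,t)$ (nor the $L^p(\Omega;L^2(0,T))$ integrability) that \cite[Theorem 1.4]{PadouxPeng94BDSDE} requires of the zero-order term, and your phrase ``for square-integrable $(f(\cdot,0,0),h(\cdot,0,0))$'' silently assumes a hypothesis you do not have. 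Truncating $\xi$ does nothing to repair this. The paper's proof truncates $f(\cdot,0,0)$ itself, setting $f^N:=f-f(\cdot,0,0)+f(\cdot,0,0)1_{\{\|f(\cdot,0,0)\|\le N\}}$ so that the forcing becomes bounded and Pardoux--Peng applies at the $L^p$ level; it then shows $(Y^N,Z^N)$ is Cauchy in $S^2\times M^2$ with increment controlled by $\|f(\cdot,0,0)1_{\{\|f(\cdot,0,0)\|\in[N,N']\}}\|_{M^{2,1}}$, and upgrades the limit to $S^p\times M^{p,2}$ using the uniform bounds. Without this (or an equivalent) regularization of $f(\cdot,0,0)$, your appeal to the $L^2$ theory does not get off the ground, even in the base case $p=2$.

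A secondary, fixable weak point: you apply the $L^p$ a priori estimate to solutions $(Y^N,Z^N)$ that your construction only places in $S^2\times M^{2,2}$. An ``a priori'' computation with $\|Y\|^p$, $p>2$, presupposes enough integrability for the It\^o expansion and the BDG step to make sense, and the usual stopping-time localization is delicate here because $\{\sF_t\}$ is not a filtration. The paper avoids this by obtaining $(Y^N,Z^N)$ directly in $S^p\times M^{p,2}$ from \cite[Theorem 1.4]{PadouxPeng94BDSDE} applied to the bounded-truncated data. You should either do the same or justify the bootstrap explicitly.
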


\begin{proof}
\tbf{Step 1}.
In a similar way to  the proof of Lemma \ref{rmk thm finite dim estimate}, we prove our estimate \eqref{estimate in lem finite dim}.
Indeed, the only difference lies in the fact that, by $(\cA 2)$ we have
\begin{equation*}
  \begin{split}
        \langle Y_s,\ f(s,Y_s,Z_s)\rangle +\|h(s,Y_s,Z_s)\|_2^2
    \leq
        K_1\|Y_s\|^2+\delta\|Z_s\|_1^2+\|Y_s\|\|f(s,0,0)\|+\|h(s,0,0)\|^2_2
  \end{split}
\end{equation*}
instead of the assumption $(\cA 3)$ on the pair $(f,h)$.

    \tbf{Step 2}. In a similar way to Lemma \ref{lem uniqueness}, we prove the uniqueness.
%

    \tbf{Step 3}. We prove the existence of the solution.

  Let
  \begin{equation*}
    \begin{split}
      &f^{N}(t,y,z)=f(t,y,z)-f(t,0,0)+f(t,0,0)1_{\{\|f(t,0,0)\|\leq N\}}, ~~ (t,y,z)\in [0,T]\times\bR^n\times\bR^{n\times m}.
    \end{split}
  \end{equation*}
Then by \cite[Theorem 1.4]{PadouxPeng94BDSDE}, there exists a unique solution $(Y^N,Z^N)\in S^p(0,T;\bR^n)\times M^{p,2}(0,T;\bR^{n\times m})$ to BDSDE \eqref{bdsde in lem finite dim} with $f$ replaced by $f^N$. Let $N,N'\in \bN$ and $N'>N$. Then through a similar procedure to \tbf{Step 1}., we obtain
\begin{equation}\label{proof of lem finite dim lem esti}
  \begin{split}
    &\sup_{N\in\bN}\left\{\|Y^N\|_{S^p(0,T;\bR^n)}+\|Y^N\|_{M^p(0,T;\bR^n)}+\|Z^N\|_{M^{p,2}(0,T;\bR^{n\times m})}\right\}\leq C,\\
    &\|Y^N-Y^{N'}\|_{S^2(0,T;\bR^n)}+\|Z^N-Z^{N'}\|_{M^{2}(0,T;\bR^{n\times m})}      \\
    \leq&\  C \|f(\cdot,0,0)1_{\{ \|f(\cdot,0,0)\in [N,N']\| \}}\|_{M^{2,1}(0,T;\bR^n)}
    \longrightarrow 0   \textrm{ as }N,N'\rightarrow \infty
  \end{split}
\end{equation}
with the constant $C$ independent of $N$.

Thus,
$(Y^N,Z^N)_{N\in \bN}$ is a Cauchy sequence in $S^2(0,T;\bR^n)\times M^{2}(0,T;\bR^{n\times m})$. Denote  the limit by $(Y,Z)$. From the Lipchitz continuity of $(f(\cdot,y,z),g(\cdot,y,z))$ with respect to $(y,z)$, it follows that
\begin{equation*}
  \begin{split}
        \|f^N(\cdot,Y^N_{\cdot},Z^N_{\cdot})
        -f(\cdot,Y_{\cdot},Z_{\cdot})\|_{M^{2,1}(0,T;\bR^n)}
        +\|h(\cdot,Y^N_{\cdot},Z^N_{\cdot})
        -h(\cdot,Y_{\cdot},Z_{\cdot})\|_{M^{2}(0,T;\bR^{n\times l})} \rightarrow 0.
  \end{split}
\end{equation*}
Hence, $(Y,Z)$ is a solution of BDSDE \eqref{bdsde in lem finite dim} in $S^2(0,T;\bR^n)\times M^{2}(0,T;\bR^{n\times m})$.

 On the other hand, in view of the first equation in \eqref{proof of lem finite dim lem esti}, we have $$(Y,Z)\in \left(L^p(\Omega,L^{\infty}([0,T],\bR^n))\cap S^2(0,T;\bR^n)\right)\times M^{p,2}(0,T;\bR^{n\times m}),$$
 which implies that $(Y,Z)$ lies in $S^p(0,T;\bR^n)\times M^{p,2}(0,T;\bR^{n\times m})$.
We complete the proof.
\end{proof}

\begin{proof}[Proof of Theorem \ref{thm finite dim}]

  \tbf{Step 1}.
  First of all, let us reduce assumption $(\cA 2)$ to the case of $K_1=0$.
  Assume $(u,v)$ is a solution to Equation \eqref{bdsdes-integral} and set
  \begin{equation*}
    \begin{split}
        &\theta(t):=e^{\frac{tK_1}{2}},                                 \\
        &\bar{u}(\omega,t):=\theta(t)u(\omega,t),                        \\
        &\bar{v}(\omega,t):=\theta(t)v(\omega,t),                        \\
        &\bar{J}(\omega,t,\bar{u}(t),\bar{v}(t)):=
        \theta(t)J(\omega,t,\theta(t)^{-1}\bar{u}(t),\theta(t)^{-1}\bar{v}(t)),                                           \\
        &\bar{F}(\omega,t,\bar{u}(t),\bar{v}(t)):=
        \theta(t)F(\omega,t,\theta(t)^{-1}\bar{u}(t),\theta(t)^{-1}\bar{v}(t))
        -\frac{1}{2}K_1\bar{u}(t).
    \end{split}
  \end{equation*}
    Then through  careful computations, we check that the pair
    $(\bar{F},\bar{J})$ also satisfies the same assumptions given to the pair $(F,J)$ only with the
    constant $K$ ($K_1$, respectively)
     replaced by another nonnegative constant $\bar{K}$ (0, respectively).
     Hence, we may assume $K_1=0$ in the following proof.

     \tbf{Step 2}.
     Take $r\geq p\vee(q+\beta)\vee \frac{pq}{2}$.  Assume further that $\|F(\cdot,0,0)\|_*\in L^r(\Omega,L^1([0,T],\bR))$
     , $\|J(\cdot,0,0)\|_2\in M^{r,2}(0,T;\bR) $ and $G\in L^r(\Omega,\sF_T,\bR^n)$.
    Fix $\tilde{v}\in M^{r,2}(0,T;\bR^n\times\bR^m)$.
     Consider the
    following backward doubly stochastic differential equation:
    \begin{equation}\label{proof of lem finite dim bdsde1}
      \begin{split}
        u(t)=G+\int_t^TF(s,u(s),\tilde{v}(s))\ ds+\int_t^TJ(s,u(s),v(s))\ d\overleftarrow{B}_s-\int_t^Tv(s)\ dW_s.
      \end{split}
    \end{equation}
    In this case, for every $(\omega,t)\in \Omega\times[0,T]$,
     $x\mapsto F(\omega,t,x,\tilde{v}(\omega,t))$ is a continuous monotone
     function on $\bR^n$. Let $F_{\eps}(\omega,t,\cdot)$ ($\eps>0$)  be the
     Yosida approximation of $F(\omega,t,\cdot,\tilde{v}(\omega,t))$, i.e.
    \begin{equation}
      \begin{split}
        &F_{\eps}(\omega,t,x):=\eps^{-1}(H_{\eps}(\omega,t,x)-x)
        =F(\omega,t,H_{\eps}(\omega,t,x))                \\
        &H_{\eps}(\omega,t,x):=(I-\eps
        F(\omega,t,\cdot,\tilde{v}(\omega,t)))^{-1}(x).
      \end{split}
    \end{equation}
    Then we conclude (c.f. \cite{NonlinearFuncAna}) that
     $x\mapsto H_{\eps}(\omega,t,x)$ is a homeomorphism on $\bR^n$ for
    each $(\omega,t)$ and that for any $x,y\in\bR^n$
    \begin{equation*}
    \begin{split}
      &\textrm{(a) }\|F_{\eps}(\omega,t,x)-F_{\eps}(\omega,t,y)\|\leq
        2\eps^{-1}\|x-y\|;                                                \\
      &\textrm{(b) }\|F_{\eps}(\omega,t,x)\|\leq
        \|F(\omega,t,x,\tilde{v}(\omega,t))\|;                            \\
      &\textrm{(c) }\langle F_{\eps}(\omega,t,x)-F_{\eps}(\omega,t,y),\ x-y\rangle \ \leq 0;\\
      &\textrm{(d) }\lim_{\eps\downarrow 0}
      \|F_{\eps}(\omega,t,x)- F(\omega,t,x,\tilde{v}(\omega,t))\|=0.
    \end{split}
    \end{equation*}
    It follows from (b), (c) and $(\cA 5)$ that for any $x\in \bR^n$
    \begin{equation}\label{proof of thm finite eps-coercivity}
        \begin{split}
      \langle  F_{\eps}(\omega,t,x),\ x\rangle \ \leq \|x\|\cdot\|F_{\eps}(\omega,t,0)\|
      \leq \left(\|F(\omega,t,0,0)\|+K\|\tilde{v}(\omega,t)\|_1\right)\cdot \|x\|.
        \end{split}
    \end{equation}

    By Lemma \ref{lem finite lipchitz case}, there exists a unique solution $(u^{\eps},v^{\eps})\in S^r(0,T;\bR^n)\times M^{r,2}(0,T;\bR^{n\times m})$ to the the following BDSDE:
    \begin{equation}\label{proof of thm finite case eps-eq}
      \begin{split}
        u^{\eps}(t)=G+\int_t^T\!\!\!\!F_{\eps}(s,u^{\eps}(s))\ ds
        +\int_t^T\!\!\!\!J(s,u^{\eps}(s),v^{\eps}(s))\ d\overleftarrow{B}_s-\int_t^T\!\!\!\!v^{\eps}(s)\ dW_s.
      \end{split}
    \end{equation}
    Since $V=H=V'=\bR^n$, in view of $(\cA 6)$ we have
    $$J(t,\varphi,\phi)J^*(t,\varphi,\phi)\leq \phi\phi^*+C(K,\alpha_1)\left(
    \|J(t,0,0)\|_2^2+\|\varphi\|^2
    \right)I,\ \forall \,(\varphi,\phi)\in\bR^n\times\bR^{n\times m}.  $$
  Thus, by using \eqref{proof of thm finite eps-coercivity} and in a similar way to the proof of Lemma \ref{rmk thm finite dim estimate},
   we obtain
  \begin{equation}\label{proof of thm finite estimate eps-case}
    \begin{split}
      &\|u^{\eps}\|_{S^r(0,T;\bR^n)}+\|v^{\eps}\|_{M^{r,2}(0,T;\bR^{n\times m})}
      \\
      \leq&\
            C\big\{ \|\xi\|_{L^r(\Omega,\sF_T,\bR^n)}+\|F(\cdot,0,0)\|_{M^{r,1}(0,T;\bR^n)}
            +\|J(\cdot,0,0)\|_{M^{r,2}(0,T;\bR^{n\times l})}\\
      &\
            +\|\tilde{v}\|_{M^{r,1}(0,T;\bR^{n\times m})}    \big\}\\
      \leq&\
            C\big\{\|\xi\|_{L^r(\Omega,\sF_T,\bR^n)}+\|F(\cdot,0,0)\|_{M^{r,1}(0,T;\bR^n)}
            +\|J(\cdot,0,0)\|_{M^{r,2}(0,T;\bR^{n\times l})}\\
      &\
            +T^{1/2}\|\tilde{v}\|_{M^{r,2}(0,T;\bR^{n\times m})}
            \big\}
    \end{split}
  \end{equation}
  with the constant $C$ independent of $\eps$.

  On the other hand, we have
  \begin{equation}\label{proof of thm finite estimate1 eps-case}
    \begin{split}
      &E\left[\int_0^T\|F_{\eps}(t,u^{\eps}(t))\|^{q'}\ dt  \right]\\
      \leq &\ E\left[\int_0^T\|F(t,u^{\eps}(t),\tilde{v}(t))\|^{q'}\ dt  \right]\\
      \leq &\ E\left[\int_0^T\left(\varsigma(t)+K(\|u^{\eps}(t)\|_V^q+\|u^{\eps}(t)\|^2+\|\tilde{v}(t)\|_1^2)\right)
      \left(1+\|u^{\eps}(t)\|^{\beta}\right)\ dt   \right]\\
 \leq&\  C(K,T,\|\varsigma\|_{M^{p/2,1}(0,T)},\|\tilde{v}\|_{M^{r,2}(o,T;\bR^{n\times m})},\|u^{\eps}\|_{S^r(0,T;\bR^n)})
 \end{split}
  \end{equation}
  and
\begin{equation}\label{proof of thm finite estimate2 eps-case}
  \begin{split}
    & E\left[\int_0^T\|J(t,u^{\eps}(t),v^{\eps}(t))\|_2^2dt\right]\\
     \leq&\
        CE\left[\int_0^T(\|J(t,0,0)\|_2^2+\|u^{\eps}(t)\|^2
        +\|v^{\eps}(t)\|_1^2)dt\right]\\
     \leq&\
        C\left\{\|J(\cdot,0,0)\|^2_{M^{r,2}(0,T;\bR^{n\times l})}+\|v^{\eps}\|_{M^{r,2}(0,T;\bR^{n\times m})}^2+\|u^{\eps}\|^2_{S^r(0,T;\bR^n)} \right\}
  \end{split}
\end{equation}
Combining \eqref{proof of thm finite estimate eps-case},
 \eqref{proof of thm finite estimate1 eps-case} and \eqref{proof of thm finite estimate2 eps-case}, we conclude that there exists a sequence
 $\eps_k\downarrow 0$ and $(\bar{u},\bar{v},\bar{F},\bar{J})$ such that
\begin{equation}
  \begin{split}
    u^{\eps_k}&\longrightarrow\bar{u} \textrm{ weakly star in } L^{r}(\Omega,L^{\infty}(0,T;\bR^n));\\
    u^{\eps_k}&\longrightarrow\bar{u} \textrm{ weakly in } M^r(0,T;\bR^n);\\
    v^{\eps_k}&\longrightarrow\bar{v} \textrm{ weakly in } M^{r,2}(0,T;\bR^{n\times m});\\
    F_{\eps_k}(\cdot,u^{\eps_k}(\cdot))&\longrightarrow\bar{F} \textrm{ weakly in } M^{q'}(0,T;\bR^n);\\
    J(\cdot,u^{\eps_k}(\cdot),v^{\eps_k}(\cdot))&\longrightarrow\bar{J} \textrm{ weakly in } M^{2}(0,T;\bR^{n\times l})
  \end{split}
\end{equation}
as $k\rightarrow\infty$.

By Lemma \ref{lem in pre variation}, $(\bar{u},\bar{v})\in S^r(0,T;\bR^n)\times M^{r,2}(0,T;\bR^{n\times m})$
is the unique solution to \eqref{proof of lem finite dim bdsde1}.

Take $v^0=0$. We consider the following Picard iteration: for $k\in\bN$, let $(u^k,v^k)\in M^r(0,T;\bR^n)\times M^{r,2}(0,T;\bR^{n\times m})$ be the
unique solution of \eqref{proof of lem finite dim bdsde1} with $\tilde{v}$ replaced by $v^{k-1}$ there. Set $(X^k,Z^k):=(u^{k+1}-u^k,v^{k+1}-v^k)$. In view of the assumption $(\cA 2)$ (with $K_1=0$), we have
\begin{align}
    &E\left[\|X^k(t)\|^2+\int_t^T\|Z^k(s)\|_1^2\ ds   \right]\nonumber\\
    =&\ E\bigg[
        2\int_t^T\!\!\langle X^k(s),\ F(s,u^{k+1}(s),v^k(s))-F(s,u^k(s),v^{k-1}(s)\rangle \ ds\nonumber\\
     &\   +\int_t^T\!\!\!\|J(s,u^{k+1}(s),v^{k+1}(s))-J(s,u^k(s),v^k(s))\|_2^2\ ds
        \bigg]
        \nonumber\\
    =&\
        E\bigg[
        \int_t^T\!\!\!\big\{
            2\langle X^k(s),\ F(s,u^{k+1}(s),v^{k+1}(s))-F(s,u^k(s),v^{k}(s))\rangle \nonumber\\
    &\
        +\|J(s,u^{k+1}(s),v^{k+1}(s))-J(s,u^k(s),v^k(s))\|_2^2\big\}\ ds\nonumber\\
    &\
        +2\int_t^T\!\!\!\big\{
        \langle X^k(s),\ F(s,u^{k+1}(s),v^{k}(s))-F(s,u^{k+1}(s),v^{k+1}(s))\rangle \nonumber\\
    &\
        +\langle X^k(s),\ F(s,u^k(s),v^k(s))-F(s,u^{k},v^{k-1}(s))\rangle \big\}\ ds
        \bigg]\nonumber\\
    \leq
    &\
        E\bigg[
        \int_t^T\!\!\!\delta\|Z^k(s)\|_1^2\ ds
        +c_0\int_t^T\!\!\!\|X^k(s)\|^2\ ds+\frac{1-\delta}{2}\int_t^T\!\!\!\|Z^k(s)\|_1^2\ ds\nonumber\\
    &+\frac{1-\delta}{4}\int_t^T\!\!\!\|Z^{k-1}(s)\|_1^2\ ds
    \bigg]    \label{proof of lem finite dim bdsde11}
\end{align}
Thus, for $\mu :=2c_0/(1-\delta)$

\begin{align*}
    &-\frac{d}{dt}\left( e^{\mu  t}\int_t^T\!\!\! E[\|X^k(s)\|^2]\ ds \right)
        +e^{\mu  t}\int_t^T\!\!\!E[\|Z^k(s)\|_1^2]\ ds\\
    \leq&\
        \frac{e^{\mu  t}}{2}\int_t^T\!\!\! E[\|Z^{k-1}(s)\|_1^2]\ ds=:\frac{a_k(t)}{2}.
\end{align*}
Integrating both sides from $0$ to $T$, we get
$$ \int_0^T\!\!\!E[\|X^k(s)\|^2]\ ds+\int_0^T\!\!\!a_{k+1}(t)\ dt\leq \frac{1}{2}\int_0^T\!\!\!a_k(t)\ dt\leq\frac{1}{2^k}\int_0^T\!\!\!a_1(t)\ dt=:\frac{c_1}{2^k}.  $$
Then it follows from \eqref{proof of lem finite dim bdsde11} that
$$\int_0^T\!\!\!E[\|Z^k(s)\|_1^2]\ ds\leq \frac{c_1\mu }{2^k}
+\frac{1}{2}\int_0^T\!\!\!E[\|Z^{k-1}(s)\|_1^2]\ ds$$
which yields
$$\int_0^T\!\!\!E[\|Z^k(s)\|_1^2]ds\leq \frac{(k\mu+1)c_1}{2^k}.$$
Therefore, there exits a pair $(u,v)\in M^2(0,T;\bR^n)\times M^2(0,T;\bR^{n\times m})$ such that
\begin{equation}\label{eq_lim}
\lim_{k\rightarrow\infty}\|u-u^k\|_{M^2(0,T;\bR^n)}+\|v^k-v\|_{M^2(0,T;\bR^{n\times m})}=0.
\end{equation}
From \eqref{proof of lem finite dim bdsde11} and the above estimates we also have
\begin{equation}\label{proof of lem finite dim bdsde12}
    \sup_{k\in \bN}\sup_{t\in [0,T]} E[\|u^k(t)\|^2]<+\infty.
\end{equation}
On the other hand, similar to \eqref{proof of thm finite estimate eps-case} we have
\begin{equation}
  \begin{split}
    b_k&(t)\\
    :=&\ \|u^{k}\|_{S^r(t,T;\bR^n)}+\|v^k\|_{M^{r,2}(t,T;\bR^{n\times m})}\\
    \leq&\
        C\left\{\|G\|_{L^r(\Omega,\sF_T,\bR^n)}+ \|F(\cdot,0,v^{k-1})\|_{M^{r,1}(t,T;\bR^n)}
        +\|J(\cdot,0,0)\|_{M^{r,2}(t,T;\bR^{n\times m})}   \right\}\\
    \leq&\
        C_0\{\|G\|_{L^r(\Omega,\sF_T,\bR^n)}+ \|F(\cdot,0,\cdot)\|_{M^{r,1}(t,T;\bR^n)}
        +\|J(\cdot,0,0)\|_{M^{r,2}(t,T;\bR^{n\times m})}\\
    &\
        +(T-t)^{1/2}\|v^{k-1}\|_{M^{r,2}(t,T;\bR^{n\times m})}   \}\\
    \leq&\
        C_1+C_1(T-t)^{1/2}\|v^{k-1}\|_{M^{r,2}(t,T;\bR^{n\times m})},\ t\in [0,T)
  \end{split}
\end{equation}
where the constants $C,C_0$ and $C_1$ are all independent of $k$ and $T-t$.
Choose $\tau=T-\frac{1}{ 4|C_1|^2}\wedge T$.
Then for any $t\in [\tau,T]$, we have
$$
b_k(t)\leq C_1+\frac{1}{2}b_{k-1}(t),\ b_1(t) < \infty,
$$
which implies $\sup_{k\in \bN}\{ \|u^{k}\|_{S^r(t,T;\bR^n)}+\|v^k\|_{M^{r,2}(t,T;\bR^{n\times m})}  \}
 \leq 2C_1+b_1(t)<\infty$.

Hence we have $(u,v)\in L^{r}(\Omega,L^{\infty}([\tau,T],\bR^n))\times M^{r,2}(\tau,T;\bR^{n\times m})$. By induction, we conclude that
\begin{equation}\label{proof of lem finite dim bdsde13}
(u,v)\in L^{r}(\Omega,L^{\infty}([0,T],\bR^n))\times M^{r,2}(0,T;\bR^{n\times m}).
\end{equation}

We now show that $(\bar{u},\bar{v})$ admits a version which is a solution to Eq. \eqref{bdsdes-integral}. In fact, let $(\bar{u},\bar{v})$ solve the following equation:
$$ \bar{u}(t)=G+\int_t^T\!\!\! F(s,\bar{u}(s),v(s))\ ds+\int_t^T\!\!\!J(s,\bar{u}(s),\bar{v}(s))\ d\overleftarrow{B}_s-\int_t^T\!\!\!\bar{v}(s)\ dW_s.
$$
Similar to \eqref{proof of lem finite dim bdsde11}, it follows that
\begin{equation*}
  \begin{split}
    &E\left[\|\bar{u}(t)-u^k(t)\|^2\right]+(1-\delta)\int_t^TE\left[\|\bar{v}(s)-v^k(s)\|_1^2\right]\ ds\\
    \leq&\
        c_0\int_t^TE\left[\|\bar{u}(s)-u^k(s)\|^2\right]\ ds
        +\frac{1-\delta}{2}\int_t^TE\left[\|\bar{v}(s)-v(s)\|_1^2\right]\ ds\\
        &\ +\frac{1-\delta}{4}\int_t^TE\left[\|v^k(s)-v^{k-1}(s)\|_1^2\right]\ ds.
  \end{split}
\end{equation*}
In view of \eqref{eq_lim}, we have
\begin{align*}
  &\lim_{k\rightarrow \infty} \int_t^TE\left[\|v^k(s)-v^{k-1}(s)\|_1^2\right]\ ds
  =0\quad\textrm{and}\\
  &\lim_{k\rightarrow \infty} \int_t^TE\left[\|\bar{v}(s)-v^k(s)\|_1^2\right]\ ds
  =
  \int_t^TE\left[\|\bar{v}(s)-v(s)\|_1^2\right]\ ds.
\end{align*}
Taking $\eta(t):=\limsup_{k\rightarrow\infty}E[\|u^k(t)-\bar{u}(t)\|^2]$, by \eqref{proof of lem finite dim bdsde12} and Fatou's lemma, we obtain
\begin{align*}
  \eta(t) &\leq c_0\int_t^T\!\eta(s)\ ds
  -\lim_{k\rightarrow \infty}\int_t^TE\left[(1-\delta)\|\bar{v}(s)-v^k(s)\|_1^2
   -\frac{1-\delta}{2}  \|\bar{v}(s)-v(s)\|_1^2  \right]\ ds\\
   &+\frac{1-\delta}{4}\lim_{k\rightarrow \infty} \int_t^TE\left[\|v^k(s)-v^{k-1}(s)\|_1^2\right]\ ds
   \leq c_0\int_t^T\!\eta(s)\ ds
\end{align*}
which implies that $\eta\equiv 0$ by Gronwall's inequality. Furthermore, we conclude
 $$\lim_{k\rightarrow\infty}\int_t^TE[\|\bar{v}(s)-v^k(s)\|_1^2]\ ds=
 \int_t^TE[\|\bar{v}(s)-v(s)\|_1^2]\ ds=0.$$
It follows that $(\bar{u},\bar{v})$ is a modification of $(u,v)$. By Theorem \ref{thm ito formula}, Lemma \ref{lem uniqueness} and estimate \eqref{proof of lem finite dim bdsde13}, we conclude that
$(\bar{u},\bar{v})\in S^r(0,T;\bR^n)\times M^{r,2}(0,T;\bR^{n\times m})$ is the unique solution to BDSDES \eqref{bdsdes-integral}.

 \tbf{Step 3}. For any $N>0$, denote
 $$ F^N:=F-F(\cdot,0,0)1_{\{ \|F(\cdot,0,0)\|_{*}\geq N \}},J^N:=J 1_{\{ \|J(\cdot,0,0)\|_{2}\leq N \}},G^N:=G 1_{\{\|G\|\leq N\}}.$$
 Then in view of \tbf{Step 2}, there exists a unique solution $(u^N,v^N)\in (S^p(0,T;H)\cap M^{pq/2,q}(0,T;V))\times M^{p,2}(0,T;L(U_1,H))$ to the following BDSDES:
 \begin{equation}
   \begin{split}
     u^N(t)=&\ G^N+\int_t^T\!\!\!\!F^N(s,u^N(s),v^N(s))\ ds+\int_t^T\!\!\!\!J^N(s,u^N(s),v^N(s))\ d\overleftarrow{B}_s\\
     &-\int_t^T\!\!\!\!v^N(s)\ dW_s.
   \end{split}
 \end{equation}

 Since
 $$
 \langle \varphi,\ F(t,0,0)1_{\{ \|F(\cdot,0,0)\|_{*}\geq N \}} \rangle \ \leq\  \epsilon \|\varphi\|_V^q+C(\epsilon) \|F(t,0,0)\|_*^{q'},\ \forall \varphi\in V,
  $$
for any $(\varphi,\phi)\in V\times L(U_1,H)$,  we have
\begin{equation*}
  \begin{split}
    \langle \varphi, F^N(t,\varphi,\phi) \rangle +\|J^N(t,\varphi,\phi)\|_2^2
    \leq
    -(\alpha-\epsilon) \|\varphi\|_V^q+\delta\|\phi\|_1^2+ C(\epsilon)
    \|F(t,0,0)\|_*^{q'}+\varsigma(t)
  \end{split}
\end{equation*}
with the positive constant $\epsilon$ waiting to be determined later.
Then choosing $\epsilon$ to be so small that $ \alpha-(p-2)\alpha_1-\epsilon>0$,
we check that the pair $(F^N,J^N)$ satisfies all the assumptions given to the pair $(F,J)$
with $\varsigma$ replaced by $C(\epsilon)
    \|F(\cdot,0,0)\|_*^{q'}+\varsigma\leq \left(C(\epsilon)+1\right)\varsigma$.
By Lemma \ref{rmk thm finite dim estimate}, we have
\begin{equation}
  \begin{split}
    &\|u^N\|_{S^p(0,T;H)}+\|u^N\|^{q/2}_{M^{pq/2,q}(0,T;V)}+\|v^N\|_{M^{p,2}(0,T;L(U_1,H))}  \\
    \leq&\
     C\left\{ \|\varsigma\|_{M^{p/2,1}(0,T)}^{1/2}
    +\|G^N\|_{L^p(\Omega,\sF_T,H)}
      \right\}\\
    \leq&\
     C\left\{\|\varsigma\|_{M^{p/2,1}(0,T)}^{1/2}
     +\|G\|_{L^p(\Omega,\sF_T,H)}
           \right\}.
  \end{split}
\end{equation}
On the other hand, in view of $(\cA 4)$, we have
\begin{equation}
  \begin{split}
        &E\left[\int_0^T\|F^N(t,u^{N}(t),v^N(t))\|_{*}^{q'}\ dt  \right]\\
    \leq &\ C
        E\left[\int_0^T\!\!\!\!\left(\|F(t,u^N(t),v^N(t))\|_*^{q'}+\|F(t,0,0)\|^{q'}_*\right)\ dt  \right]\\
    \leq &\
        C\bigg\{
        E\left[\int_0^T\!\!\!\!\left(\varsigma(t)+K(\|u^{N}(t)\|_V^q+\|u^N(t)\|^2+\|v^N(t)\|_1^2)\right)
        \left(1+\|u^{N}(t)\|^{\beta}\right)\ dt   \right]
            \bigg\}\\
    \leq&\
        C
        \Big\{\|\varsigma\|_{M^{p/2,1}(0,T)}^{p/2}+\|v^N\|_{M^{p,2}(0,T;L(U_1,H))}^p
        +\|u^{N}\|^p_{S^p(0,T;H)}+\|u^N\|^{pq/2}_{M^{pq/2,q}(0,T;V)}
                \Big\}\\
  \end{split}
\end{equation}
  and
\begin{equation}
  \begin{split}
    & E\left[\int_0^T\|J^N(t,u^{N}(t),v^{N}(t))\|_2^2\ dt\right]\\
     \leq&\
        CE\left[\int_0^T\left(\varsigma(t)+\|u^{N}(t)\|^2+\|u^N(t)\|_V^q
        +\|v^{N}(t)\|_1^2\right)\ dt\right]\\
     \leq&\
        C\left\{\|\varsigma\|_{M^{p/2,1}(0,T)}+\|v^{N}\|_{M^{p,2}(0,T;L(U_1,H))}^2
        +\|u^{N}\|^2_{S^p(0,T;H)}+\|u^N\|_{M^{pq/2,q}(0,T;V)}^{q} \right\}.
  \end{split}
\end{equation}
Thus,  there exists a subsequence $N_k$ and $u,v,\bar{F},\bar{J}$ such that
\begin{equation}
  \begin{split}
    u^{N_k}&\longrightarrow u \textrm{ weakly star in } L^{p}(\Omega,L^{\infty}(0,T;H));\\
    u^{N_k}&\longrightarrow u \textrm{ weakly in } M^{pq/2,q}(0,T;V);\\
    v^{N_k}&\longrightarrow v \textrm{ weakly in } M^{p,2}(0,T;L(U_1,H));\\
    F^{N_k}(\cdot,u^{\eps_k}(\cdot))&\longrightarrow \bar{F} \textrm{ weakly in } M^{q'}(0,T;V');\\
    J(\cdot,u^{N_k}(\cdot),v^{N_k}(\cdot))&\longrightarrow \bar{J} \textrm{ weakly in } M^{2}(0,T;L(U_2,H))
  \end{split}
\end{equation}
as $k\rightarrow\infty$. It is clear that, for any $(\varphi,\phi)\in V\times L(U_1,H)$
$$\lim_{k\rightarrow\infty}\|F^{N_k}(\cdot,0,0)1_{\{\|F(\cdot,0,0)\|_*\geq N_k\}}\|_*+\|J^{N_k}(\cdot,\varphi,\phi)1_{\{\|J(\cdot,0,0)\|_2\geq N_k\}}\|_2=0,\ \mathbb{P}\otimes dt\textrm{-a.e.}.$$
  By Lemma \ref{lem in pre variation},
  $$(u,v)\in \left( S^p(0,T;H)\cap M^{pq/2,q}(0,T;V) \right)\times M^{p,2}(0,T;L(U_1,H))$$
    is the unique solution to \eqref{bdsdes} and we complete our proof.
\end{proof}

\subsection{Proof of  Theorem \ref{thm main}}

Let $\{e_i|\ i\in \mathbb{N} \}\subset V$ be an orthonormal basis of H and
let $H_n:=\textrm{span}\{e_1,\dots,e_n\}$ such that span$\{e_i|i\in
\mathbb{N}\}$ is dense in  $V$. Let $P_n:V'\rightarrow H_n$ be defined by
$$ P_n \phi:=\sum_{i=1}^n\langle \phi,\ e_i\rangle e_i,\quad \phi\in V'. $$
Obviously, $P_n|H$ is just the orthogonal projection onto $H_n$.
Let $\{g^i_{1},g^i_2,\dots\}$ be an orthogonal basis of $U_i$, $i=1,2$ and
$$W^n(t):=P^1_nW_t:=\sum_{i=1}^n\langle W_t,\ g^1_i\rangle _{U_1}g^1_i,
\ B^n(t):=P^2_nB_t:=\sum_{i=1}^n\langle B_t,\ g^2_i\rangle _{U_2}g^2_i.$$
Consider the collections of $\sigma$-algebras on $(\Omega,\sF,P)$ given by
$$ \sF^n_t=\sigma(W^n_s,0\leq s\leq t)\vee\sF_{t,T}^B.   $$
We put, by definition, for any $t\in [0,T]$
\begin{equation}
  \begin{split}
    &\tilde{F}^n(t,\phi,\varphi)=E\left[F(t,\phi,\varphi)|\sF_t^n   \right],
    F^n(t,\phi,\varphi)=P_n \tilde{F}^n(t,\phi,\varphi) \textrm{ and }  \\
    &\tilde{J}^n(t,\phi,\varphi)=E\left[J(t,\phi,\varphi)|\sF_t^n   \right],
    J^n(t,\phi,\varphi)=P_n \tilde{J}^n(t,\phi,\varphi),\ \phi\in V,\varphi\in L(U_1,H).
  \end{split}
\end{equation}
For
each $n\in \mathbb{N}$ we consider the following backward doubly stochastic
differential equation on $H_n:$
\begin{equation}\label{bdsdes finite dim sys}
  \begin{split}
    u^n(t)=&\
        G^n+\int_t^T\!\!\!\! F^n(s,u^n(s),v^n(s))\ ds
        +\int_t^T\!\!\!\!J^n(s,u^n(s),v^n(s))\ dB^n_s\\
        &\ -\int_t^T\!\!\!\!v^n(s)\ dW^n_si
  \end{split}
\end{equation}
with $G^n:=E\left[P_n G|\sF_T^n  \right]$.

\begin{proof}[Proof of Theorem \ref{thm main}]
    The uniqueness follows from Lemma \ref{lem uniqueness} and it remains to prove the existence and estimate \eqref{proof of thm main estimate 1}. First, for every $n\in \bN$ it can be checked that \eqref{bdsdes finite dim sys} satisfies all the conditions of Theorem \ref{thm finite dim} only with $\varsigma(t)$ replaced by $\varsigma^n(t):=E[\varsigma(t)|\sF^n_t]$.
  In view of Theorem \ref{thm finite dim} and Lemma \ref{rmk thm finite dim estimate}, there exists a unique solution $(u^n,v^n)\in (S^p(0,T;H)\cap M^{pq/2,q}(0,T;V))\times M^{p,2}(0,T;L(U_1,H))$ to \eqref{bdsdes finite dim sys} such that
  \begin{equation}\label{proof of thm main estimate 1}
    \begin{split}
      &\|u^n\|_{S^{p}(0,T;H)}+\|v^n\|_{M^{p,2}(0,T;L(U_1,H))}+\|u^n\|_{M^{pq/2,q}(0,T;V)}^{q/2}\\
      \leq&\
      C\left\{ \|\varsigma^n\|^{1/2}_{M^{p/2}(0,T)}+\|G^n\|_{L^p(\Omega,\sF_F,H)} \right\}
      \\
      \leq&\
      C\left\{ \|\varsigma\|^{1/2}_{M^{p/2}(0,T)}+\|G\|_{L^p(\Omega,\sF_F,H)} \right\}.
    \end{split}
  \end{equation}
  On the other hand,
\begin{equation*}
  \begin{split}
        &E\left[\int_0^T\left(\|F^n(t,u^{n}(t),v^n(t))\|_*^{q'}+\|\tilde{F}^n(t,u^{n}(t),v^n(t))\|_*^{q'}\right)\ dt  \right]\\
    \leq &\ C
        E\left[\int_0^T\|F(t,u^n(t),v^n(t))\|_*^{q'}\ dt  \right]\\
    \leq &\ C
        E\left[\int_0^T\left(\varsigma(t)+K(\|u^{n}(t)\|_V^q+\|u^n(t)\|^2+\|v^n(t)\|_1^2)\right)
        \left(1+\|u^{n}(t)\|^{\beta}\right)\ dt   \right]
            \\
    \leq&\ C
        \left\{\|\varsigma\|_{M^{p/2,1}(0,T)}^{p/2}+\|v^n\|_{M^{p,2}(0,T;L(U_1,H))}^p
        +\|u^{n}\|^p_{S^p(0,T;H)}+\|u^n\|^{pq/2}_{M^{pq/2,q}(0,T;V)}
        \right\}\\
  \end{split}
\end{equation*}
  and
\begin{equation*}
  \begin{split}
    & E\left[\int_0^T\left(\|J^n(t,u^{n}(t),v^{n}(t))\|_2^2+\|\tilde{J}^n(t,u^{n}(t),v^{n}(t))\|_2^2\right)\ dt\right]\\
     \leq&\
        CE\left[\int_0^T\left(\varsigma(t)+\|u^{n}(t)\|_V^q
        +\|u^n(s)\|^2+\|v^{n}(t)\|_1^2\right) \ dt\right]\\
     \leq&\ C
        \left\{\|\varsigma\|_{M^{p/2,1}(0,T)}+\|v^n\|_{M^{p,2}(0,T;L(U_1,H))}^2
        +\|u^{n}\|^2_{S^p(0,T;H)}+\|u^n\|^{q}_{M^{pq/2,q}(0,T;V)}
        \right\},\\
  \end{split}
\end{equation*}
where the constants $C$s are all independent of $n$.
  Thus, there exists a positive constant $C$ independent of $n$ such that
  \begin{equation}
    \begin{split}
      C\geq &\ \|u^{n}\|^p_{S^p(0,T;H)}+\|u^n\|^{pq/2}_{M^{pq/2,q}(0,T;V)}
      +\|v^n\|_{M^{p,2}(0,T;L(U_1,H))}^p\\
      &+\|J^n(t,u^{n}(t),v^{n}(t))\|^p_{M^{2}(0,T;L(U_2,H))}
      +\|F^n(t,u^{n}(t),v^{n}(t))\|^{q'}_{M^{q'}(0,T;V')}\\
      &+\|\tilde{J}^n(t,u^{n}(t),v^{n}(t))\|^p_{M^{2}(0,T;L(U_2,H))}
      +\|\tilde{F}^n(t,u^{n}(t),v^{n}(t))\|^{q'}_{M^{q'}(0,T;V')}
    \end{split}
  \end{equation}
  from which it follows that there exists a subsequence $n_k\rightarrow\infty$ and $(u,v,\bar{F},\bar{J},\tilde{F},\tilde{J})$ such that
  \begin{equation*}
    \begin{split}
    u^{n_k}&\longrightarrow u \textrm{ weakly star in } L^{p}(\Omega,L^{\infty}(0,T;H));\\
    u^{n_k}&\longrightarrow u \textrm{ weakly in } M^{pq/2,q}(0,T;V);\\
    v^{n_k}&\longrightarrow v \textrm{ weakly in } M^{p,2}(0,T;L(U_1,H));\\
    F^{n_k}(\cdot,u^{n_k}(\cdot),v^{n_k}(\cdot))&\longrightarrow \bar{F} \textrm{ weakly in } M^{q'}(0,T;V);\\
    J(\cdot,u^{n_k}(\cdot),v^{n_k}(\cdot))&\longrightarrow \bar{J} \textrm{ weakly in } M^{2}(0,T;L(U_2,H));\\
    \tilde{F}^{n_k}(\cdot,u^{n_k}(\cdot),v^{n_k}(\cdot))&\longrightarrow \tilde{F} \textrm{ weakly in } M^{q'}(0,T;V);\\
    \tilde{J}(\cdot,u^{n_k}(\cdot),v^{n_k}(\cdot))&\longrightarrow \tilde{J} \textrm{ weakly in } M^{2}(0,T;L(U_2,H)).
    \end{split}
  \end{equation*}
  Through a density argument, we check that $(\tilde{F},\tilde{J})\equiv (\bar{F},\bar{J})$.

  Through such a  calculation as
  \begin{equation*}
    \begin{split}
        &\lim_{n\rightarrow\infty}E\Big[\|G-E\left[P_n G|\sF_T^n\right]\|^p\Big]\\
    \leq&\
        2^{p-1}\lim_{n\rightarrow\infty}E\Big[\|G-E\left[G|\sF_T^n\right]\|^p
        +\|E\left[G-P_n G|\sF_T^n\right]\|^p\Big]\\
    \leq &\
        2^{p-1}\lim_{n\rightarrow\infty}E\Big[\|G-E\left[G|\sF_T^n\right]\|^p
        +\|G-P_n G\|^p\Big]=0,
    \end{split}
  \end{equation*}
  we obtain
  \begin{equation*}
    \begin{split}
      &G^{n_k}\longrightarrow G \textrm{ strongly in } L^p(\Omega,\sF_T,H)\\
      \textrm{and}&\textrm{ for }dt\textrm{-almost all }  t\in [0,T],\ \forall(\varphi,\xi)\in V\times L(U_1,H),\\
      &\lim_{n\rightarrow \infty}E[\|\tilde{F}^n(t,\varphi,\xi)-F(t,\varphi,\xi)\|_*^{q'}
        +\|\tilde{J}^n(t,\varphi,\xi)-J(t,\varphi,\xi)\|_2^2]=0.
    \end{split}
  \end{equation*}
  Then by Lemma \ref{lem in pre variation}, Remark \ref{rmk lem in pre variation} and \ref{rmk-lem-pre-variation}, $(u,v)$ is the unique solution of BDSDES \eqref{bdsdes}.
  Moreover, from \eqref{proof of thm main estimate 1} we deduce that estimate \eqref{main thm estimate} holds.
  We complete the proof.
\end{proof}

\section{Examples}
First,
let us consider the following quasi-linear BDSPDE:
\begin{equation}\label{1.1}
  \left\{\begin{array}{l}
  \begin{split}
  -du(t,x)=\,&\displaystyle \biggl[\partial_{x_j}\Bigl(a^{ij}(t,x)\partial_{x_i} u(t,x)
        +\sigma^{jr}(t,x) v^{r}(t,x)     \Bigr) +b^j(t,x)\partial_{x_j}u(t,x)\\
        &\displaystyle
         +c(t,x)u(t,x)+\varsigma^{r}(t,x)v^r(t,x)+g(t,x,u(t,x),\nabla u(t,x),v(t,x))\\
        &\displaystyle +\partial_{x_j}f^j(t,x,u(t,x),\nabla u(t,x),v(t,x))
                \biggr]\, dt-v^{r}(t,x)\, dW_{t}^{r}\\ &\displaystyle
           +h^l((t,x,u(t,x),\nabla u(t,x),v(t,x))\ d\overrightarrow{B}^l_t, \
                     (t,x)\in Q:=[0,T]\times \mathcal{O};\\
    u(T,x)=\,& G(x), \quad x\in\cO.
    \end{split}
  \end{array}\right.
\end{equation}
Here and in the following we use Einstein's summation convention, $T\in(0,\infty)$  is a fixed deterministic terminal time, $\cO\subset \bR^n$
is a domain with boundary $\partial \cO \in C^1$, $\nabla=(\partial_{x_1},\cdots,\partial_{x_n})$ is the gradient operator in $\bR^n$, and
$\left\{W_t:=(W^1_t, \cdots, W^m_t),  t\in [0,T]\right\}$ and $\left\{ B_t:=(B^1_t, \cdots, B^m_t), t\in [0,T]\right\}$ are two mutually independent $m$-dimensional standard Brownian motions. Note that domain $\cO$ can be chosen to be the whole space $\bR^n$.

To BDSPDE \eqref{1.1}, we give the following assumptions.

\bigskip\medskip
   $({\mathcal B} 1)$ \it The triple
\begin{equation*}
  (f,g,h)(\cdot,t,\cdot,\vartheta,y,z):~\Omega\times[0,T]\times\cO\rightarrow\bR^n\times\bR\times\bR^{l}
\end{equation*}
are $\sF_t\otimes\cB(\cO)$-measurable for any $(t,\vartheta,y,z)\in [0,T]\times\bR\times\bR^{n}\times\bR^{ m}$. There exist nonnegative constants $\delta\in (0,1),\kappa,\alpha,\beta$
and $L$
   such that for all $(\vartheta_1,y_1,z_1),(\vartheta_2,y_2,z_2)\in \bR\times\bR^n\times\bR^{n\times m}$
   and $(\omega,t,x)\in \Omega\times[0,T]\times\cO$,
   \begin{equation*}
     \begin{split}
       |f(\omega,t,x,\vartheta_1,y_1,z_1)-f(\omega,t,x,\vartheta_2,y_2,z_2)|\leq&\  L|\vartheta_1-\vartheta_2|+\frac{\kappa}{2}|y_1-y_2|+\beta^{1/2}|z_1-z_2|,\\
       |g(\omega,t,x,\vartheta_1,y_1,z_1)-g(\omega,t,x,\vartheta_2,y_2,z_2)|\leq&\  L(|\vartheta_1-\vartheta_2|+|y_1-y_2|+|z_1-z_2|),\\
       |h(\omega,t,x,\vartheta_1,y_1,z_1)-h(\omega,t,x,\vartheta_2,y_2,z_2)|\leq&\  L|\vartheta_1-\vartheta_2|+{\left(\alpha\right)^{1/2}}\!\!\!|y_1-y_2|
       +\left(\delta\right)^{1/2}\!\!\!\!\!|z_1-z_2|.
     \end{split}
   \end{equation*}\rm

\medskip
   $({\mathcal B}2)$ \it For each $t\in[0,T]$, the functions $a(t),\,\sigma(t),\, b(t),\,c(t),\,\varsigma(t)$ are $\sF_t\otimes\cB(\cO)$-measurable. There exist constants $\varrho,\varrho'> 1,$ and $\lambda,\Lambda> 0$ such that
   the following  hold for all $\xi\in\bR^n$ and $(\omega,t,x)\in \Omega\times[0,T]\times\cO$,
   \begin{equation*}
     \begin{split}
       &\lambda|\xi|^2\leq (2a^{ij}(\omega,t,x)-\varrho\sigma^{ir}\sigma^{jr}(\omega,t,x))\xi^i\xi^j\leq \Lambda|\xi|^2;\\
       &|a(\omega,t,x)|+|\sigma(\omega,t,x)|+|b(\omega,t,x)|+|c(\omega,t,x)|+|\varsigma(\omega,t,x)|\leq \Lambda;\\
       &\hbox{ \rm and  }\lambda-\kappa-\varrho'\beta-\alpha>0 \textrm{ \rm with }
       \frac{1}{\varrho}+\frac{1}{\varrho'}+\delta=1.
     \end{split}
   \end{equation*}\rm

\medskip
   $({\mathcal B} 3)$ \it $G\in L^{2}(\Omega,\sF_T,L^2(\cO))$,  and
   $h_0:=h(\cdot,\cdot,\cdot,0,0,0)\in M^2(0,T;L^2(\cO;\bR^m)),$
\begin{equation*}
    \begin{split}
    f_0:=f(\cdot,\cdot,\cdot,0,0,0)\in M^2(0,T;L^2(\cO;\bR^n)),\ g_0:=g(\cdot,\cdot,\cdot,0,0,0)\in M^2(0,T;L^2(\cO)).
    \end{split}
\end{equation*}

Let $C_c^{\infty}(\cO)$ denote the set of all infinitely differentiable real-valued functions on $\cO$ with compact support. For $p\in [1,\infty)$ and $\phi\in C_c^{\infty}(\cO)$, define
\begin{equation}
  \|u\|_{H^{1,p}_0(\cO)}:=\left( \int_{\cO} \left( |\phi(x)|^p+|\nabla\phi(x)|^p  \right)dx\right)^{1/p}.
\end{equation}
Then the Sobolev space  $H^{1,p}_0(\cO)$ is defined as the completion of $C_c^{\infty}(\cO)$ with respect to the norm $\|\cdot\|_{H^{1,p}_0(\cO)}$. As usual, we denote $H^{1,2}_0(\cO)$ and its dual space $H^{-1,2}(\cO)$ by $H^1_0(\cO)$ and $H^{-1}(\cO)$ respectively.
Here, Gelfand triple $(V,H,V')$ is realized as the triple $(H_0^1(\cO),L^2(\cO),H^{-1}(\cO))$. Hence, by Theorem \ref{thm main}, we have
\begin{prop}\label{prop}
  Let the assumptions $(\mathcal{B}1)$-$(\mathcal{B}3)$ hold. Then BDSPDE \eqref{1.1} admits a unique solution $(u,v)\in \left(M^2(0,T;L^2(\cO))\cap M^2(0,T;H_0^1(\cO))\right)\times M^2(0,T;L^2(\cO;\bR^m))$
  which satisfies
  \begin{equation*}
    \begin{split}
      &\|u\|_{S^2(0,T;L^2(\cO))}+\|u\|_{M^2(0,T;H_0^2(\cO))}+\|v\|_{M^2(0,T;L^2(\cO;\bR^m))}\\
      \leq&\
            C\left\{
            \|G\|_{L^2(\Omega,\sF_T,L^2(\cO))}
            +\|f_0\|_{M^2(0,T;L^2(\cO;\bR^n))}+\|g_0\|_{M^2(0,T;L^2(\cO))}
            +\|h_0\|_{M^2(0,T;L^2(\cO;\bR^m))}
            \right\}
    \end{split}
  \end{equation*}
  with the constant $C$ depending on $\lambda,\alpha,\beta,\delta,L,\kappa,\varrho,\Lambda,L$ and $T$.
\end{prop}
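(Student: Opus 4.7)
The plan is to recast BDSPDE \eqref{1.1} into the abstract framework of Theorem \ref{thm main} with the Gelfand triple $(V,H,V'):=(H_0^1(\cO),L^2(\cO),H^{-1}(\cO))$, spaces $U_1=U_2=\bR^m$ so that $L(U_i,H)\cong L^2(\cO;\bR^m)$, and parameters $p=q=2$, $\beta=0$. Set $J(t,u,v):=h(t,\cdot,u,\nabla u,v)$, and let $F(t,u,v)\in V'$ act via
\begin{align*}
  \langle F(t,u,v),\varphi\rangle := &-\!\int_\cO(a^{ij}\partial_{x_i}u+\sigma^{jr}v^r+f^j(t,\cdot,u,\nabla u,v))\partial_{x_j}\varphi\,dx\\
  &+\!\int_\cO\bigl(b^j\partial_{x_j}u+cu+\varsigma^rv^r+g(t,\cdot,u,\nabla u,v)\bigr)\varphi\,dx,\quad \varphi\in V.
\end{align*}
With $\varsigma(t):=C(\|f_0(t)\|_{L^2}^2+\|g_0(t)\|_{L^2}^2+\|h_0(t)\|_{L^2}^2)$, which lies in $M^1(0,T)$ by $(\mathcal{B}3)$, the proposition will follow once assumptions $(\mathcal{A}1)$--$(\mathcal{A}5)$ of Theorem \ref{thm main} are verified; note that $(\mathcal{A}6)$ is not needed since $p=2$.

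Hemicontinuity $(\mathcal{A}1)$, the Lipschitz bound $(\mathcal{A}5)$, and the quadratic growth $(\mathcal{A}4)$ (with $q'=2$) are routine: they follow from Cauchy--Schwarz applied to each term in $\langle F,\varphi\rangle$, the linearity of the principal part, the boundedness of $a,\sigma,b,c,\varsigma$ in $(\mathcal{B}2)$, and the linear/Lipschitz growth of $(f,g,h)$ in $(\mathcal{B}1)$. The final estimate in Proposition \ref{prop} will then follow directly from \eqref{main thm estimate} applied with $p=2$, since $\|\varsigma\|_{M^{1,1}(0,T)}^{1/2}\leq C(\|f_0\|_{M^2(0,T;L^2)}+\|g_0\|_{M^2(0,T;L^2)}+\|h_0\|_{M^2(0,T;L^2)})$.

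The core step, and the only nontrivial one, is the simultaneous verification of monotonicity $(\mathcal{A}2)$ and coercivity $(\mathcal{A}3)$; I describe coercivity, since monotonicity is obtained by the identical computation applied to differences $(u_1-u_2,v_1-v_2)$ using the Lipschitz bounds of $(\mathcal{B}1)$. Integration by parts yields
\begin{align*}
  2\langle F(t,u,v),u\rangle = -\!\int_\cO\!2a^{ij}\partial_{x_i}u\,\partial_{x_j}u\,dx-\!\int_\cO\!2\sigma^{jr}v^r\partial_{x_j}u\,dx-\!\int_\cO\!2f^j\partial_{x_j}u\,dx+R,
\end{align*}
where $R$ gathers the zero-order and $b,c,\varsigma$ terms, controlled via Young's inequality by $K\|u\|^2+\epsilon\|\nabla u\|_{L^2}^2+C_\epsilon(\|v\|_1^2+\varsigma(t))$ for arbitrarily small $\epsilon>0$. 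I will then combine four ingredients: (i) ellipticity $2a^{ij}-\varrho\sigma^{ir}\sigma^{jr}\geq\lambda I$ from $(\mathcal{B}2)$; (ii) Young's inequality $2|\sigma^{jr}v^r\partial_{x_j}u|\leq\varrho\sigma^{ir}\sigma^{jr}\partial_{x_i}u\partial_{x_j}u+\varrho^{-1}|v|^2$ which cancels exactly the $\varrho\sigma^{ir}\sigma^{jr}$ piece of (i); (iii) the $f$-bound $|f|\leq|f_0|+L|u|+(\kappa/2)|\nabla u|+\beta^{1/2}|v|$ from $(\mathcal{B}1)$, which upon applying Cauchy--Schwarz on $2|f||\nabla u|$ and Young with weight $\varrho'$ on the cross product $2\beta^{1/2}|v||\nabla u|$ contributes at most $(\kappa+\varrho'\beta+\epsilon)|\nabla u|^2+\varrho'^{-1}|v|^2$ plus lower order; and (iv) the $h$-estimate $\|J\|_2^2\leq(\alpha+\epsilon)\|\nabla u\|_{L^2}^2+(\delta+\epsilon)\|v\|_1^2+K\|u\|^2+C\|h_0\|_{L^2}^2$, obtained from the Lipschitz bound in $(\mathcal{B}1)$. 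Collecting everything,
\begin{align*}
  2\langle F(t,u,v),u\rangle+\|J(t,u,v)\|_2^2\leq -\bigl(\lambda-\kappa-\varrho'\beta-\alpha-c\epsilon\bigr)\|\nabla u\|_{L^2}^2+\bigl(\tfrac{1}{\varrho}+\tfrac{1}{\varrho'}+\delta+c\epsilon\bigr)\|v\|_1^2+K\|u\|^2+\varsigma(t).
\end{align*}
The relation $\frac{1}{\varrho}+\frac{1}{\varrho'}+\delta=1$ from $(\mathcal{B}2)$ allows the $\|v\|_1^2$ coefficient to be brought strictly below $1$ after choosing $\epsilon$ small, and the sign condition $\lambda-\kappa-\varrho'\beta-\alpha>0$ leaves a strictly positive coefficient $\alpha_*>0$ in front of $-\|\nabla u\|_{L^2}^2=-\|u\|_V^2+\|u\|_H^2$, yielding $(\mathcal{A}3)$ after absorbing $\|u\|^2$ into $K\|u\|^2$. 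With all hypotheses of Theorem \ref{thm main} verified, the existence, uniqueness and claimed estimate follow immediately.
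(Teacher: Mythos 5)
Your overall strategy --- realizing \eqref{1.1} in the Gelfand triple $(H_0^1(\cO),L^2(\cO),H^{-1}(\cO))$ with $p=q=2$, $\beta=0$, and invoking Theorem \ref{thm main} --- is exactly what the paper does; the paper in fact offers no more detail than this one-line reduction, so your explicit verification of $(\mathcal{A}1)$, $(\mathcal{A}4)$, $(\mathcal{A}5)$ and your identification of $\varsigma$ are welcome added content and are correct as far as they go.

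However, your verification of the coercivity $(\mathcal{A}3)$ (and hence of $(\mathcal{A}2)$, which you obtain by the same computation applied to differences) has a genuine gap at the collection step. You arrive at the coefficient $\tfrac{1}{\varrho}+\tfrac{1}{\varrho'}+\delta+c\eps$ in front of $\|v\|_1^2$ and assert that the identity $\tfrac{1}{\varrho}+\tfrac{1}{\varrho'}+\delta=1$ lets you bring it strictly below $1$ by shrinking $\eps$; but this coefficient equals $1+c\eps>1$ for every $\eps>0$, so no choice of $\eps$ helps. Since $(\mathcal{A}3)$ demands a $\|v\|_1^2$-coefficient that is a fixed constant strictly less than $1$, some positive slack must be manufactured, and the only available source is to enlarge the Young weights: replace $\varrho$ by $\varrho+\eps_1$ in your step (ii) (legitimate because $2a^{ij}-\varrho\sigma^{ir}\sigma^{jr}\geq\lambda I$ with $\lambda>0$ and $|\sigma|\leq\Lambda$, at the cost of lowering the effective ellipticity constant) and $\varrho'$ by $\varrho'+\eps_2$ in step (iii) (legitimate because $\lambda-\kappa-\varrho'\beta-\alpha>0$ is strict); this pushes $\tfrac1\varrho+\tfrac1{\varrho'}$ strictly below $1-\delta$, which is where the argument must actually close. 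Relatedly, your intermediate bound (iv), namely $\|J\|_2^2\leq(\alpha+\eps)\|\nabla u\|_{L^2}^2+(\delta+\eps)\|v\|_1^2+K\|u\|^2+C\|h_0\|_{L^2}^2$, does not follow from squaring the $(\mathcal{B}1)$ estimate: the cross term $2(\alpha\delta)^{1/2}|\nabla u|\,|v|$ forces coefficients of the form $(1+\mu)\alpha$ and $(1+\mu^{-1})\delta$, and both excesses cannot be made $O(\eps)$ simultaneously when $\alpha\delta>0$. You must either absorb this cross term into the slack just described (which only works if $\alpha\delta$ is small relative to that slack) or read the Lipschitz hypothesis on $h$ at the level of squares, $|h_1-h_2|^2\leq L|\vartheta_1-\vartheta_2|^2+\alpha|y_1-y_2|^2+\delta|z_1-z_2|^2$, which is how such conditions are usually intended. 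In fairness, the paper supplies no verification at all and glosses over the same borderline bookkeeping; but as written, your step (iv) and your final sentence about choosing $\eps$ small are not correct.
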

\begin{rmk}
  In Proposition \ref{prop}, if we assume further that $h\equiv 0$, $G$ is $\sF_T^W$-measurable, and
  for any $(\vartheta,y,z)\in \bR\times\bR^{n}\times\bR^{ m}$ $f(\cdot,\vartheta,y,z)$, $g(\cdot,\vartheta,y,z)$ and $h(\cdot,\vartheta,y,z)$ are all $\sF_t^W$-adapted processes. Then our BDSDES \eqref{1.1} degenerates into a BSPDE on which
  some behavior properties of the solutions, on basis of Proposition \ref{prop}, have been obtained by Qiu and Tang \cite{QiuTangMPBSPDE11}
   under a more general framework.
\end{rmk}
\begin{rmk}
  In view of the whole proof of our main theorem \ref{thm main}, we deal in fact with a much more general class of BDSPDEs. Precisely, we solve the following BDSPDE:
  \begin{equation}
    \begin{split}
      u(t,x)
      =
      &\
        G(x)+\int_t^T\!\! \cL u(s,x)+\langle\delta(s,x), Dv(s,x)\rangle+f(s,x,u(s,x),D u(s,x),v(s,x)) \ ds
      \\
      &\
        +\int_t^T\!\! h^r(s,x,u(s,x),D u(s,x),v(s,x)) \ d\overrightarrow{B}^r_s
        -\int_t^T\!\! v^r(s,x)\ dW^r_s
    \end{split}
  \end{equation}
  where $\esssup_{s\in[0,T],x\in\mathcal{Q}}\|\delta(x)\|\leq c_0<1$ and $\cL$ is a non-positive self-adjoint sub-Markovian operator associated with a symmetric Dirichlet form defined on some space $L^2(\mathcal {Q},m(dx))$ and which admits a gradient $D$. One particular case of the previous example lies in the case where $\mathcal{Q}=\cO$, $m(dx)=dx$, $Du(x)=\nabla u(x)\sigma(x)$
  and
  $$\cL u(t,x)=\partial_{x_j}\left(a_{ij}(x)\partial_{x_i} u(t,x) \right)$$
  with $a=\sigma\sigma^*\geq 0$ which is not necessary to be uniformly positive definite as $(\mathcal{B}2)$ and is allowed to be degenerate. We refer to \cite{BouleauHirschDirichletform94,FukushimaDirichletform94,Ma_Rockner_Dirichlet} for a detailed exposition and references to the theory of Dirichlet forms. We also refer to \cite{DenisDirichlet2004,Denis2004} for a counterpart on the SPDE theory.
\end{rmk}

It is worthy noting that our BDSDESs like \eqref{bdsdes} include as particular cases  the forward stochastic differential evolutionary systems listed in \cite[Chapter 4, Page 55--91]{PrevotRockner2007}.
Consider the following BDSDES:
\begin{equation}
  \begin{split}
    u(t)=
    &
        \ G+\int_t^T\!\! A(s,u(s))+\delta_1 v(s)+f(s)\ ds\\
    &
        \ +\int_t^T\!\! g(s)+\delta_2 v(s)+A_1(s,u(s)) \ d\overrightarrow{B}_s
        -\int_t^T\!\! v(s)\ dW_s
  \end{split}
\end{equation}
with $W$ and $B$ being one-dimensional Wiener processes and
$(\delta_1,\delta_2)\in\bR\times(0,1)$. Let $A_1(t,u)$ be Lipschitzian continuous with respect to $u$ on $H$.
Then $A(t,u)$ can be chosen to be any one listed in \cite[Chapter 4, Page 59--73]{PrevotRockner2007} with corresponding Gelfand triple. For example,

$\tbf{(a)}$. $A(u):=-u|u|^{r-2}$ with $(V,H,V'):=(L^r(\cO),L^2(\cO),L^{r/(r-1)}(\cO))$ and $r\in[2,\infty)$;

$\tbf{(b)}$. $A(u):=\textrm{div}\left(|\nabla u|^{r-2}\nabla u\right)$ with $(V,H,V'):=(H^{1,r}_0(\cO),L^2(\cO),(H^{1,r}_0(\cO))')$ and $r\in[2,\infty)$.
Then the corresponding existence and uniqueness propositions  are implied by Theorem \ref{thm main}.

\section{Appendix}
As in \cite{Krylov_Rozovskii81} and
\cite{PrevotRockner2007,RenRocknerWang2007}, to prove Theorem \ref{thm ito
formula} we need the following lemma. For abbreviation below we set
$$\mathbb{X}:=L^q(\Omega\times[0,T],\sF\otimes\cB([0,T]),\mathbb{P}\otimes dt;V).$$
\begin{lem}\label{lem ito formula}
    Let $u\in \mathbb{X}$. Then there exists a sequence of partitions
    $I_l:=\{0=t_0^l<t_1^l <\cdots<t^l_{k_l}=T\}$ such that $I_l\subset
    I_{l+1}$ and $\pi(I_l):=\max_i(t_i^l-t^l_{i-1})\rightarrow 0$, $u(t_i^l)\in
    V$ $P$-a.e. for all $l\in \bN, 1\leq i\leq k_l-1$, and for
    $$
    \bar{u}^l:=\sum_{i=2}^{k_l} 1_{[t_{i-1}^l,t_i^l[} u(t_{i-1}^l),
    ~~\tilde{u}^l:=\sum_{i=1}^{k_l-1}1_{[t_{i-1}^l,t_i^l[} u(t_{i}^l),
    ~~l\in \bN,
    $$
    we have $\bar{u}^l$ and $\tilde{u}^l$ belong to $\mathbb{X}$ such that
    $$\lim_{l\rightarrow\infty}\{ \|u-\bar{u}^l\|_{\mathbb{X}}+\|u-\tilde{u}^l\|_{\mathbb{X}} \}=0.$$
\end{lem}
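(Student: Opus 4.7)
I would identify $\mathbb{X}$ via Fubini for Bochner integrals (available because $V$ is separable and reflexive) with $L^q([0,T]; L^q(\Omega,\sF,\bP;V))$, and then invoke the Lebesgue differentiation theorem for Banach-valued $L^q$ functions. Setting $F(t):=u(\cdot,t)\in L^q(\Omega;V)$ for a.e.\ $t$, the set
$$
D:=\Big\{\,t\in(0,T):\,\lim_{h\downarrow 0}\tfrac{1}{2h}\int_{t-h}^{t+h}\|F(s)-F(t)\|_{L^q(\Omega;V)}^q\,ds=0\,\Big\}
$$
has full Lebesgue measure in $[0,T]$. In particular at every $t\in D$ one has $\|F(t)\|_{L^q(\Omega;V)}<\infty$, so $u(t)\in V$ holds $\bP$-a.s.

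\textbf{Construction of the partitions.} I would pick a countable dense subset $\{s_1,s_2,\ldots\}\subset D$ and let $I_l$ be the partition whose nodes are $\{0,T\}\cup\{s_1,\ldots,s_l\}$ arranged in increasing order. Then by construction $I_l\subset I_{l+1}$, $\pi(I_l)\to 0$ by density of $\{s_i\}$ in $[0,T]$, and every interior node lies in $D$, so $u(t_i^l)\in V$ holds $\bP$-a.s.\ for $1\le i\le k_l-1$. Membership of $\bar{u}^l$ and $\tilde{u}^l$ in $\mathbb{X}$ follows since each summand is essentially bounded in $\omega$ after restricting to the full-measure event where $u(t_i^l)\in V$.

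\textbf{Convergence.} For $\bar u^l$, rewrite
$$
\|u-\bar u^l\|_{\mathbb{X}}^q=\sum_{i=2}^{k_l}\int_{t_{i-1}^l}^{t_i^l}\|F(s)-F(t_{i-1}^l)\|_{L^q(\Omega;V)}^q\,ds.
$$
A standard three-$\varepsilon$ reduction then closes the argument: continuous functions $\phi:[0,T]\to L^q(\Omega;V)$ are dense in $L^q([0,T];L^q(\Omega;V))$. For a continuous $\phi$, uniform continuity of $\phi$ on $[0,T]$ yields $\sup_{|s-t|\le \pi(I_l)}\|\phi(s)-\phi(t)\|_{L^q(\Omega;V)}\to 0$, whence the analogous sum for $\phi$ tends to zero. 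For general $F$, choose $\phi$ with $\|F-\phi\|<\varepsilon$, decompose each integrand as $F(s)-F(t_{i-1}^l)=(F-\phi)(s)+(\phi(s)-\phi(t_{i-1}^l))+(\phi-F)(t_{i-1}^l)$, and control the third term on average using the Lebesgue-point property at the nodes $t_{i-1}^l\in D$. The argument for $\tilde u^l$ is identical after reading the nodes as right endpoints rather than left.

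\textbf{Main obstacle.} The delicate point is reconciling three simultaneous demands on the partition nodes: they must be Lebesgue points (so that $u(t_i^l)$ is meaningful as an element of $V$), the families must nest, and the mesh must vanish. Since $D$ has full Lebesgue measure it contains a countable dense subset of $[0,T]$, and the enumeration above aligns all three requirements at once. Beyond this bookkeeping, the only analytic input is the Lebesgue differentiation theorem in $L^q(\Omega;V)$, which is valid because this space is separable.
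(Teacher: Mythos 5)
The paper itself omits the proof of this lemma, referring to \cite[Lemma 4.2.6]{PrevotRockner2007} and \cite[Lemma 4.1]{RenRocknerWang2007}, so I compare your argument with that standard proof. Your construction has a genuine gap at the convergence step: choosing the interior nodes to be an arbitrary countable dense subset of the Lebesgue-point set $D$ does \emph{not} force $\|u-\bar u^l\|_{\mathbb{X}}\to 0$. Concretely, take $\Omega$ trivial and $F=1_A\,v_0$ with $v_0\in V$ fixed and $A\subset[0,T]$ a fat Cantor set (compact, nowhere dense, $|A|>0$). Every point of the open dense set $[0,T]\setminus A$ is a Lebesgue point of $F$ at which $F$ vanishes, so you may legitimately pick your dense set $\{s_j\}\subset([0,T]\setminus A)\subset D$; then $u(t_{i-1}^l)=0$ for every interior node, hence $\bar u^l\equiv 0$ for all $l$ and $\|u-\bar u^l\|_{\mathbb{X}}^q=|A|\,\|v_0\|_V^q>0$ for every $l$. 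The same defect shows up analytically in your three-$\varepsilon$ argument: the third term $\sum_i(t_i^l-t_{i-1}^l)\|(F-\phi)(t_{i-1}^l)\|^q$ cannot be ``controlled on average by the Lebesgue-point property,'' because bounding $\|(F-\phi)(t_{i-1}^l)\|^q$ through the local average at $t_{i-1}^l$ reintroduces exactly the quantity $\int_{t_{i-1}^l}^{t_i^l}\|F(s)-F(t_{i-1}^l)\|^q\,ds$ you are trying to estimate (the argument is circular), and the Lebesgue-point convergence is pointwise in the node with no uniformity over the $k_l\to\infty$ nodes. Your closing remark that the only analytic input is the Lebesgue differentiation theorem is therefore exactly where the proof breaks: the heart of the lemma is that the partitions must be \emph{selected adaptively from $u$}, not merely taken dense among Lebesgue points.

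The standard proof (Krylov--Rozovskii, Pr\'ev\^ot--R\"ockner) gets this selection by an averaging argument over translated grids: extend $u$ by zero outside $[0,T]$, consider the dyadic partitions of mesh $h_l=2^{-l}T$ shifted by a parameter $c$, and integrate the error $\|u-\bar u^{l,c}\|_{\mathbb{X}}^q$ over $c\in[0,h_l]$. Fubini turns this average into an expression of the form $h_l^{-1}\int_0^{h_l}\int_0^T\|u(s+r)-u(s)\|_{\mathbb{X}}^q\,ds\,dr$ (plus the analogous left-shift term for $\tilde u^l$), which tends to zero by continuity of translations in $L^q$. Chebyshev/Fatou then yields a single shift $c$, common to all dyadic levels along a subsequence, for which both errors tend to zero; using one common shift with dyadic refinement is what produces the nesting $I_l\subset I_{l+1}$ and simultaneously guarantees $u(t_i^l)\in V$ a.s.\ at the chosen nodes. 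You would need to replace your ``pick any dense set of Lebesgue points'' step by this (or an equivalent) selection mechanism; minor further issues in your write-up --- the omitted first interval $[0,t_1^l[$ on which $\bar u^l=0$, and the claim that the summands are ``essentially bounded in $\omega$'' (they are merely in $L^q(\Omega;V)$, which suffices) --- are cosmetic by comparison.
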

Since the proof of Lemma \ref{lem ito formula} is standard (c.f.
\cite[Lemma 4.2.6]{PrevotRockner2007} or \cite[Lemma 4.1]{RenRocknerWang2007}), we omit it here.
\begin{proof}[Proof of Theorem \ref{thm ito formula}]
    \tbf{Step 1}.
    Obviously, Equation \eqref{BDSES
    trivialcase} holds on $V'$, i.e. \eqref{BDSES trivialcase} holds with both sides being $V'$-valued
     processes. Denote
    $$\tilde{W}_t:=\int_0^t v(r)\ dW_r \textrm{ and } \tilde{B}_t:=\int_0^t h(r)\ d\overrightarrow{B}_r,~~t\in [0,T].$$
    Then $\tilde{W}$ and $\tilde{B}$ are continuous $H$-valued processes. Since $f\in \mathbb{X}':= M^{q'}(0,T;V')$, both $\int_0^tf(r)dr$ and $u$ are
    continuous $V'$-valued processes.
    Through careful computations, we have
    \begin{equation}\label{proof of ito formula indentity}
      \begin{split}
        \|u(s)\|^2=
        &\|u(t)\|^2-\|\tilde{W}_t-\tilde{W}_s\|^2+\|\tilde{B}_t-\tilde{B}_s\|^2
        +2\int_s^t\!\langle u(s),\ f(r)\rangle \ dr\\
        &-2\int_s^t\!\!\langle u(s),\ v(r)\ dW_r\rangle +2\int_s^t\!\!\langle u(t),\ h(r)\ d\overrightarrow{B}_r\rangle \\
        &-\|u(t)-u(s)-\tilde{W}_t+\tilde{W}_s+\tilde{B}_t-\tilde{B}_s\|^2\\
        &-2\langle u(t)-u(s)-\tilde{W}_t+\tilde{W}_s+\tilde{B}_t-\tilde{B}_s,
                    \tilde{W}_t-\tilde{W}_s\rangle \\
      \end{split}
    \end{equation}
    holds for all $t,s\in [0,T]$ such that $t>s$ and $u(t),u(s)\in V$.
    For any $t=t_i^l\in I_l\backslash \{ 0,T\}$ given in Lemma \ref{lem ito
    formula}, we have
    \begin{align}
        &\|u(t)\|^2-\|\xi\|^2\nonumber\\
        =&
            \sum_{j=i+1}^{k_l}(\|u(t_{j-1}^l)\|^2-\|u(t_{j}^l)\|^2)\nonumber\\
        =&
            2\int_t^T\!\!\langle f(r),\ \bar{u}^l(r)\rangle \ dr
            -2\int_t^T\!\!\langle \bar{u}^l(r),\ v(r)dW_r\rangle
            +2\int_t^T\!\!\langle \tilde{u}^l(r),\ h(r)\ d\overrightarrow{B}_r\rangle \nonumber\\
        &
            +2\langle \xi,\ \int_{t_{k_l-1}^l}^T \!\!h(r)\ d\overrightarrow{B}_r\rangle
            +\sum_{j=i+1}^{k_l}\left(\|\tilde{B}_{t_{j}^l}-\tilde{B}_{t_{j-1}^l}\|^2
                             -\|\tilde{W}_{t_j^l}-\tilde{B}_{t_{j-1}^l}\|^2\right)\nonumber\\
        &+\sum_{j=i+1}^{k_l}\Big(-
        2\langle u(t_{j}^l)-u(t_{j-1}^l)-\tilde{W}_{t_{j}^l}+\tilde{W}_{t_{j-1}^l}
        +\tilde{B}_{t_{j}^l}-\tilde{B}_{t_{j-1}^l},\
                    \tilde{W}_{t_{j}^l}-\tilde{W}_{t_{j-1}^l}\rangle \nonumber\\
        &\quad\quad\quad
         \textrm{   }-\|u(t_{j}^l)-u(t_{j-1}^l)-\tilde{W}_{t_{j}^l}+\tilde{W}_{t_{j-1}^l}
        +\tilde{B}_{t_{j}^l}-\tilde{B}_{t_{j-1}^l}\|^2\Big).
    \end{align}
    It can be checked that all the integral above are well defined.
    By Lemma \ref{lem ito formula}, there holds
    \begin{equation}\label{proof of ito formula 1}
        E\left[\int_0^T\!\!|\langle \bar{u}^l(s),\ f(s)\rangle |\ ds\right]
        \leq\|f\|_{\mathbb{X'}}\|\bar{u}^l\|_{\mathbb{X}} < c_1,
    \end{equation}
    where the constant $c_1>0$ is independent of $l$. By BDG
    inequality, we have
    \begin{equation}\label{proof of ito formula 2BDG}
      \begin{split}
        &
            E\left[ \sup_{t\in [0,T]}\left| \int_t^T\!\!\langle \bar{u}^l(s),\ v(s)\ dW_s\rangle
            -\int_t^T\!\!\langle \tilde{u}^l(s),\ h(s)\ d\overleftarrow{B}_s\rangle  \right|  \right]\\
        \leq&\
            2E\left[ \sup_{t\in [0,T]}\left|
            \int_0^t\!\!\langle \bar{u}^l(s),\ v(s)\ dW_s\rangle \right|\right]
            +E\left[\sup_{t\in[0,T]}\left|\int_t^T\!\!\langle \tilde{u}^l(s),\ h(s)\ d\overleftarrow{B}_s\rangle  \right|
            \right]\\
        \leq&\
            C E\left[\left|\int_0^T\left( \|v(s)^*\bar{u}^l(s)\|_{U_1}^2
            +\|h(s)^*\tilde{u}^l(s)\|^2_{U_2}\right)\ ds\right|^{1/2}  \right]\\
        \leq&\ C
            E\left[\left|\int_0^T\left( \|v(s)\|_1^2\|\bar{u}^l(s)\|^2
            +\|h(s)\|_2^2\|\tilde{u}^l(s)\|^2\right)\ ds\right|^{1/2}  \right]\\
        \leq&\
            \frac{1}{4}E\left[\sup_{1\leq j\leq k_l}\|u(t_{j}^l)\|^2\right]
            +CE\left[\int_0^T\left(\| h(s) \|_2^2+\|v(s)\|_1^2\right)\ ds
        \right],
      \end{split}
    \end{equation}
    with $C$ being a generic constant independent of $l$.
    On the other hand, we have
    \begin{align}
        &
            \sum_{j=i+1}^{k_l}\Big(
               -2\langle u(t_{j}^l)-u(t_{j-1}^l)-\tilde{W}_{t_{j}^l}
                +\tilde{W}_{t_{j-1}^l}
                +\tilde{B}_{t_{j}^l}-\tilde{B}_{t_{j-1}^l},\
                \tilde{W}_{t_{j}^l}-\tilde{W}_{t_{j-1}^l}\rangle \nonumber\\
        &\ \ \ \ \ \
            -\|u(t_{j}^l)-u(t_{j-1}^l)-\tilde{W}_{t_{j}^l}
            +\tilde{W}_{t_{j-1}^l}
            +\tilde{B}_{t_{j}^l}-\tilde{B}_{t_{j-1}^l}\|^2\Big)\nonumber\\
        \leq&\
            \sum_{j=i+1}^{k_l}\Big[
            \|u(t_{j}^l)-u(t_{j-1}^l)-\tilde{W}_{t_{j}^l}
            +\tilde{W}_{t_{j-1}^l}
            +\tilde{B}_{t_{j}^l}
            -\tilde{B}_{t_{j-1}^l}\|^2
            +\|\tilde{W}_{t_{j}^l}-\tilde{W}_{t_{j-1}^l}\|^2\nonumber\\
        &\ \ \ \ \ \
            -\|u(t_{j}^l)-u(t_{j-1}^l)-\tilde{W}_{t_{j}^l}
            +\tilde{W}_{t_{j-1}^l}
            +\tilde{B}_{t_{j}^l}
            -\tilde{B}_{t_{j-1}^l}\|^2      \Big]\nonumber\\
        =&\
            \sum_{j=i+1}^{k_l}\left(
            \|\tilde{W}_{t_{j}^l}
            -\tilde{W}_{t_{j-1}^l}\|^2
            \right)                              \nonumber\\
        =&\
        \int_{t_i^l}^T\!\|v(s)\|_1^2\ ds, \label{proof of ito formula 3}\\
\nonumber\\
        &
            E\left[ \sum_{j=i+1}^{k_l}\left(
            -\|\tilde{W}_{t_j^l}
            -\tilde{W}_{t_{j-1}^l}\|^2
            +\|\tilde{B}_{t_j^l}-\tilde{B}_{t_{j-1}^l}\|^2\right) \right]\nonumber\\
        =&\
            \sum_{j=i+1}^{k_l} E \left[ \int_{t_{j-1}^l}^{t_j^l}
            \left(-\|v(s)\|_1^2+\|h(s)\|_2^2\right)\ ds \right]\nonumber\\
        =&\
            E\left[\int_{t_i^l}^T\left(-\|v(s)\|_1^2+\|h(s)\|_2^2 \right)\ ds
            \right]    \label{proof of ito formula 5}
    \end{align}
    and
    \begin{equation}\label{proof of ito formula 4}
        E\left[\left|(\xi,\ \int_{t_{k_l-1}^l}^T\!\! h(s)\ dW_s )\right|\right]
        \leq \left(E\left[\|\xi\|^2\right]\right)^{1/2}\left(
        E\left[\int_{t_{k_l-1}^l}^T\!\!
        \|h(s)\|_2^2\ ds\right]\right)^{1/2}.
    \end{equation}

    Hence, in view of
    \eqref{proof of ito formula 1}-\eqref{proof of ito formula
    4}, we obtain
    $$E\left[\sup_{t\in I_l\backslash\{0\}}\|u(t)\|^2\right]\leq c_2<\infty$$
    for some constant $c_2>0$ independent of $l$. Therefore, setting $I:=\cup_{l\geq 1}I_l\backslash\{0\}$,
    with $I_l$ as in Lemma \ref{lem ito formula}, we have
    $$E\left[\sup_{t\in I}\|u(t)\|^2\right]\leq c_2,$$
    since $I_l\subset I_{l+1}$ for all $l\in \bN$. Almost surely,
    $$ \sum_{j=1}^N|\langle u(\omega,t),e_j\rangle |^2\  \uparrow \|u(\omega,t)\|^2\textrm{ as }  N\uparrow \infty, \ \forall t\in[0,T]$$
    with $\{ e_j\big|j\in\bN\}\subset V$ being  an orthonormal basis of $H$. For any
     $x\in V'\backslash H$, set $\|x\|=\infty$ as usual. Then, we conclude
    that $t\rightarrow \|u(t)\|$ is lower semicontinuous almost surely.
    Since $I$ is dense in $[0,T]$, we arrive at $\sup_{t\in[0,T]}\|u(t)\|^2
    =\sup_{t\in I}\|u(t)\|^2$. Hence, we have
    \begin{equation}\label{proof of ito formula sup norm}
          E\left[\sup_{t\in[0,T]}\|u(t)\|^2 \right]<\infty.
    \end{equation}

    \tbf{Step 2}. We prove the following approximating result:
    \begin{equation}\label{proof of ito formula claim a}
        \begin{split}
            \lim_{l\rightarrow \infty}\sup_{t\in [0,T]}\left|
            \int_t^T\!\langle u(s)-\bar{u}^l(s),\ v(s)\ dW_s\rangle
            \right|=0\textrm{ in probability,}\\
            \lim_{l\rightarrow \infty}\sup_{t\in [0,T]}\left|
            \int_t^T\!\langle u(s)-\tilde{u}^l(s),\ h(s)\ d\overleftarrow{B}_s\rangle
            \right|=0\textrm{ in probability.}
        \end{split}
    \end{equation}

    As to \eqref{proof of ito formula claim a}, it is sufficient to prove the first equality, since the
    second follows similarly. As $u$ is a continuous $V'$-valued process, we conclude
    from \eqref{proof of ito formula sup
    norm}  that $u$ is weakly continuous in $H$. It follows that
    $P_n u$ is continuous in $H$ and thus
    $$ \lim_{l\rightarrow\infty}\int_0^T\|P_n (u(s)-\bar{u}^l(s))\|^2\|v(s)\|_1^2ds=0,~a.s.,
     $$
     where $P_n$ is the orthogonal projection onto span$\{e_1,\dots,e_n\}$
     in $H$. It remains to prove that for each $\eps>0$,
     \begin{equation}\label{proof of ito formula claim a1}
       \begin{split}
	       &\lim_{n\rightarrow \infty}\sup_{l\in \bN} \mathbb{P}\left(
         \sup_{t\in[0,T]}\left|
              \int_t^T \!\langle (1-P_n)\bar{u}^l(s),\ v(s)\ dW_s\rangle
                    \right|> \eps   \right)=0,\\
		    &\lim_{n\rightarrow \infty}\mathbb{P} \left(
         \sup_{t\in[0,T]}\left|
              \int_t^T\!\! \langle (1-P_n)u(s),\ v(s)\ dW_s\rangle
                    \right|>\eps   \right)=0.
       \end{split}
     \end{equation}
    For each $n\in \bN$, $\gamma\in (0,1)$ and $N>1$,
     we have
    \begin{align*}
        &
	\mathbb{P}\left(\sup_{t\in[0,T]}\left| \int_t^T\!\!\langle (1-P_n)\bar{u}^l(s),\ v(s)dW_s
            \rangle  \right|> \eps  \right)                                                \\
      \leq&\
      \mathbb{P}\left(\sup_{t\in[0,T]}\left| \int_0^t\!\!\langle (1-P_n)\bar{u}^l(s),\ v(s)dW_s
            \rangle  \right|> \frac{\eps }{2} \right)                                     \\
      =&\
      \mathbb{P}\left(\sup_{t\in[0,T]}\left| \int_0^t\!\!\langle (1-P_n)\bar{u}^l(s),\ v(s)dW_s
            \rangle  \right|> \frac{\eps }{2},
            \int_0^T\|\bar{u}^l(s)\|^2d\langle(1-P_n)\tilde{W}\rangle_s\leq \gamma^2\right)            \\
      &\
      +\mathbb{P}\left(\sup_{t\in[0,T]}\left| \int_0^t\!\!\langle (1-P_n)\bar{u}^l(s),\ v(s)dW_s
            \rangle  \right|> \frac{\eps }{2},
            \int_0^T\!\!\!\|\bar{u}^l(s)\|^2\ d\langle(1-P_n)\tilde{W}\rangle_s> \gamma^2\right)                \\
      \leq&\
      \mathbb{P}\left(\sup_{t\in[0,T]}\left| \int_0^t\!\!\langle (1-P_n)\bar{u}^l(s),\ v(s)dW_s
            \rangle  \right|> \frac{\eps }{2},
            \int_0^T\|\bar{u}^l(s)\|^2d\langle(1-P_n)\tilde{W}\rangle_s\leq \gamma^2\right)            \\
      &\
      +\mathbb{P}\left(\left|
            \int_0^T\!\!\!\|\bar{u}^l(s)\|^2\ d\langle(1-P_n)\tilde{W}\rangle_s\right|  > \gamma^2\right)\\
      \leq&\
            \frac{2C}{\eps}E\left[\left|\int_0^T\!\!\!\|\bar{u}^l(s)\|^2\ d\langle(1-P_n)\tilde{W}\rangle_s\right|^{1/2}
            \wedge\gamma\right]
	    +\mathbb{P}\left(\left|
            \int_0^T\!\!\!\|\bar{u}^l(s)\|^2\ d\langle(1-P_n)\tilde{W}\rangle_s\right|  > \gamma^2\right)\\
       \leq&
            \frac{2C\gamma}{\eps}
	    +\mathbb{P}\left(\sup_{t\in[0,T]}\|u(t)\|>N  \right)
            +\frac{N^2}{\gamma^2}E\left[\langle(1-P_n)\tilde{W}\rangle_T\right],
    \end{align*}
    where $C$ is a constant from BDG inequality and
     $\langle(1-P_n)\tilde{W}\rangle_t:=\int_0^t\|(1-P_n)v(s)\|_1^2ds$.
     By letting $n\rightarrow \infty$, then $N\rightarrow \infty$ and
     finally $\gamma\rightarrow 0$, we complete the proof of the first equality of \eqref{proof of ito formula claim a1}. The second equality of \eqref{proof of ito formula claim a1} follows similarly.

    \tbf{Step 3}.
       We prove \eqref{Ito formula square} holds for $t\in I$.

    For this $t\in I$ fixed, we may assume that $t\neq T$. In this case,
    there exists a $N\in \bN$ such that for any $l\geq N$, there exists a
    unique $0<i<k_l$ satisfying $t=t_i^l$. In view of \eqref{proof of ito formula 5}, \eqref{proof of ito formula 4}, \eqref{proof of ito formula claim
    a} and Lemma \ref{lem ito formula}, taking limits in probability, we have
    \begin{equation}
      \begin{split}
        &\|u(t)\|^2-\|\xi\|^2\\
        =&2\int_t^T\!\!\langle f(s),\ u(s)\rangle ds+\int_t^T\!\!\langle u(s),\ h(s)\ d\overleftarrow{B}_s\rangle -\int_t^T\!\!\langle u(s),\ v(s)\ dW_s\rangle \\
        &+\int_t^T\!\!\!\|h(s)\|_2^2\ ds-\int_t^T\!\!\!\|v(s)\|_1^2\ ds+\gamma_0-\gamma_1,
      \end{split}
    \end{equation}
    where
    \begin{equation*}
        \begin{split}
		\gamma_0:=\mathbb{P}\textrm{-}\lim_{l\rightarrow\infty}
        \sum_{j=i+1}^{k_l}
     &-2\langle u(t_{j}^l)-u(t_{j-1}^l)-\tilde{W}_{t_{j}^l}
        +\tilde{W}_{t_{j-1}^l}
        +\tilde{B}_{t_{j}^l}
        -\tilde{B}_{t_{j-1}^l},\
                    \tilde{W}_{t_{j}^l}
                    -\tilde{W}_{t_{j-1}^l}\!\rangle , \\
		    \gamma_1:=\mathbb{P}\textrm{-}\lim_{l\rightarrow\infty}
        \sum_{j=i+1}^{k_l}
     &
        \|u(t_{j}^l)-u(t_{j-1}^l)-\tilde{W}_{t_{j}^l}
        +\tilde{W}_{t_{j-1}^l}
        +\tilde{B}_{t_{j}^l}
        -\tilde{B}_{t_{j-1}^l}\|^2
        \end{split}
    \end{equation*}
    exist and $\mathbb{P}\textrm{-}\lim$ denotes the limit in probability. Therefore, it
    remains to show that $\gamma_0=\gamma_1=0$. In a similar way to the definition of $\tilde{u}^l$ and $\bar{u}^l$, we define
    $\tilde{\tilde{W}}^l$,$\bar{\tilde{W}}^l$,$\tilde{\tilde{B}}^l$
    and $\bar{\tilde{B}}^l$.
  For each $n\in\bN$, we have
    \begin{equation*}
      \begin{split}
	      \gamma_1=\ \mathbb{P}\textrm{-}\lim_{l\rightarrow \infty}\Big(
        &\sum_{j=i+1}^{k_l}\| u(t_{j}^l)-u(t_{j-1}^l)-\tilde{W}_{t_{j}^l}
            +\tilde{W}_{t_{j-1}^l}
            +\tilde{B}_{t_{j}^l}
            -\tilde{B}_{t_{j-1}^l}\|^2\\
	    =\ \mathbb{P}\textrm{-}\lim_{l\rightarrow \infty}\bigg(
        &
            \int_t^T\!\!-\Big\langle f(s),\ \tilde{u}^l(s)-\bar{u}^l(s)
            +P_n\left(\bar{\tilde{W}}^l_s-\tilde{\tilde{W}}^l_s
            +\tilde{\tilde{B}}^l_s-\bar{\tilde{B}}^l_s\right)\Big\rangle ds         \\
        &
	 +
            \left\langle \xi-u(t_{k_l-1}^l)-\tilde{W}_{t_{k_l}^l}
            +\tilde{W}_{t_{k_l-1}^l}
            +\tilde{B}_{t_{k_l}^l}
	    -\tilde{B}_{t_{k_l-1}^l},\ \xi+P_n(\tilde{B}_T-\tilde{W}_T))\right\rangle\\
        &
            +\sum_{j=i+1}^{k_l}
            \Big\langle u(t_{j}^l)-u(t_{j-1}^l)-\tilde{W}_{t_{j}^l}
            +\tilde{W}_{t_{j-1}^l}
            +\tilde{B}_{t_{j}^l}
            -\tilde{B}_{t_{j-1}^l},\ \\
            &~~\quad
            (1-P_n)\left(-\tilde{W}_{t_{j}^l}
            +\tilde{W}_{t_{j-1}^l}
            +\tilde{B}_{t_{j}^l}
            -\tilde{B}_{t_{j-1}^l}\right)\Big\rangle
            \bigg)                                                              \\
	    =\ \mathbb{P}\textrm{-}\lim_{l\rightarrow \infty}\Big(
        &A_1+A_2+A_3\Big).
      \end{split}
    \end{equation*}
    From Lemma \ref{lem ito formula} it follows that
    $\mathbb{P}\textrm{-}\lim_{l\rightarrow\infty}\left(\int_t^T(f(s),\tilde{u}^l(s)-\bar{u}^l(s))ds\right)=0$.
     Since $u$ is weakly continuous in $H$,
     we have $\mathbb{P}-\lim_{l\rightarrow\infty}A_2=0$. Moreover, as $P_n\tilde{W}$
    and $P_n\tilde{B}$ are continuous processes in $V$,
    $$\textrm{P-}\lim_{l\rightarrow\infty}\left(\int_t^T\left\langle f(s),\
    P_n(\bar{\tilde{W}}^l-\tilde{\tilde{W}}^l(s)+
    \tilde{\tilde{B}}^l-\bar{\tilde{B}}^l(s))\right\rangle\ ds\right)=0.$$
    Thus, we have
    \begin{equation}
      \begin{split}
        \gamma_1\leq\
        &
	\mathbb{P}\textrm{-}\lim_{l\rightarrow\infty}\left(
            \sum_{j=i+1}^{k_l}\left\|u(t_{j}^l)-u(t_{j-1}^l)
            -\tilde{W}_{t_{j}^l}+\tilde{W}_{t_{j-1}^l}
            +\tilde{B}_{t_{j}^l}
            -\tilde{B}_{t_{j-1}^l}\right\|^2  \right)^{1/2}             \\
        &
            \cdot\left(\sum_{j=i+1}^{k_l}
            \left\|(1-P_n)\left(\tilde{W}_{t_{j-1}^l}
            -\tilde{W}_{t_{j}^l}
            +\tilde{B}_{t_{j}^l}
            -\tilde{B}_{t_{j-1}^l}   \right)\right\|^2
                \right)^{1/2}                                                    \\
        =\ &
        \gamma_1^{1/2}\left\langle (1-P_n)\left(-\tilde{W}+\tilde{B}\right)\right\rangle_T^{1/2}.
      \end{split}
    \end{equation}
    By Lebesgue's dominated convergence theorem, we have
    \begin{equation*}
      \begin{split}
        &
            \lim_{n\rightarrow \infty}E\left[ \left\langle (1-P_n)(\tilde{W}+\tilde{B})\right\rangle_T \right]\\
        =&\
            \lim_{n\rightarrow \infty}E\left[
            \int_0^T\left(\|(1-P_n)h(s)\|_2^2+\|(1-P_n)v(s)\|_1^2\right)\,ds \right]
            =0.
      \end{split}
    \end{equation*}
    Hence, $\gamma_1=0$.

    Similarly,
    \begin{equation*}
	    \begin{split}
	    \gamma_0=\ \mathbb{P}\textrm{-}\lim_{l\rightarrow \infty}
        &\sum_{j=i+1}^{k_l}-2\left\langle u(t_{j}^l)-u(t_{j-1}^l)-\tilde{W}_{t_{j}^l}
            +\tilde{W}_{t_{j-1}^l}
            +\tilde{B}_{t_{j}^l}
	    -\tilde{B}_{t_{j-1}^l},\ \tilde{W}_{t^l_{j}}-\tilde{W}_{t^l_{j-1}}
	    \right\rangle\\
	    =\ 2\mathbb{P}\textrm{-}\lim_{l\rightarrow \infty}
        &\bigg(
            \int_t^T\!\!\left\langle f(s),+P_n\left(\tilde{\tilde{W}}^l_s-\bar{\tilde{W}}^l_s\right)\right\rangle\ ds         \\
        &
	 +
            \left\langle \xi-u(t_{k_l-1}^l)-\tilde{W}_{t_{k_l}^l}
            +\tilde{W}_{t_{k_l-1}^l}
            +\tilde{B}_{t_{k_l}^l}
	    -\tilde{B}_{t_{k_l-1}^l},\ P_n\left(\tilde{W}_T\right)\right\rangle\\
        &
            +\sum_{j=i+1}^{k_l}
            \Big\langle u(t_{j}^l)-u(t_{j-1}^l)-\tilde{W}_{t_{j}^l}
            +\tilde{W}_{t_{j-1}^l}
            +\tilde{B}_{t_{j}^l}
            -\tilde{B}_{t_{j-1}^l},\ \\
            &~~\quad
            (1-P_n)\left(\tilde{W}_{t_{j}^l}
            -\tilde{W}_{t_{j-1}^l}
            \right)\Big\rangle  \bigg)                                                              \\
	    =\ 2\mathbb{P}\textrm{-}\lim_{l\rightarrow \infty}
	    &\bigg(
            \sum_{j=i+1}^{k_l}
            \Big\langle u(t_{j}^l)-u(t_{j-1}^l)-\tilde{W}_{t_{j}^l}
            +\tilde{W}_{t_{j-1}^l}
            +\tilde{B}_{t_{j}^l}
            -\tilde{B}_{t_{j-1}^l},\ \\
            &~~\quad
            (1-P_n)\left(\tilde{W}_{t_{j}^l}
            -\tilde{W}_{t_{j-1}^l}
            \right)\Big\rangle  \bigg)        \\
	    \leq\
        2\mathbb{P}\textrm{-}\lim_{l\rightarrow \infty}
	    &\left(
	   \sum_{j=i+1}^{k_l}
	    \left\|u(t_{j}^l)-u(t_{j-1}^l)
            -\tilde{W}_{t_{j}^l}+\tilde{W}_{t_{j-1}^l}
	    +\tilde{B}_{t_j^l}-\tilde{B}_{t_{j-1}}\right\|^2
	    \right)^{1/2}             \\
        &
            \cdot\left(\sum_{j=i+1}^{k_l}
            \left\|(1-P_n)\left(\tilde{W}_{t_{j}^l}
            -\tilde{W}_{t_{j-1}^l}
               \right)\right\|^2
                \right)^{1/2}                                                    \\
	\leq
	2\gamma_1^{1/2}\Big\langle (1-
    &P_n)\left(\tilde{W}\right) \Big\rangle_T^{1/2}.
	\end{split}
    \end{equation*}
    from which we deduce that $\gamma_0=\gamma_1=0$.

    \tbf{Step 4}.
       we prove \eqref{Ito formula square} holds for all $t\in [0,T]\backslash I$.

    In view of \tbf{Step 2}, there exists $\Omega'\in \sF$ with
    probability 1 such that both the limits in \eqref{proof of ito formula claim
    a} are point-wise ones in $\Omega'$ for some subsequence (denoted again by
     $l\rightarrow\infty$) and \eqref{Ito formula square} holds for all $t\in
     I$ on $\Omega'$. Fix $t\in [0,T]\backslash I$. In this case, for any
     $l\in \bN$ there exists a unique $j(l)>0$ such that $t\in [t_{j(l)-1}^l,t_{j(l)}^l
     [$. Letting $t(l):=t_{j(l)}^l$, we have $t(l)\downarrow t$ as $l\uparrow
     \infty$. By \tbf{Step 3}, for any $l>m$ we have
     \begin{equation}\label{proof of ito formula claim iiia}
       \begin{split}
         &\|u(t(l))-u(t(m))\|^2\\
        =&\
            2\int_{t(l)}^{t(m)}\!\!\langle f(s),\ u(s)-u(t(m))\rangle ds
            +2\int_{t(l)}^{t(m)}\!\!\langle u(s)-u(t(m)),\ h(s)d\overleftarrow{B}_s\rangle \\
         &\
            -2\int_{t(l)}^{t(m)}\!\!\langle u(s)-u(t(l)),\ v(s)dW_s\rangle
            +\langle\tilde{W}\rangle_{t(l)}-\langle\tilde{W}\rangle_{t(m)}
          \\
          &~~~~~~~-2\langle u(t(l))-u(t(m)),\ \int_{t(l)}^{t(m)}\ v(s)\ dW_s\rangle -\langle\tilde{B}\rangle_{t(l)}+\langle\tilde{B}\rangle_{t(m)}
       \end{split}
     \end{equation}
    By Lemma \ref{lem ito formula}, selecting another subsequence if
    necessary, we conclude for some $\Omega''\subset \Omega'$ with probability 1 such that
    $$\lim_{m\rightarrow \infty}\int_0^T\!\!\left|\langle f(s),\ u(s)-\tilde{u}^m(s)\rangle \right|\ ds=0.$$
    Since
    \begin{equation*}
      \begin{split}
        \sup_{l>m}\int_{t(l)}^{t(m)}\!\!\left|\langle f(s),\ u(s)-\tilde{u}^m(s)\rangle \right|\ ds
        \leq \int_0^T\!\!\left|\langle f(s),\ u(s)-\tilde{u}^m(s)\rangle \right|\ ds,
      \end{split}
    \end{equation*}
    there holds that
    $$\lim_{m\rightarrow \infty}\sup_{l>m}
    \int_{t(l)}^{t(m)}\!\!\left| \langle f(s),\ u(s)-\tilde{u}^m(s)\rangle  \right|\ ds=0$$
    on $\Omega''$.
    Moreover, as
    $$2\sup_{l>m}\left|\int_{t(l)}^{t(m)}\!\langle u(s)-u(t(m)),\ h(s)\ d\overleftarrow{B}_s\rangle \right|
    \leq
        4\sup_{t\in[0,T]}\!\!\left| \int_t^T\!\langle u(s)-\tilde{u}^m(s),\ h(s)\ d\overleftarrow{B}_s\rangle    \right|,$$
    in view of \eqref{proof of ito formula sup norm} and
      \eqref{proof of ito formula claim a} (holding pointwis on $\Omega'$) and
    by the continuity of $\langle\tilde{W}\rangle_s$, $\langle\tilde{B}\rangle_s$ and $\tilde{W}_s$,
    we conclude that
    $$\lim_{m\rightarrow \infty}\sup_{l\geq m}\|u(t(l))-u(t(m))\|^2=0$$
    holds on $\Omega''$.
    Therefore, $(u(t(l)))_{l\in\bN}$ is a Cauchy sequence in $H$ on
    $\Omega''$. As $u$ is a continuous $V'$-valued process,
    $\lim_{l\rightarrow \infty}\|u(t(l))-u(t)\|=0$ on $\Omega''$.
    Since \eqref{Ito formula square} holds for $t(l)$ on $\Omega''$,
    letting $l\rightarrow \infty$, we get \eqref{Ito formula square}
    for all $t\notin I$ on $\Omega''$.

    \tbf{Step 5}. We complete our proof by proving that
    $u\in S^2(0,T;H) $.

    From the continuity of the right-hand side
    of \eqref{Ito formula square} on $\Omega''$, it follows that the map $t\mapsto \|u(t)\|
    $ is continuous on $[0,T]$. This together with \eqref{proof of ito formula sup norm}
    and the weak continuity of
    $u(t)$ in $H$ implies $u\in S^2(0,T;H)$.
\end{proof}

\bibliographystyle{siam}

\end{document}